\documentclass{svproc}
\usepackage{amsmath}
\usepackage{graphicx}
\usepackage{amsfonts,amssymb}
\usepackage{hyperref}
\hypersetup{
    colorlinks=true,        % ссылки будут цветными (а не в рамке)
    linkcolor=blue,         % цвет ссылок на утверждения
    citecolor=blue,         % цвет ссылок на библиографию
    urlcolor=blue,          % цвет ссылок на веб-адреса
    pdfborder={0 0 0},      % убирает рамки вокруг ссылок (на всякий случай)
    linktocpage=true,       % в оглавлении ссылкой становится номер страницы
}

\usepackage{url}

\def\Cl{\operatorname{Cl}}
\def\Conv{\operatorname{Conv}}
\def\B{\operatorname{B}}
\def\mpn{\operatorname{mpn}}
\def\smt{\operatorname{smt}}
\def\Li{\operatorname{Li}}
\def\Ls{\operatorname{Ls}}
\def\Lim{\operatorname{Lim}}
\def\Im{\operatorname{Im}}
\def\Ker{\operatorname{Ker}}

\begin{document}
\mainmatter     
\title{On the Existence of Minimal Parametric Networks}
\titlerunning{On the Existence of Minimal Parametric Networks}  
\author{Arsen Galstyan}
\institute{
School of Mathematics, Harbin Institute of Technology, Harbin, Heilongjiang 150001, China,\\
\email{ares.1995@mail.ru}.
}

\maketitle              % typeset the title of the contribution

\begin{abstract}

The paper continues the investigation of conditions for the existence of minimal networks in metric spaces. We introduce the notion of a \textit{compactly equipped} pseudometric space, in which every closed ball is equipped with a compact topology with respect to which the original pseudometric is lower semicontinuous. It is proved that these conditions are sufficient for the existence of minimal parametric networks of any type. This generalizes the corresponding theorems of Ivanov, Tropin and Tuzhilin on the existence of such networks in proper metric spaces. Moreover, this allows us to give both a more general formulation and a simpler proof of Bednov's theorem on sufficient conditions for the existence of minimal parametric networks in Banach spaces. We also show which hyperspaces and under what conditions inherit this compactly equipped property.

\keywords{Steiner problem, shortest networks, hyperspaces, Hausdorff distance}
\end{abstract}

\section*{Introduction}

The problem of finding minimal (that is, shortest in the sense of length) networks connecting a given finite set of points (the \textit{boundary}) of a metric space is a classical problem in the calculus of variations and geometric optimization with numerous applications in science and engineering (see, e.g.,~\cite{Hwang},~\cite{Review},~\cite{Cheng}). Its most well-known formulation is the Steiner problem, which asks for a shortest connected network (metric graph) containing the boundary, with the possibility of adding extra vertices; these additional vertices are called \textit{interior} vertices.

Searching for such a minimal network can be extremely difficult because of the so-called ``combinatorial explosion'' --- the very rapidly growing (with the number of boundary points) number of possible ways to interconnect the original and added points, i.e., the number of tree structures that can connect the given boundary.

In order to reduce the combinatorial complexity, other formulations of the minimal connection problem are considered. One possibility is to fix the tree structure --- to make it a parameter (the so-called problem of finding a minimal parametric network; see, e.g.,~\cite{IT},~\cite{Branching},~\cite{Borodin},~\cite{Tropin}). The simplest structure in this case is a star tree, whose single interior vertex $y$ is connected to each boundary point. The length of such a tree is obviously the sum of the distances from the interior vertex to all boundary points.

From the very first formulation by Pierre Fermat of the problem of connecting the vertices of a triangle up to the works of the late 20th century, the Steiner problem was considered exclusively in finite-dimensional normed spaces (see, e.g.,~\cite{Hwang},~\cite{Du},~\cite{Bern},~\cite{Cieslik}). In such spaces, the existence of minimal trees and parametric networks is guaranteed by the classical results of~\cite{Hwang}.

It should be noted that guaranteeing the existence and actually finding the shortest network are different tasks. Melzak~\cite{Melzak} presented a computational geometry algorithm for constructing a minimal network for any finite set of points in the Euclidean plane. Zhitnaya, in recent works~\cite{Zhitnaya_1},~\cite{Zhitnaya_2} and~\cite{Zhitnaya_3}, showed that the problem of finding a shortest (including parametric) network in finite-dimensional spaces with $l_1$ and $l_2$ metrics reduces to the problem of finding the equilibrium position of some mechanical system.

In the work~\cite{IT}, Ivanov and Tuzhilin proved the existence of a minimal parametric tree (with an arbitrary fixed parameter) for any finite boundary on a connected complete Riemannian manifold. As noted by Tropin in his work~\cite{Tropin}, this result easily generalizes to the case of an arbitrary proper metric space. Also, Ivanov, Tropin and Tuzhilin~\cite{ITT} proved that the hyperspace of all nonempty compact subsets of a proper metric space is also a proper metric space, and therefore every finite boundary there is joinable by a minimal parametric network (note that the existence of minimal parametric networks of arbitrary fixed type in a metric space implies the existence of shortest networks).

However, in more general metric and even in Banach spaces the situation becomes more complicated: minimal (and even parametric) networks for finite boundaries may fail to exist, as shown by Garkavi~\cite{Garkavi}, Vesel\'{y}~\cite{Vesely}, Borodin~\cite{Borodin} and Strelkova~\cite{Bednov}. Furthermore, the completeness of a metric space is irrelevant for the question of the existence; see Proposition~\ref{prop:noncomplete}. Nevertheless, Bednov~\cite{Bednov} proved that if a Banach space admits a norm-one projector from its bidual, then every finite set of points can be joined by a minimal parametric network of arbitrary fixed type in that space.

On the other hand, Ambrosio and Tilli~\cite{Ambrosio}, building on ideas of Gromov~\cite{Gromov}, proved for any compact subset of a metric space the existence of a connected set of minimal one-dimensional Hausdorff measure containing it, under the following assumptions: it is sufficient that each closed ball admits a compact metrizable topology with respect to which the original metric is lower semicontinuous. However, their approach does not in general yield the structure of a finite graph, and in principle solves a slightly different problem of continuously (connectedly) joining a set of points by a set of minimal measure. In our problem, we do not require that the vertices of a minimal network can be continuously connected at all. The length of a network in our case is simply the sum of the lengths of its edges, where the length of an edge is the distance between its endpoints. A shortest network is accordingly a network of minimum possible length in the given space.

In the present paper, we continue the investigation of the existence of minimal parametric networks in various spaces. We prove (Theorem~\ref{thm:Ambrosio}) that the conditions of Ambrosio and Tilli (even without requiring the topology on the ball to be metrizable) are sufficient for the existence of minimal parametric networks of any type. A space satisfying such conditions has been named ``compactly equipped''. Note that a proper metric space is a special case of such a space.

Theorem~\ref{thm:Ambrosio} automatically allowed us to give a more general formulation and also a simpler proof of Bednov's result~\cite{Bednov}. Indeed, the idea of the proof of Theorem~1 in~\cite{Bednov} is to find a minimal parametric network in the bidual and then project it onto the original space. But in any dual space (in particular, in the bidual), the weak$^*$ topology is compact on every closed ball, and the metric is lower semicontinuous in this topology. Thus, in Bednov's theorem one can simply speak of a norm-one projector from a compactly equipped Banach space onto the original Banach space (Theorem~\ref{thm:gen}). It should be noted that whether the existence of such a projector implies a norm-one projector from the bidual remains an open question.

The other results of this paper concern the theory of minimal parametric networks in hyperspaces. We note that the theory of hyperspaces, which originated at the beginning of the 20th century thanks to the fundamental works of Hausdorff and Vietoris, has grown into an extensive field closely connected with topology, functional analysis and geometric measure theory (see, e.g.,~\cite{Nadler},~\cite{Blackburn}). Kelley's famous 1942 paper~\cite{Kelley} generated sustained interest in hyperspaces, and subsequent research showed (see, e.g., the book~\cite{SetAn}) that such spaces provide a natural framework for the study of set-valued analysis and variational problems. Note that in recent years, Ivanov, Tuzhilin, Tropin and Galstyan wrote a series of papers investigating minimal parametric networks in hyperspaces (see Definition~\ref{dfn:hyperspace}) of compact subsets of finite-dimensional normed spaces (see \cite{Tropin}, \cite{ITT}, \cite{Tropin_2}, \cite{Gals_1}, \cite{Gals_2}).

In this paper, we prove that if the original pseudometric space is compactly equipped and the corresponding compact topology $\tau$ on a closed ball $B$ is Hausdorff, then the hyperspace of nonempty $\tau$-closed subsets of the ball $B$ is also compactly equipped (Theorem~\ref{thm:hyperspace}). Consequently, in such hyperspaces, minimal parametric networks of any type exist for any finite boundary.

From Theorem~\ref{thm:hyperspace}, it follows that the hyperspace of nonempty weak$^*$-closed and bounded subsets of a dual space is also compactly equipped (Theorem~\ref{thm:hyp_tau_comp}), and therefore minimal networks exist there as well.

Furthermore, by Mazur's theorem~\ref{thm:mazur}, in a normed space the closure of any convex set coincides with its closure in the weak topology, which in the reflexive case coincides with the weak$^*$ topology on that space. It follows that the hyperspace of nonempty closed bounded convex subsets of a reflexive space is also compactly equipped (Theorem~\ref{thm:conv_hyperspace}). Hence, by Theorem~\ref{thm:Ambrosio}, for any finite boundary in such a hyperspace, a minimal parametric network of any type exists.

Thus, the main results of this paper are Theorem~\ref{thm:Ambrosio}, Theorem~\ref{thm:gen}, Theorem~\ref{thm:hyperspace}, Theorem~\ref{thm:hyp_tau_comp}, and Theorem~\ref{thm:conv_hyperspace}. These theorems demonstrate that the approach based on compactly equipped spaces is a natural tool for proving the existence of minimal networks in various spaces. It allowed us to unify and extend previously known existence theorems.

The paper is organized as follows. In Section~\ref{Definitions}, we collect all the necessary definitions and auxiliary statements concerning $r$-neighborhoods (Subsection~\ref{sec:Neighborhoods}), the Hausdorff distance (Subsection~\ref{sec:Haus}), hyperspaces (Subsection~\ref{sec:hyperspace}), the Vietoris topology (Subsection~\ref{sec:Vietoris}), weak topologies (Subsection~\ref{sec:weak_top}), reflexive spaces (Subsection~\ref{sec:ref_space}), lower semicontinuous functions (Subsection~\ref{sec:semicont}), complemented spaces (Subsection~\ref{sec:complemented}) and also we rigorously introduce the notions of a minimal parametric network (Definition~\ref{dfn:mpn}) and a minimal Steiner tree (Definition~\ref{dfn:mst}). In Section~\ref{Main}, we present the main existence results for shortest networks: first for compactly equipped pseudometric spaces (Subsection~\ref{sec:pseudo}), then for Banach spaces (Subsection~\ref{sec:Banach}), and finally for hyperspaces over pseudometric (Subsection~\ref{sec:hyp_pseudo}), dual (Subsection~\ref{sec:hyp_dual}), and reflexive spaces (Subsection~\ref{sec:hyp_ref}).

\subsection*{Acknowledgments}

The author wishes to express his sincere gratitude to Professor A. A. Tuzhilin and Professor A. O. Ivanov for their valuable remarks.

The work of Arsen Galstyan was carried out at Harbin Institute of Technology with the support of the Postdoctoral Research Start-up Funds (Funding card number: AUGA5710026125).

\section{Necessary Definitions and Statements}\label{Definitions}
%

%%%%%%%%%%%%%%%%%%%%%%%%%%%%%%
\subsection{Pseudometric Space}
%%%%%%%%%%%%%%%%%%%%%%%%%%%%%%

\begin{definition}
Let $X$ be an arbitrary set. A function $d\colon X\times X\to [0,\infty)$ is called a \textit{pseudometric} if for all $x,y,z\in X$ the following conditions are satisfied$:$
\begin{enumerate}
\item[$(1)$] $d(x,x)=0;$
\item[$(2)$] $d(x,y)=d(y,x)$ $($symmetry$);$
\item[$(3)$] $d(x,z)\le d(x,y)+d(y,z)$ $($triangle inequality$).$
\end{enumerate}
In contrast to a metric, a pseudometric does not require that $d(x,y)=0$ implies $x=y;$ distinct points may be at zero distance.
The set $X$ together with such a function $d$ is called a pseudometric space.
\end{definition}

We emphasize that any metric space is a pseudometric space. For convenience, the distance between two points $a, b$ of a pseudometric space $X$ will be denoted by $|a\, b|$ instead of $d(a, b)$.

%%%%%%%%%%%%%%%%%%%%%%%%%%%%%%
\subsection{Definition and Properties of $r$-Neighborhoods}\label{sec:Neighborhoods}
%%%%%%%%%%%%%%%%%%%%%%%%%%%%%%

\begin{definition}\label{dfn:one}
For any subset $A$ of a pseudometric space $X$ and any point $p\in X$, the \textit{distance from $p$ to $A$} is the value
$$
|p\, A|=\inf\bigl\{|p\, a|\colon a\in A\bigr\}.
$$
In particular, for empty $A$ we have $|p\, A|=\infty$.
\end{definition}

\begin{definition}\label{dfn:balls}
Let $A$ be a subset of a pseudometric space $X$ and $0\le r<\infty$. The subsets
$$
B_r(A)=\{p\in X\colon |p\, A|\le r\},\ \ U_r(A)=\{p\in X\colon |p\, A|<r\}
$$
are called, respectively, the closed and open ball centered at $A$ with radius $r$ or the closed and the open $r$-neighborhood of $A$.
\end{definition}

In the case $A = \{a\}$, the notations $B_r\bigl(\{a\}\bigr)$ and $U_r\bigl(\{a\}\bigr)$ will be replaced for brevity by $B_r(a)$ and $U_r(a)$, respectively. 

The next lemma is standard, see~\cite[p.~22]{TuzLections}. The proof of this lemma does not rely on the property $|x\, y|=0 \Rightarrow x=y$; hence the lemma remains valid in pseudometric spaces.

\begin{lemma}\label{sum_0}
Let $A$ be a nonempty subset of a pseudometric space $X$, and $r, r' \in [0, \infty)$. Then $B_r\bigl(B_{r'}(A)\bigr)\subset B_{r+r'}(A)$.
\end{lemma}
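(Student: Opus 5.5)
The plan is to argue directly from the definitions, using only the triangle inequality and one approximation step. Fix $p\in B_r\bigl(B_{r'}(A)\bigr)$, so that $\bigl|p\,B_{r'}(A)\bigr|\le r$. We must show $|p\,A|\le r+r'$. Since the inequality is non-strict, it suffices to show $|p\,A|\le r+r'+2\varepsilon$ for every $\varepsilon>0$ and then let $\varepsilon\to0$.

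First, the set $B_{r'}(A)$ is nonempty, because $A\ne\emptyset$ and every $a\in A$ has $|a\,A|=0\le r'$. Hence $\bigl|p\,B_{r'}(A)\bigr|$ is a genuine infimum over a nonempty set, and it is at most $r<\infty$. By the definition of infimum (Definition~\ref{dfn:one}) there is a point $q\in B_{r'}(A)$ with $|p\,q|<r+\varepsilon$. Since $|q\,A|\le r'$, again by the definition of infimum there is a point $a\in A$ with $|q\,a|<r'+\varepsilon$.

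By the triangle inequality (property $(3)$ of a pseudometric),
$$
|p\,A|\le|p\,a|\le|p\,q|+|q\,a|<r+r'+2\varepsilon .
$$
Letting $\varepsilon\to0$ gives $|p\,A|\le r+r'$, that is, $p\in B_{r+r'}(A)$.

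The only delicate point is that neither infimum need be attained. We therefore pass through $\varepsilon$-approximations rather than choosing nearest points. We also use the nonemptiness of $A$ so that the distances involved are finite and the approximating points exist. Nowhere is the implication $|x\,y|=0\Rightarrow x=y$ used, so the argument holds verbatim in pseudometric spaces.
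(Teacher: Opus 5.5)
Your argument is correct: it is the standard $\varepsilon$-approximation plus triangle-inequality proof, which is what the paper relies on when it cites Tuzhilin's lecture notes instead of writing out a proof, and the paper likewise remarks that $|x\,y|=0\Rightarrow x=y$ is never used. Your separate nonemptiness check is harmless but automatic, since $\bigl|p\,B_{r'}(A)\bigr|\le r<\infty$ already forces $B_{r'}(A)\neq\emptyset$.
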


%%%%%%%%%%%%%%%%%%%%%%%%%%%%%%
\subsection{Hausdorff Distance}\label{sec:Haus}
%%%%%%%%%%%%%%%%%%%%%%%%%%%%%%

\begin{definition}\label{distance_H}
Let $X$ be a pseudometric space. For any nonempty $A, B\subset X$, we define the Hausdorff distance $d_H$ as follows:
$$
d_H(A, B) = \inf\bigl\{r:A\subset B_r(B),\,B\subset B_r(A)\bigr\}.
$$
\end{definition}

It is also well known that there is an equivalent definition of the Hausdorff distance $d_H$ in metric spaces (see, for example, the work~\cite[Proposition~5.1]{TuzLections}). The proof of the equivalence of these definitions does not use the property $|x\, y|=0 \Rightarrow x=y$ in any way; therefore, the equivalence also holds in pseudometric spaces.

\begin{definition}\label{distance_H2}
$$
d_H(A, B) = \max\bigl\{\sup\limits_{a\in A} |a\, B|,\, \sup\limits_{b\in B} |b\, A|\bigr\}.
$$
\end{definition}

Notice that $d_H$ is non-negative, finite for bounded $A$ and $B$, symmetric, and satisfies the triangle inequality, but can be zero for $A\ne B$. The following result, which also holds in pseudometric spaces, is well known, see, e.g.,~\cite[Proposition~7.3.3]{Burago}.

\begin{proposition}
Let $X$ be a pseudometric space. Then
\begin{itemize}
\item $d_H\bigl(A, \Cl(A)\bigr) = 0$ for any nonempty $A\subset X;$
\item If $A$ and $B$ are closed subsets of the space $X$ and $d_H(A, B) = 0$, then $A = B.$
\end{itemize}
\end{proposition}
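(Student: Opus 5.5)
The statement has two parts, and I would prove each directly from the second form of the Hausdorff distance (Definition~\ref{distance_H2}), which we are allowed to use in pseudometric spaces. Throughout, $\Cl(A)$ is the closure with respect to the topology induced by the pseudometric. The fact I will use is the elementary characterisation
$$
p\in \Cl(A)\iff |p\,A|=0 .
$$
This holds because every open ball $U_\varepsilon(p)$ meets $A$ exactly when $|p\,A|<\varepsilon$. Nothing in this argument uses $|x\,y|=0\Rightarrow x=y$.

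\emph{First item.} Since $A\subset\Cl(A)$, we have $|a\,\Cl(A)|=0$ for every $a\in A$, so $\sup_{a\in A}|a\,\Cl(A)|=0$. For the other term, take $p\in\Cl(A)$. By the characterisation above, $|p\,A|=0$. Hence $\sup_{p\in\Cl(A)}|p\,A|=0$, and so $d_H\bigl(A,\Cl(A)\bigr)=\max\{0,0\}=0$.

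\emph{Second item.} Let $A,B$ be closed with $d_H(A,B)=0$. Both suprema in Definition~\ref{distance_H2} then vanish, so $|a\,B|=0$ for every $a\in A$. This gives $a\in\Cl(B)=B$, that is, $A\subset B$. By symmetry $B\subset A$, hence $A=B$.

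The only place needing care is the characterisation of closure via $|p\,A|=0$ in the pseudometric setting. I would check it explicitly. If $|p\,A|=0$, then for each $\varepsilon>0$ there is $a\in A$ with $|p\,a|<\varepsilon$, so every basic neighbourhood $U_\varepsilon(p)$ meets $A$. Conversely, if $|p\,A|=\delta>0$, then $U_\delta(p)$ is an open set containing $p$ and disjoint from $A$.

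Note that in the second item $A$ must be nonempty for $d_H$ to be defined, as required by Definition~\ref{distance_H}.
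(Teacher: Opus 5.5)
Your proof is correct. The characterisation $p\in\Cl(A)\iff |p\,A|=0$ is properly checked via open balls, and both items follow from it exactly as you say, without ever using $|x\,y|=0\Rightarrow x=y$. The paper gives no proof of its own here: it only cites Proposition~7.3.3 of Burago--Burago--Ivanov and remarks that the result carries over to pseudometric spaces. Your argument is the standard one behind that citation, written out explicitly for the pseudometric case.
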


%%%%%%%%%%%%%%%%%%%%%%%%%%%%%%
\subsection{Hyperspace}\label{sec:hyperspace}
%%%%%%%%%%%%%%%%%%%%%%%%%%%%%%

Let $X$ be a topological space, and let $\tau$ be another topology on $X$. We denote by $\Cl A$ the closure of $A$ in the original topology, and by $\Cl_{\tau} A$ the closure in the topology $\tau$. Define 
\begin{eqnarray*}
  \mathcal{P}_{\Cl_{\tau}}(X)&:=&\{A\subset X\colon A\text{ is nonempty and $\tau$-closed}\}.
\end{eqnarray*}
Now, let $X$ be a pseudometric space, and let $\tau$ be another topology on $X$. Define
\begin{eqnarray*}
  \mathcal{P}_{\Cl_{\tau}, \B}(X)&:=&\{A\subset X\colon A\text{ is nonempty, $\tau$-closed and bounded}\}.
\end{eqnarray*}
Let $X$ be a linear topological space. Define
\begin{eqnarray*}
  \mathcal{P}_{\Cl, \Conv}(X)&:=&\{A\subset X\colon A\text{ is nonempty, closed and convex}\}.
\end{eqnarray*}
Let $X$ be a linear topological space, and let $\tau$ be another topology on $X$. Define
\begin{eqnarray*}
  \mathcal{P}_{\Cl_{\tau}, \Conv}(X)&:=&\{A\subset X\colon A\text{ is nonempty, $\tau$-closed and convex}\}.
\end{eqnarray*}
Let $X$ be a normed space. Define
\begin{eqnarray*}
  \mathcal{P}_{\Cl, \B, \Conv}(X)&:=&\{A\subset X\colon A\text{ is nonempty, closed, bounded and convex}\}.
\end{eqnarray*}
Let $X$ be a normed space, and $\tau$ be another topology on $X$. Define
\begin{eqnarray*}
  \mathcal{P}_{\Cl_{\tau}, \B, \Conv}(X)&:=&\{A\subset X\colon A\text{ is nonempty, $\tau$-closed, bounded and convex}\}.
\end{eqnarray*}

We write $\mathcal{P}_{*}(X)$ to denote any of the above families of nonempty subsets of $X$, where the subscript ``$*$'' stands for one of the listed combinations of properties (closedness, boundedness, convexity, etc.).

\begin{definition}\label{dfn:hyperspace}
If one of the families $\mathcal{P}_{*}(X)$ is endowed with the Hausdorff distance induced by the pseudometric on $X$, then this family is called a hyperspace over $X$.
\end{definition}

The following lemma will be useful for us later.

\begin{lemma}\label{lem:ball}
Let $X$ be a pseudometric space and all elements of a hyperspace $\mathcal{P}_{*}(X)$ be bounded subsets of $X$. Then, for each closed ball $\mathbb{B}\subset \mathcal{P}_{*}(X)$ centered at an element of $\mathcal{P}_{*}(X)$, there exists a closed ball $B\subset X$ centered at a point of $X$ such that $\mathbb{B}$ coincides with a closed ball in $\mathcal{P}_{*}(B).$
\end{lemma}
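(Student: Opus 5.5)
Let $\mathbb{B}=\{A\in\mathcal{P}_*(X)\colon d_H(A,A_0)\le R\}$ be a closed ball of radius $R\ge 0$ centered at $A_0\in\mathcal{P}_*(X)$. The plan is to enclose every element of $\mathbb{B}$ in one closed ball of $X$. Since $A_0$ is nonempty and bounded, I fix a point $x_0\in A_0$ and set $\rho=\sup_{a\in A_0}|x_0\,a|<\infty$. I then take $B=B_{\rho+R+1}(x_0)$. The slack of $1$ is needed because $d_H(A,A_0)\le R$ only gives, for each $a\in A$, the inequality $|a\,A_0|\le R$, an infimum that need not be attained; with the slack I can pick $a_0\in A_0$ with $|a\,a_0|\le R+1$.

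First step: show $A\subset B$ for every $A\in\mathbb{B}$. For $a\in A$, choose $a_0$ as above. The triangle inequality gives $|x_0\,a|\le|x_0\,a_0|+|a_0\,a|\le\rho+R+1$. Alternatively, I can use Definition~\ref{distance_H2} together with Lemma~\ref{sum_0}: $A\subset B_{R'}(A_0)$ for every $R'>R$, and $A_0\subset B_\rho(x_0)$, so $A\subset B_{R'+\rho}(x_0)$.

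Second step: interpret $\mathcal{P}_*(B)$ as the same family of subsets built inside the pseudometric subspace $B$. If $\tau$ appears in the definition of the family, then $B$ carries the induced topology; if convexity appears, it is understood inside the ambient space. I then check that $\mathcal{P}_*(B)$ contains exactly the members of $\mathcal{P}_*(X)$ that lie in $B$, or at least that $\mathbb{B}\subset\mathcal{P}_*(B)$ and $A_0\in\mathcal{P}_*(B)$. The Hausdorff distance of subsets of $B$ computed in $B$ equals the one computed in $X$, since Definition~\ref{distance_H2} involves only distances between points of the two sets. Hence the closed ball of radius $R$ centered at $A_0$ in $\mathcal{P}_*(B)$ is $\{A\in\mathcal{P}_*(B)\colon d_H(A,A_0)\le R\}$. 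By the first step this equals $\mathbb{B}$.

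The main obstacle is this identification of $\mathcal{P}_*(B)$ with $\{A\in\mathcal{P}_*(X)\colon A\subset B\}$ for properties like $\tau$-closedness and closedness. A $\tau$-closed subset of $X$ lying in $B$ is closed in the subspace topology of $B$, which gives the inclusion actually needed, namely $\mathbb{B}\subset\mathcal{P}_*(B)$. The converse inclusion requires that $B$ itself be closed, which holds for closed balls in the metric topology. For $\tau$, I expect to rely on the intended convention that $\tau$ is a topology on the ball, as in the compactly equipped setting, or simply to note that the lemma only needs the inclusion in the direction $\mathbb{B}\subset\mathcal{P}_*(B)$. If this direction is unclear for some family, I would state the lemma with $\mathcal{P}_*(B)$ defined as the subsets of $B$ belonging to $\mathcal{P}_*(X)$, which makes the equality immediate.
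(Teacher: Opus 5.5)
Your proposal is correct and is essentially the paper's proof. The paper likewise picks $m\in M$ with $M\subset B_R(m)$, uses $N\subset B_r(M)$ and Lemma~\ref{sum_0} to get $N\subset B_{r+R}(m)$, and concludes $\mathbb{B}\subset\mathcal{P}_{*}(B)$.

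Your slack of $1$ is harmless but unnecessary. Since $B_r(A)$ is defined by $|p\,A|\le r$, the bound $d_H(A,A_0)\le R$ already gives $A\subset B_R(A_0)$, and Lemma~\ref{sum_0} absorbs the non-attainment of the infimum.

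Your worry about $\mathcal{P}_{*}(B)\subset\mathcal{P}_{*}(X)$ is a fair one, and the paper uses it silently; it holds in all of the paper's applications because the balls there are closed in the relevant topology. Note, however, that this reverse inclusion is genuinely needed for the asserted equality. The inclusion $\mathbb{B}\subset\mathcal{P}_{*}(B)$ alone would not suffice, contrary to your remark.
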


\begin{proof}

Since $\mathbb{B}$ is a closed ball in $\mathcal{P}_{*}(X)$ and all elements of a hyperspace $\mathcal{P}_{*}(X)$ are bounded subsets, there exists an element $M\in \mathbb{B}$ and a radius $0\le r < \infty$ such that $\mathbb{B} = \bigl\{N\in \mathcal{P}_{*}(X)\colon d_H(M, N) \le r\bigr\}$. Since $M$ is bounded in $X$, there is a point $m\in M$ and a number $0\le R < \infty$ such that $M\subset B_R(m)$. By Lemma~\ref{sum_0}, for any $N\in \mathbb{B}$, we obtain $N\subset B_r(M) \subset B_r\bigl(B_R(m)\bigr) \subset B_{r+R}(m)\subset X.$ 

Denote $B_{r+R}(m)$ by $B$. Thus, $\mathbb{B}\subset \mathcal{P}_{*}(B)$. Hence, 
$$\mathbb{B} = \bigl\{N\in \mathcal{P}_{*}(B)\colon d_H(M, N) \le r\bigr\}.$$
The lemma is proved.\qed

\end{proof}

%%%%%%%%%%%%%%%%%%%%%%%%%%%%%%
\subsection{Vietoris Topology}\label{sec:Vietoris}
%%%%%%%%%%%%%%%%%%%%%%%%%%%%%%

\begin{definition}
Let $(X, \tau)$ be a topological space. The Vietoris topology is a topology on the set $\mathcal{P}_{\Cl_{\tau}}(X)$ defined by a subbase consisting of two types of sets for each $\tau$-open $U \subset X:$
\begin{itemize}
\item $U^- = \bigl\{M \in \mathcal{P}_{\Cl_{\tau}}(X) \colon M \cap U \neq \emptyset\bigr\};$
\item $U^+ = \bigl\{M \in \mathcal{P}_{\Cl_{\tau}}(X) \colon M \subset U\bigr\}.$
\end{itemize}
Thus, the subbase of the Vietoris topology is$:$
\begin{equation}\label{eq:subbase}
\mathcal{S} = \{U^- \colon U \in \tau\} \cup \{U^+ \colon U \in \tau\}.
\end{equation}
The Vietoris topology on $\mathcal{P}_{\Cl_{\tau}}(X)$ generated by $\tau$ will be denoted in this text by $V_{\tau}$.
\end{definition}

Note that any subset of a topological space is itself a topological space with the induced topology. Therefore, if $\mathcal{P}_{*}(X)\subset \mathcal{P}_{\Cl_{\tau}}(X)$, then we will assume that the Vietoris topology $V_{\tau}$ is also introduced on $\mathcal{P}_{*}(X)$, meaning the topology induced from $\mathcal{P}_{\Cl_{\tau}}(X)$. Indeed, the Vietoris topology $V_{\tau}$ can be introduced on the space $\mathcal{P}_{*}(X)$ by the same formula via the subbase~\eqref{eq:subbase}.

Analogously, the Vietoris topology can be introduced also on the space $\mathcal{P}_{\Cl_{\tau}}(X)\cup\{\emptyset\}$. Strickland proved in~\cite[Theorem~21.48]{Strickland} that if $(X,\tau)$ is a compact Hausdorff space, then $\mathcal{P}_{\Cl_{\tau}}(X)\cup\{\emptyset\}$ is compact Hausdorff in the Vietoris topology. Hence, $\mathcal{P}_{\Cl_{\tau}}(X)$ is also a Hausdorff space. Further, note that $\emptyset^+ = \{\emptyset\}$, and therefore $\{\emptyset\}$ is open with respect to the Vietoris topology. Hence, $\mathcal{P}_{\Cl_{\tau}}(X)$ is compact as a closed subset of a compact space. Thus, we have the following theorem.

\begin{theorem}[{\cite[Theorem~21.48]{Strickland}}]\label{thm:comp_hyp}
Let $(X, \tau)$ be a compact Hausdorff topological space. Then $\bigl(\mathcal{P}_{\Cl_{\tau}}(X), V_{\tau}\bigr)$ is also a compact Hausdorff topological space.
\end{theorem}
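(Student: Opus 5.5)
Since the statement is attributed to Strickland and the paragraph before it already sketches the reduction, I would prove it in two stages. First show that $\mathcal{P}_{\Cl_{\tau}}(X)\cup\{\emptyset\}$ is compact Hausdorff in the Vietoris topology. Then pass to $\mathcal{P}_{\Cl_{\tau}}(X)$ by removing the isolated point $\emptyset$.

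\emph{Hausdorff property.} Take two distinct closed sets $A\neq B$. Without loss of generality there is a point $x\in A\setminus B$. Since $B$ is closed in the compact Hausdorff space $X$, it is compact. Because compact Hausdorff spaces are regular, I can choose disjoint $\tau$-open sets $U\ni x$ and $W\supset B$. Then $A\in U^-$ and $B\in W^+$. These two subbasic sets are disjoint: any $M\in W^+$ satisfies $M\subset W$, so it cannot meet $U$.

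\emph{Compactness.} I would use Alexander's subbase lemma, which says it suffices that every cover by subbasic sets $U_i^-$ ($i\in I$) and $W_j^+$ ($j\in J$) has a finite subcover. Put $U=\bigcup_i U_i$ and $F=X\setminus U$, which is $\tau$-closed; this $F$ is the key element of the hyperspace.
\begin{itemize}
\item Since $F$ misses every $U_i$, it lies in no $U_i^-$. If the cover is to contain $F$, then $F\subset W_{j_0}$ for some $j_0$ (when $F=\emptyset$ this holds provided $J\neq\emptyset$).
\item The set $X\setminus W_{j_0}$ is closed, hence compact, and it is contained in $U$. So it is covered by finitely many $U_{i_1},\dots,U_{i_n}$.
\end{itemize}
Now let $M$ be any closed set. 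Either $M\subset W_{j_0}$, or $M$ meets $X\setminus W_{j_0}$ and hence meets some $U_{i_k}$. Therefore $W_{j_0}^+,U_{i_1}^-,\dots,U_{i_n}^-$ is a finite subcover.

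In the degenerate case $J=\emptyset$, the element $\emptyset$ lies in no $U_i^-$, so the cover cannot contain it. Hence $J$ is nonempty whenever the family is a cover. I expect this combinatorial subcover step, including the handling of $\emptyset$, to be the main obstacle; the rest is routine.

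\emph{Passing to $\mathcal{P}_{\Cl_{\tau}}(X)$.} Since $\emptyset^+=\{\emptyset\}$ is open, $\emptyset$ is an isolated point. So $\mathcal{P}_{\Cl_{\tau}}(X)$ is closed in $\mathcal{P}_{\Cl_{\tau}}(X)\cup\{\emptyset\}$ and is therefore compact. It is Hausdorff as a subspace of a Hausdorff space. Its subspace topology coincides with $V_\tau$, because the traces of $U^\pm$ on it are exactly the subbasic sets of $V_\tau$.
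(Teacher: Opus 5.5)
Your proof is correct and follows the same reduction as the paper: first show that $\mathcal{P}_{\Cl_{\tau}}(X)\cup\{\emptyset\}$ is compact Hausdorff in the Vietoris topology, then discard the isolated point $\emptyset$, using that $\emptyset^+=\{\emptyset\}$ is open. The only difference is that the paper simply cites Strickland's Theorem~21.48 for the first stage. You prove that stage yourself, using regularity for the Hausdorff property and Alexander's subbase lemma applied to the closed set $X\setminus\bigcup_i U_i$ for compactness; this is the standard argument, and you carry it out correctly, including the edge cases involving $\emptyset$.
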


\begin{definition}
Let $(X, \tau)$ be a Hausdorff topological space and let $\{A_{\lambda}\}_{\lambda\in \Lambda}$ be a net in $\mathcal{P}_{\Cl_{\tau}}(X)$.
\begin{itemize}
\item The lower limit of the net $\{A_{\lambda}\}_{\lambda\in \Lambda}$ is the set of all points $x\in X$ such that for each neighborhood $U_x\in \tau$ of $x$, there exists an index $\lambda_0\in \Lambda$ such that for all $\lambda\ge \lambda_0$, we have $A_{\lambda}\cap U_x\neq\emptyset$. The lower limit of the net $\{A_{\lambda}\}_{\lambda\in \Lambda}$ is denoted by $\Li A_{\lambda}$.
\item The upper limit of the net $\{A_{\lambda}\}_{\lambda\in \Lambda}$ is the set of all points $x\in X$ such that for each neighborhood $U_x\in \tau$ of $x$ and for any index $\lambda_0\in \Lambda$, there exists $\lambda\ge \lambda_0$ such that $A_{\lambda}\cap U_x\neq\emptyset$. The upper limit of the net $\{A_{\lambda}\}_{\lambda\in \Lambda}$ is denoted by $\Ls A_{\lambda}$.
\item The net $\{A_{\lambda}\}_{\lambda\in \Lambda}$ is said to converge in the sense of Kuratowski to a subset $A\subset X$ if $\Li A_{\lambda} = \Ls A_{\lambda} = A$. In this case, the set $A$ is called the Kuratowski limit and is denoted by $\Lim\limits_{\lambda} A_{\lambda}$.
\end{itemize}
\end{definition}

The following proposition is well known, e.g., see~\cite{Beer}.

\begin{proposition}[{\cite[p.~145]{Beer}}]
The subsets $\Li A_{\lambda}$ and $\Ls A_{\lambda}$ are $\tau$-closed subsets of a Hausdorff topological space $(X, \tau)$, whether or not the terms $A_{\lambda}$ of the net are $\tau$-closed.
\end{proposition}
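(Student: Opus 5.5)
The claim is that $\Li A_{\lambda}$ and $\Ls A_{\lambda}$ are $\tau$-closed. The plan is to show that each complement is $\tau$-open, using only the fact that both limits are defined by conditions on $\tau$-open neighborhoods. The argument never uses that the $A_\lambda$ are closed, nor the Hausdorff property. Throughout, the key remark is that if $V$ is an open set containing $y$, then $V$ is itself a neighborhood of $y$. So any condition ``for each neighborhood $U_y$'' can be tested on $V$.

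\emph{Lower limit.} Let $x\notin \Li A_{\lambda}$. By definition there is an open neighborhood $U_x$ such that for every $\lambda_0$ some $\lambda\ge\lambda_0$ has $A_\lambda\cap U_x=\emptyset$. I claim $U_x\cap \Li A_\lambda=\emptyset$. Suppose instead that $y\in U_x\cap\Li A_\lambda$. Since $U_x$ is an open neighborhood of $y$, there is $\lambda_0$ with $A_\lambda\cap U_x\ne\emptyset$ for all $\lambda\ge\lambda_0$. This contradicts the choice of $U_x$. Hence the complement of $\Li A_\lambda$ is open, so $\Li A_\lambda$ is closed.

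\emph{Upper limit.} Let $x\notin\Ls A_\lambda$. Then there are an open $U_x\ni x$ and an index $\lambda_0$ with $A_\lambda\cap U_x=\emptyset$ for all $\lambda\ge\lambda_0$. For any $y\in U_x$, the same pair $(U_x,\lambda_0)$ witnesses $y\notin \Ls A_\lambda$. Hence $U_x$ lies in the complement, and $\Ls A_\lambda$ is closed.

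There is no real obstacle. The only point needing care is to quantify correctly: when negating the definitions, one must keep the witnessing neighborhood \emph{open}, so that it serves as a neighborhood of every point it contains. The definition in the paper takes neighborhoods $U_x\in\tau$, so this holds automatically.
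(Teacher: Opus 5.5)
Your proof is correct and complete. In each case the open set $U_x$ that witnesses the negated definition is also an open neighborhood of every point it contains, so the whole of $U_x$ lies in the complement. As you observe, the argument uses neither the Hausdorff property nor $\tau$-closedness of the $A_\lambda$.

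The paper gives no proof of its own here; it only cites Beer. Your complement-is-open argument is the standard one. For $\Ls A_{\lambda}$ there is an equivalent shortcut: unwinding the definition gives $\Ls A_{\lambda} = \bigcap_{\lambda_0\in\Lambda}\Cl_{\tau}\bigl(\bigcup_{\lambda\ge\lambda_0}A_{\lambda}\bigr)$, which is an intersection of $\tau$-closed sets. $\Li A_{\lambda}$ has no equally simple closure formula for general nets, so the direct argument you give is the natural one there.
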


The following result in the book~\cite{Beer} is formulated for the Fell topology on $\mathcal{P}_{\Cl_{\tau}}(X)$. This topology differs from the Vietoris topology only in that, for forming the subbase, the sets $U^+$ are considered only for those $\tau$-open $U\subset X$ for which $X\setminus U$ is $\tau$-compact. Consequently, if the space $(X, \tau)$ is compact, then the Vietoris topology and the Fell topology on $\mathcal{P}_{\Cl_{\tau}}(X)$ coincide.

\begin{theorem}[{\cite[Theorem~5.2.6]{Beer}}]\label{thm:Kuratow}
Let $(X, \tau)$ be a compact Hausdorff topological space. A net $\{A_{\lambda}\}_{\lambda\in \Lambda}\subset \mathcal{P}_{\Cl_{\tau}}(X)$ converges in the topology $V_{\tau}$ to $A\in \mathcal{P}_{\Cl_{\tau}}(X)$ if and only if $\Lim\limits_{\lambda} A_{\lambda} = A$.
\end{theorem}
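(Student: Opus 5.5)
Both implications can be proved directly from the definitions, using compactness to get convergent subnets and the Hausdorff property to separate points from closed sets. Throughout, $A_\lambda\to A$ means convergence in $V_\tau$.

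\emph{Forward direction: $A_\lambda\to A$ in $V_\tau$ implies $\Li A_\lambda=\Ls A_\lambda=A$.} Since always $\Li A_\lambda\subset\Ls A_\lambda$, it suffices to prove $A\subset \Li A_\lambda$ and $\Ls A_\lambda\subset A$.

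\begin{itemize}
\item For $A\subset\Li A_\lambda$: take $x\in A$ and an open $U\ni x$. Then $A\in U^-$, and $U^-$ is a $V_\tau$-open set, so eventually $A_\lambda\in U^-$, i.e. $A_\lambda\cap U\neq\emptyset$. Hence $x\in\Li A_\lambda$.
\item For $\Ls A_\lambda\subset A$: let $x\notin A$. A compact Hausdorff space is regular and $A$ is $\tau$-closed, so there are disjoint open sets $U\ni x$ and $W\supset A$. Then $A\in W^+$, so eventually $A_\lambda\subset W$, and therefore eventually $A_\lambda\cap U=\emptyset$. Hence $x\notin\Ls A_\lambda$.
\end{itemize}

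\emph{Converse: $\Lim A_\lambda=A$ implies $A_\lambda\to A$ in $V_\tau$.} It is enough to check the subbasic neighborhoods of $A$.

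\begin{itemize}
\item Suppose $A\in U^-$. Pick $x\in A\cap U$. Since $x\in\Li A_\lambda$, eventually $A_\lambda\cap U\neq\emptyset$, i.e. eventually $A_\lambda\in U^-$.
\item Suppose $A\in U^+$, i.e. $A\subset U$, and assume towards a contradiction that $A_\lambda\not\subset U$ frequently. Then there is a cofinal subset $\Lambda'\subset\Lambda$ and points $x_\lambda\in A_\lambda\setminus U$ for $\lambda\in\Lambda'$.
  \begin{itemize}
  \item The points $x_\lambda$ lie in the compact set $X\setminus U$, so they have a cluster point $x\in X\setminus U$.
  \item For any open $V\ni x$ and any $\lambda_0$, there is $\lambda\ge\lambda_0$ with $\lambda\in\Lambda'$ and $x_\lambda\in V$. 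Thus $A_\lambda\cap V\neq\emptyset$, so $x\in\Ls A_\lambda=A\subset U$.
  \item This contradicts $x\in X\setminus U$.
  \end{itemize}
\end{itemize}

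A finite intersection of subbasic sets is handled by taking an index beyond the finitely many thresholds, which exists because $\Lambda$ is directed.

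\emph{Main obstacle.} The delicate step is the $U^+$ part of the converse. Compactness of $X\setminus U$ is essential there, which is exactly why the Fell and Vietoris topologies agree in compact spaces. One must also make sure the cluster point is taken along the cofinal subfamily $\Lambda'$, so that membership in $\Ls$ genuinely follows.
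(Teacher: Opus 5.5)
Your proof is correct, but it takes a different route from the paper, which gives no argument of its own. The paper quotes Beer's Theorem~5.2.6, which is formulated for the Fell topology. It then observes that the Fell and Vietoris topologies on $\mathcal{P}_{\Cl_{\tau}}(X)$ coincide when $(X,\tau)$ is compact, since every $\tau$-closed set is then $\tau$-compact.

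You instead verify both implications directly from the subbase of $V_{\tau}$ and the definitions of $\Li$ and $\Ls$. Your $U^+$ step in the converse is exactly the Fell-type argument, with compactness of $X\setminus U$ coming for free. In effect, it reproves the paper's remark that the two topologies agree.

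The citation route is shorter and places the statement inside Beer's general theory of the Fell topology. Your route is self-contained and shows where each hypothesis is used:
\begin{itemize}
\item Vietoris convergence $\Rightarrow$ Kuratowski convergence needs only regularity, to separate a point from the closed set $A$.
\item The converse needs only compactness of $X$.
\end{itemize}

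One point you leave implicit is that the cofinal set $\Lambda'$ is itself directed, so $(x_\lambda)_{\lambda\in\Lambda'}$ is a genuine net and has a cluster point. This follows immediately from cofinality and the directedness of $\Lambda$.
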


%%%%%%%%%%%%%%%%%%%%%%%%%%%%%%
\subsection{Weak and Weak$^*$ Topologies}\label{sec:weak_top}
%%%%%%%%%%%%%%%%%%%%%%%%%%%%%%

The weakest topology on a set $X$ with respect to which every functional $f\in \mathcal{F}$ remains continuous is denoted in functional analysis by $\sigma(X, \mathcal{F})$. Such a topology always exists, and it is the topology generated by the subbase $\bigl\{f^{-1}(U)\colon f\in \mathcal{F}, U \text{ is open in }\mathbb{R}\bigr\}$.

\begin{definition}
A family of functionals $\mathcal{F}$ on a set $X$ is said to separate points if for any $x\neq y$ in $X$ there exists $f\in \mathcal{F}$ such that $f(x)\neq f(y)$.
\end{definition}

\begin{definition}
A linear topological space $X$ is called locally convex if it has a base of neighborhoods of zero consisting of convex sets. In such a case, the linear topology of $X$ is also called a locally convex topology.
\end{definition}

\begin{definition}
Let $X$ be a linear topological space. Then the linear space of all continuous linear functionals on $X$ is called the dual space of $X$ and is denoted by $X^*$.
\end{definition}

The next theorem will be useful for us later.

\begin{theorem}[{\cite[Theorem~3.10]{Rudin}}]\label{thm:loc_conv}
Let $X$ be a linear space and let a space of linear functionals $\mathcal{F}$ on $X$ separate points. Then $\bigl(X, \sigma(X, \mathcal{F})\bigr)$ is a locally convex Hausdorff topological space and its dual space is $\mathcal{F}$, i.e., $X^* = \mathcal{F}$.
\end{theorem}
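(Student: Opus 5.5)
The statement is Theorem~\ref{thm:loc_conv}: if a space $\mathcal{F}$ of linear functionals on a linear space $X$ separates points, then $\sigma(X,\mathcal{F})$ is a locally convex Hausdorff linear topology and $X^*=\mathcal{F}$. I would organize the proof in three parts: the linear-topological structure, the Hausdorff property, and the identification of the dual.

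\textbf{Structure.} First I describe $\sigma(X,\mathcal{F})$ concretely. A base at $x_0$ is given by the finite intersections of subbasic sets
$$V(x_0;f_1,\dots,f_n;\varepsilon)=\bigl\{x\colon |f_i(x)-f_i(x_0)|<\varepsilon,\ i=1,\dots,n\bigr\}.$$
These sets equal $x_0+V(0;f_1,\dots,f_n;\varepsilon)$, so the topology is translation invariant. The sets $V(0;\dots)$ are convex, balanced and absorbing, which gives local convexity.

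Continuity of addition follows from
$$V(0;f_i;\varepsilon/2)+V(0;f_i;\varepsilon/2)\subset V(0;f_i;\varepsilon).$$
Continuity of scalar multiplication at $(\alpha_0,x_0)$ follows from
$$\alpha x-\alpha_0x_0=\alpha(x-x_0)+(\alpha-\alpha_0)x_0,$$
together with the fact that each $f_i(x_0)$ is finite, so $|\alpha-\alpha_0|$ can be chosen small relative to $\max_i|f_i(x_0)|$.

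\textbf{Hausdorff property.} Given $x\neq y$, separation provides $f\in\mathcal{F}$ with $f(x)\neq f(y)$. Set $\delta=|f(x)-f(y)|/2$. Then the neighborhoods $V(x;f;\delta)$ and $V(y;f;\delta)$ are disjoint.

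\textbf{Duality.} Every $f\in\mathcal{F}$ is $\sigma$-continuous by the definition of the topology, so $\mathcal{F}\subset X^*$.

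For the converse, take a continuous linear functional $\Lambda$. Continuity at $0$ gives some basic neighborhood $V(0;f_1,\dots,f_n;\varepsilon)$ on which $|\Lambda|<1$. Now let $x$ lie in $\bigcap_i\ker f_i$. Then $tx$ lies in this neighborhood for every scalar $t$, so $|t\Lambda(x)|<1$ for all $t$, which forces $\Lambda(x)=0$. Hence $\bigcap_i\ker f_i\subset\ker\Lambda$.

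\textbf{Main obstacle.} The key step is the linear-algebra lemma that this kernel inclusion implies $\Lambda=\sum_i c_if_i$. I would prove it via the map $\pi\colon X\to\mathbb{K}^n$, $\pi(x)=(f_1(x),\dots,f_n(x))$. The kernel condition means $\Lambda$ factors through $\pi(X)$: define $g(\pi(x))=\Lambda(x)$, which is well defined precisely because $\ker\pi\subset\ker\Lambda$. Then $g$ is a linear functional on the subspace $\pi(X)\subset\mathbb{K}^n$. Extending $g$ linearly to all of $\mathbb{K}^n$ makes it of the form $(u_1,\dots,u_n)\mapsto\sum c_iu_i$, and so $\Lambda=\sum c_if_i$.

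Since $\mathcal{F}$ is a linear space, $\sum c_if_i\in\mathcal{F}$. Therefore $X^*=\mathcal{F}$.
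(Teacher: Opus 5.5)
Your proposal is correct and is essentially the standard proof the paper relies on: the paper gives no argument of its own but cites Rudin's Theorem~3.10. That proof likewise uses the local base of convex balanced sets $\{x\colon |f_i(x)|<r_i,\ i=1,\dots,n\}$ for the locally convex structure, and obtains $X^*\subset\mathcal{F}$ by the same factorization of $\Lambda$ through $\pi\colon X\to\mathbb{K}^n$ (Rudin's Lemma~3.9).

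The one point to tighten is the scalar-multiplication step. Besides making $|\alpha-\alpha_0|$ small, you should also take $x-x_0\in V\bigl(0;f_1,\dots,f_n;\varepsilon/(2(|\alpha_0|+1))\bigr)$, so that the term $\alpha(x-x_0)$ is controlled.
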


\begin{definition}
Let $X$ be a linear topological space. Then the topology $\sigma(X, X^*)$ is called the weak topology.
\end{definition}

Let $X$ be a linear topological space. For each element $x\in X$, consider the \textit{evaluation functional}
$$e_x\colon X^*\rightarrow \mathbb{R};$$
$$e_x\colon f\mapsto f(x).$$

A mapping which assigns to each element $x\in X$ the evaluation functional $e_x$ is called the \textit{canonical mapping}. Thus, $X$ can be canonically identified with a linear space of linear functionals defined on $X^*$.

\begin{definition}
Let $X$ be a linear topological space. Then the topology $\sigma(X^*, X)$ is called the weak$^*$ topology.
\end{definition}

\begin{remark}\label{rk:reflex}
Clearly, $X$ separates points on $X^*$. First, Theorem~\ref{thm:loc_conv} implies $X^{**}\cong X$, provided that $X^{**}$ is a dual space of $\bigl(X^*, \sigma(X^*, X)\bigr)$. Second, Theorem~\ref{thm:loc_conv} implies the following corollary.
\end{remark}

\begin{corollary}\label{cor:Haus_loc}
Let $X$ be a linear topological space. Then the weak$^*$ topology on $X^*$ is Hausdorff and locally convex.
\end{corollary}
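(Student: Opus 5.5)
The plan is to apply Theorem~\ref{thm:loc_conv} with the linear space $X^*$ in place of $X$ and the family of evaluation functionals $\mathcal{F}=\{e_x\colon x\in X\}$. By definition, the weak$^*$ topology $\sigma(X^*,X)$ is exactly $\sigma(X^*,\mathcal{F})$, where $X$ is identified with $\mathcal{F}$ through the canonical mapping. So the corollary follows once the hypotheses of Theorem~\ref{thm:loc_conv} are checked.

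First I check that $\mathcal{F}$ is a linear space of linear functionals on $X^*$. Each $e_x$ is linear in $f$, since $e_x(\alpha f+\beta g)=\alpha f(x)+\beta g(x)$. The map $x\mapsto e_x$ is linear, because every $f\in X^*$ is linear, so $e_{\alpha x+\beta y}=\alpha e_x+\beta e_y$. Hence $\mathcal{F}$ is the image of a linear map and is therefore a linear space. This is the linear-space structure implicitly used in Remark~\ref{rk:reflex}.

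Second, and this is the only point that needs care, I check that $\mathcal{F}$ separates points of $X^*$. Take $f\neq g$ in $X^*$. They are different functions on $X$, so there is some $x\in X$ with $f(x)\neq g(x)$, that is, $e_x(f)\neq e_x(g)$. No Hahn--Banach argument is required here, because separation is needed on $X^*$ and not on $X$. This is what the remark's phrase ``Clearly, $X$ separates points on $X^*$'' refers to.

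With both hypotheses verified, Theorem~\ref{thm:loc_conv} shows that $\bigl(X^*,\sigma(X^*,X)\bigr)$ is a locally convex Hausdorff topological (vector) space. This is precisely the claim that the weak$^*$ topology on $X^*$ is Hausdorff and locally convex. I do not expect any real obstacle. The only subtlety is that the canonical map $x\mapsto e_x$ need not be injective for a general linear topological space, but this does not matter, since only the image $\mathcal{F}$ enters the argument.
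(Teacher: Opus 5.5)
Your proposal is correct and follows the paper's argument. Via Remark~\ref{rk:reflex}, the paper notes that $X$, identified with the evaluation functionals, separates points of $X^*$, and then applies Theorem~\ref{thm:loc_conv} to $\bigl(X^*,\sigma(X^*,X)\bigr)$; you also check explicitly that the evaluation functionals form a linear space, which the paper leaves implicit.
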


We now formulate the Banach--Alaoglu theorem, which is standard in functional analysis. It will play an important role in the proof of some of our results. It can be found, for example, in~\cite[Theorem~3.15]{Rudin}. We state it in a form convenient for us and in the particular case of normed spaces that we need.

Let $X$ be normed and let the usual operator norm be specified on $X^*$:
\begin{equation}\label{eq:norm}
||f|| = \sup\limits_{||x|| = 1} |f(x)|.
\end{equation}
Then by~\cite{Rudin}, the Banach--Alaoglu theorem asserts that the closed unit ball (in the norm metric) in $X^*$ is weak$^*$-compact (that is, compact with respect to the weak$^*$ topology). By Corollary~\ref{cor:Haus_loc}, the weak$^*$ topology is locally convex. In any linear topology, by definition, translations and multiplications by nonzero scalars are homeomorphisms. Consequently, in a linear topology, all closed balls of nonzero radius (centered at arbitrary points of the space) are homeomorphic to each other. Therefore, from the compactness of the closed unit ball in such a topology, it follows that all closed balls are compact (a closed ball of radius zero is a singleton and hence compact in any topology). Thus, the Banach--Alaoglu theorem can be formulated in the following form, which will be convenient for us.

\begin{theorem}[Banach--Alaoglu Theorem]\label{thm:Ban_Ala}
Let $X$ be a normed space and let the norm~\eqref{eq:norm} be specified on $X^*$. Then each closed ball in $X^*$ is weak$^*$-compact.
\end{theorem}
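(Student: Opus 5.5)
The plan is to derive the statement from the classical Banach--Alaoglu theorem (closed unit ball of $X^*$ is weak$^*$-compact, as in \cite[Theorem~3.15]{Rudin}), which we take as given. The argument then transports compactness from the unit ball to an arbitrary closed ball by an affine homeomorphism of $(X^*,\sigma(X^*,X))$. Fix a closed ball $B_r(f_0)=\{f\in X^*\colon \|f-f_0\|\le r\}$ with respect to the norm~\eqref{eq:norm}. We split into the cases $r=0$ and $r>0$.

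\emph{Case $r=0$.} Then $B_0(f_0)=\{f_0\}$, since $\|\cdot\|$ is a genuine norm on $X^*$. A singleton is compact in any topology, so this case is immediate.

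\emph{Case $r>0$.} Consider the map $T\colon X^*\to X^*$, $T(f)=f_0+rf$. This map is a bijection sending the closed unit ball $B_1(0)$ onto $B_r(f_0)$, because $\|T(f)-f_0\|=r\|f\|$. It remains to check that $T$ is weak$^*$-continuous.

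\emph{Continuity of $T$.} By definition, $\sigma(X^*,X)$ is the weakest topology making every evaluation functional $e_x$, $x\in X$, continuous. So it suffices to show that $e_x\circ T$ is weak$^*$-continuous for each $x$. We have $e_x\circ T(f)=f_0(x)+r\,e_x(f)$, which is a continuous affine function of the continuous map $e_x$. Alternatively, one can appeal to Corollary~\ref{cor:Haus_loc}: the weak$^*$ topology is a linear topology, so translations and nonzero dilations are homeomorphisms.

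\emph{Conclusion.} $B_r(f_0)=T\bigl(B_1(0)\bigr)$ is the continuous image of a weak$^*$-compact set, hence weak$^*$-compact.

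No step is a real obstacle. The only point to verify is that $T$ is continuous for $\sigma(X^*,X)$ itself rather than merely for the norm topology, and the subbase description of $\sigma(X^*,X)$ from Subsection~\ref{sec:weak_top} settles this directly.
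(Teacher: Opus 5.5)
Your proof is correct and takes essentially the same route as the paper. Both start from the classical unit-ball version of Banach--Alaoglu in Rudin, move compactness to an arbitrary ball of positive radius by a translation and dilation (which are weak$^*$-homeomorphisms), and handle the radius-zero ball as a singleton; your check that $T$ is continuous via the evaluation functionals $e_x$ just spells out the paper's appeal to the weak$^*$ topology being linear.
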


\textbf{Henceforth, whenever we speak of a dual (bidual) space, we shall assume that it is normed and its norm is the operator norm~\bfseries\eqref{eq:norm}, induced by the norm of its predual space.}

%%%%%%%%%%%%%%%%%%%%%%%%%%%%%%
\subsection{Reflexive Space}\label{sec:ref_space}
%%%%%%%%%%%%%%%%%%%%%%%%%%%%%%

Note that each evaluation functional $e_y$, where $y\in X$, is continuous on $X^*$ in the topology generated by the norm~\eqref{eq:norm}. Indeed, when $y = 0$, we have $e_y(f) = f(0) = 0$ for any $f\in X^*$. And when $y\neq 0$, from $||f - g|| = \sup\limits_{||x|| = 1} |f(x) - g(x)|\le \varepsilon/||y||$ it follows that $|e_y(f) - e_y(g)| = |f(y) - g(y)| = ||y||\cdot |f(y/||y||) - g(y/||y||)|\le \varepsilon$. Hence, the normed space $X$ is canonically embedded into $X^{**}$. The space $X^{**}$ is sometimes called the \textit{bidual space} of $X$.

\begin{definition}
A normed space $X$ that is canonically isomorphic to its bidual $X^{**}$ is called reflexive.
\end{definition}

To obtain the result below, we will need the following theorem, which is sometimes called Mazur's theorem. In the book~\cite{Rudin}, this theorem is stated for the case of a locally convex Hausdorff topological space $X$ and the weak topology on it. But we have already agreed above that when we speak about the dual space, we assume that its predual is normed. Therefore, we will formulate Mazur's theorem within the framework of normed spaces.

\begin{theorem}[{\cite[Theorem~3.12]{Rudin}}]\label{thm:mazur}
Let $X$ be a normed space, let $A$ be a convex subset of $X$ and $\tau = \sigma(X, X^*)$. Then $\Cl_{\tau} A = \Cl A$.
\end{theorem}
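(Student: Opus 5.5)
The plan is to prove the two inclusions $\Cl A\subset \Cl_{\tau} A$ and $\Cl_{\tau} A\subset \Cl A$ separately. The first comes from comparing topologies, the second from a Hahn--Banach separation argument. If $A=\emptyset$ both closures are empty, so I assume $A\neq\emptyset$ throughout. As elsewhere in the paper, the functionals are real-valued, so I work over $\mathbb{R}$.

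For the first inclusion, I note that every $f\in X^*$ is norm-continuous. So every subbasic set $f^{-1}(U)$ of $\sigma(X,X^*)$ is norm-open, and $\tau$ is coarser than the norm topology. Hence every $\tau$-closed set is norm-closed. In particular, $\Cl_{\tau}A$ is a norm-closed set containing $A$, so $\Cl A\subset\Cl_{\tau}A$. Convexity is not needed for this direction.

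For the reverse inclusion, it suffices to show that $\Cl A$ is $\tau$-closed, since it contains $A$. I would proceed as follows.
\begin{enumerate}
\item[(a)] $\Cl A$ is convex. If $x_n\to x$ and $y_n\to y$ with $x_n,y_n\in A$, then $tx_n+(1-t)y_n\to tx+(1-t)y$ for every $t\in[0,1]$.
\item[(b)] Fix $x_0\notin\Cl A$. Since $\Cl A$ is norm-closed, there is $r>0$ with $U_r(x_0)\cap\Cl A=\emptyset$.
\item[(c)] Apply the geometric Hahn--Banach separation theorem to the disjoint convex sets $U_r(x_0)$ (open, nonempty) and $\Cl A$. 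This gives a nonzero $f\in X^*$ and $c\in\mathbb{R}$ with $f(a)\le c\le f(y)$ for all $a\in\Cl A$ and all $y\in U_r(x_0)$.
\item[(d)] Upgrade this to strict separation at $x_0$. Since $f\neq 0$, there is $u$ with $\|u\|<r$ and $f(u)>0$ (for instance a small multiple of a unit vector $z$ with $f(z)>0$). Then $x_0-u\in U_r(x_0)$, so $f(x_0)-f(u)\ge c$ and hence $f(x_0)>c$.
\item[(e)] The set $W=f^{-1}\bigl((c,\infty)\bigr)$ is $\tau$-open by the definition of $\sigma(X,X^*)$. It contains $x_0$ and is disjoint from $\Cl A$.
\end{enumerate}
Thus every point of $X\setminus\Cl A$ has a $\tau$-neighbourhood missing $\Cl A$. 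So $\Cl A$ is $\tau$-closed, and $\Cl_{\tau}A\subset\Cl A$.

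The main obstacle is step (c), the separation theorem itself. I would take it as the standard consequence of the Hahn--Banach theorem: apply the Minkowski functional of the open convex set $U_r(x_0)-\Cl A$, which does not contain $0$, and then check continuity of the resulting functional using the fact that this set contains a ball. Everything else is bookkeeping, except the strictness check in step (d), which needs $f\neq0$; that is exactly why the ball around $x_0$ is separated rather than the single point $x_0$.
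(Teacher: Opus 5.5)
Your proof is correct. The paper gives no proof of its own and only cites Rudin's Theorem~3.12, and you follow that same route: the weak topology is coarser than the norm topology, which gives $\Cl A\subset\Cl_{\tau}A$, and a Hahn--Banach separation of $x_0\notin\Cl A$ from the closed convex set $\Cl A$ by some $f\in X^*$ shows that $X\setminus\Cl A$ is $\tau$-open. The only cosmetic difference is that Rudin invokes strict separation of the compact set $\{x_0\}$ from $\Cl A$ directly, whereas you separate the ball $U_r(x_0)$ from $\Cl A$ and obtain strictness at $x_0$ by hand in step (d), which is how that strict separation theorem is proved anyway.
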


Theorem~\ref{thm:mazur} yields the following direct corollary.

\begin{corollary}\label{cor:equal_0}
Let $X$ be a normed space and $\tau = \sigma(X, X^*)$. Then 
$$\mathcal{P}_{\Cl_{\tau}, \Conv}(X) = \mathcal{P}_{\Cl, \Conv}(X).$$
\end{corollary}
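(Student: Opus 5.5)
The statement to prove is Corollary~\ref{cor:equal_0}: for a normed space $X$ with $\tau=\sigma(X,X^*)$, the families $\mathcal{P}_{\Cl_{\tau},\Conv}(X)$ and $\mathcal{P}_{\Cl,\Conv}(X)$ coincide. The plan is to prove the two inclusions separately. In each direction we take a nonempty convex set $A\subset X$ and apply Mazur's theorem, Theorem~\ref{thm:mazur}, which gives $\Cl_{\tau}A=\Cl A$. Nonemptiness and convexity are the same condition in both families, so only the closedness requirement needs comparing.

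\emph{Inclusion $\mathcal{P}_{\Cl_{\tau},\Conv}(X)\subset\mathcal{P}_{\Cl,\Conv}(X)$.} Let $A$ be nonempty, convex and $\tau$-closed. Then $A=\Cl_{\tau}A$, and Theorem~\ref{thm:mazur} turns this into $A=\Cl A$. Hence $A$ is norm-closed.

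An alternative route, avoiding Mazur here, is to note that $\tau$ is weaker than the norm topology. Every $f\in X^*$ is norm-continuous, so each subbasic $\tau$-open set $f^{-1}(U)$ is norm-open. Therefore every $\tau$-closed set is norm-closed.

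\emph{Inclusion $\mathcal{P}_{\Cl,\Conv}(X)\subset\mathcal{P}_{\Cl_{\tau},\Conv}(X)$.} Let $A$ be nonempty, convex and norm-closed, so $A=\Cl A$. Theorem~\ref{thm:mazur} gives $\Cl_{\tau}A=\Cl A=A$, so $A$ is $\tau$-closed.

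This second inclusion is the only nontrivial point, since a norm-closed set need not be weakly closed in general. Convexity is exactly what makes Mazur's theorem applicable, and that theorem is quoted from the paper. No real obstacle is therefore expected; the proof is just bookkeeping on the definitions of the two families.
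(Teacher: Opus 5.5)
Your proof is correct and matches the paper, which presents this corollary as a direct consequence of Mazur's theorem (Theorem~\ref{thm:mazur}), $\Cl_{\tau}A=\Cl A$ for convex $A$, exactly as you apply it in both directions. Your side remark is also valid: the inclusion ``$\tau$-closed implies norm-closed'' needs no convexity, since $\tau$ is coarser than the norm topology.
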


If $X$ is reflexive, then the canonical mapping is an isometric isomorphism, so $X\cong X^{**}$. In this case, $X$ can be identified with the dual space of $X^*$, i.e., $X^*$ is the predual of $X$. Hence, in that case, the weak$^*$ topology on $X$ is $\sigma(X, X^*)$, i.e., the weak and the weak$^*$ topologies coincide on reflexive spaces. Thus, the following corollary holds.

\begin{corollary}\label{cor:equal}
Let $X$ be a reflexive space, $\tau$ be the weak$^*$ topology on $X$. Then
$$\mathcal{P}_{\Cl_{\tau}, \Conv}(X) = \mathcal{P}_{\Cl, \Conv}(X).$$
\end{corollary}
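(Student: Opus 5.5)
The plan is to reduce Corollary~\ref{cor:equal} to Corollary~\ref{cor:equal_0}. To do that, we show that for reflexive $X$ the weak$^*$ topology $\tau$ on $X$ coincides with the weak topology $\sigma(X,X^*)$. Once the topologies agree, $\tau$-closedness and $\sigma(X,X^*)$-closedness are the same condition on subsets of $X$. Hence the families $\mathcal{P}_{\Cl_{\tau}, \Conv}(X)$ and $\mathcal{P}_{\Cl_{\sigma(X,X^*)}, \Conv}(X)$ are literally equal. Corollary~\ref{cor:equal_0}, which rests on Mazur's Theorem~\ref{thm:mazur}, then identifies the latter family with $\mathcal{P}_{\Cl, \Conv}(X)$.

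The key step is making precise what ``the weak$^*$ topology on $X$'' means. By the paper's convention, $X$ is regarded as a dual space with predual $X^*$ via the canonical isometric isomorphism $J\colon X\to X^{**}$, $x\mapsto e_x$. The weak$^*$ topology on $X^{**}$ is $\sigma(X^{**},X^*)$, the weakest topology making the maps $\xi\mapsto \xi(f)$, $f\in X^*$, continuous. Pull it back through $J$: a subbasic set $\{\xi\colon \xi(f)\in U\}$ becomes $\{x\colon e_x(f)\in U\}=\{x\colon f(x)\in U\}=f^{-1}(U)$. So the pulled-back topology is exactly the one generated by the subbase $\{f^{-1}(U)\colon f\in X^*,\ U\text{ open in }\mathbb{R}\}$, which is $\sigma(X,X^*)$. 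Surjectivity of $J$, i.e.\ reflexivity, is what lets us transport the topology onto all of $X$ and regard $X$ itself as the dual of $X^*$.

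With $\tau=\sigma(X,X^*)$ established, we take any nonempty convex $A\subset X$. The set $A$ is $\tau$-closed if and only if $\Cl_{\tau}A=A$, and by Theorem~\ref{thm:mazur} this holds if and only if $\Cl A=A$. This is exactly the statement of Corollary~\ref{cor:equal_0}, giving the desired equality of the two families.

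The main obstacle is this identification of topologies. It is not deep, but it must be stated carefully. The weak$^*$ topology is defined intrinsically on a dual space, so one has to check that transporting it along the canonical isomorphism gives precisely the subbase of the weak topology on $X$. Note also that the norm on $X\cong X^{**}$ matches the operator norm~\eqref{eq:norm}, since the canonical map is isometric. This keeps the notions of boundedness and the metric unchanged, although only the topologies matter for the equality itself.
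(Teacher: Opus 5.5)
Your proposal is correct and follows the paper's route: reflexivity lets $X$ be identified with the dual of $X^*$, so the weak$^*$ topology on $X$ is $\sigma(X,X^*)$, and the equality then follows from Corollary~\ref{cor:equal_0} (Mazur's theorem). Pulling the subbase back through the canonical map simply spells out the identification of the two topologies, which the paper states in one line.
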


We emphasize that here we are dealing with the coincidence of sets. The Vietoris topology $V_{\tau}$ on $\mathcal{P}_{\Cl_{\tau}, \B, \Conv}(X)$ and the metric topology on $\mathcal{P}_{\Cl, \B, \Conv}(X)$ induced by $d_H$ do not coincide in general.

%%%%%%%%%%%%%%%%%%%%%%%%%%%%%%
\subsection{Lower Semicontinuous Function}\label{sec:semicont}
%%%%%%%%%%%%%%%%%%%%%%%%%%%%%%

\begin{definition}
Let $X$ be a topological space and $f\colon X\rightarrow \mathbb{R}\cup\{-\infty, \infty\}$. The function $f$ is called lower semicontinuous at a point $x\in X$ if for any net $\{x_{\alpha}\}_{\alpha\in \Lambda}\subset X$ converging to $x$, we have $$\liminf\limits_{\alpha} f(x_{\alpha}) \ge f(x).$$
Accordingly, $f$ is called lower semicontinuous on $X$ if it is lower semicontinuous at every point of $X$.
\end{definition}

\begin{proposition}\label{prop:cont_level_0}
Let $X$ be a topological space and $f\colon X\rightarrow \mathbb{R}$. The function $f$ is lower semicontinuous if and only if for every $c\in \mathbb{R}$ the set $\{x\in X\colon f(x) \le c\}$ is closed. 
\end{proposition}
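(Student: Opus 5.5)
We must prove Proposition~\ref{prop:cont_level_0}: $f$ is lower semicontinuous (in the net sense) if and only if every sublevel set $L_c=\{x\in X\colon f(x)\le c\}$ is closed. We treat the two directions separately, using the characterization of closed sets by nets: a set $F$ is closed if and only if every net in $F$ that converges in $X$ has all its limits in $F$.

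For the direction ``lower semicontinuous $\Rightarrow$ sublevel sets closed'', fix $c\in\mathbb{R}$ and a net $\{x_\alpha\}\subset L_c$ converging to some $x\in X$. Lower semicontinuity at $x$ gives $f(x)\le\liminf_\alpha f(x_\alpha)$. Since every $f(x_\alpha)\le c$, the $\liminf$ is at most $c$. Hence $f(x)\le c$, so $x\in L_c$, and $L_c$ is closed.

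For the converse, assume every $L_c$ is closed and suppose, for contradiction, that $f$ is not lower semicontinuous at some $x$. Then some net $x_\alpha\to x$ has $\liminf_\alpha f(x_\alpha)<f(x)$. Since $f$ is real-valued, $f(x)$ is finite, so we can choose a real $c$ with $\liminf_\alpha f(x_\alpha)<c<f(x)$. By the definition $\liminf_\alpha=\sup_{\alpha_0}\inf_{\alpha\ge\alpha_0}$, for every $\alpha_0$ we have $\inf_{\alpha\ge\alpha_0}f(x_\alpha)<c$. So there is some $\alpha\ge\alpha_0$ with $f(x_\alpha)<c$. Thus the index set $\Lambda'=\{\alpha\colon f(x_\alpha)\le c\}$ is cofinal in $\Lambda$.

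A cofinal subset of a directed set is directed with the induced order. The restriction $\{x_\alpha\}_{\alpha\in\Lambda'}$ is therefore a subnet, and it still converges to $x$: every neighborhood of $x$ contains $x_\alpha$ for all $\alpha\ge$ some $\alpha_1$, and any $\alpha\in\Lambda'$ lying above an element of $\Lambda'$ that dominates $\alpha_1$ qualifies. This subnet lies in $L_c$, and $L_c$ is closed, so $x\in L_c$, i.e.\ $f(x)\le c$. This contradicts $c<f(x)$.

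The only mildly delicate step is extracting this cofinal subnet from the $\liminf$ inequality, together with checking that the subnet is directed and convergent. Equivalently, one can avoid subnets: if $x\notin L_c$, then $X\setminus L_c$ is an open neighborhood of $x$. So eventually $f(x_\alpha)>c$, which forces $\liminf_\alpha f(x_\alpha)\ge c$. Letting $c\uparrow f(x)$ gives $\liminf_\alpha f(x_\alpha)\ge f(x)$ directly. I would present this cleaner version.
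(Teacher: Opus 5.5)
Your proposal is correct and, in the cleaner version you say you would present, it coincides with the paper's proof: the paper gets closedness of $\{f\le c'\}$ from $f(x')\le\liminf_\alpha f(x_\alpha)\le c'$ along a net in that set, and for the converse uses that $\{x\colon f(x)>c'\}$ is an open neighbourhood of $x'$ for every $c'<f(x')$, so $\liminf_\alpha f(x_\alpha)\ge c'$. Your preliminary cofinal-subnet contradiction argument is also valid, but the direct route makes it unnecessary.
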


\begin{proof}

Assume that for every $c\in \mathbb{R}$ the set $\{x\in X\colon f(x) \le c\}$ is closed. Fix arbitrary $x'\in X$ and choose $c'\in \mathbb{R}$ such that $f(x') > c'$. In such a case, $U_{c'} = \{x\in X\colon f(x) > c'\}$ is open. Then $x'\in U_{c'}$ and $U_{c'}$ is a neighborhood of $x'$. Let $\{x_{\alpha}\}_{\alpha \in \Lambda}\subset X$ be an arbitrary net converging to $x'$.

By definition, since $x_{\alpha}\rightarrow x'$, for any neighborhood $U$ of $x'$ there exists an index $\alpha'$ such that for all $\beta \ge \alpha'$ we have $x_{\beta}\in U$. In our case $U = U_{c'}$, so for all $\beta \ge \alpha'$ we have $x_{\beta}\in U_{c'}$, i.e., $f(x_{\beta}) > c'$. But then $\inf\limits_{\beta \ge \alpha'} f(x_{\beta}) \ge c'$ and 
$$\sup\limits_{\alpha} \inf\limits_{\beta \ge \alpha} f(x_{\beta}) = \liminf\limits_{\alpha} f(x_{\alpha}) \ge c'.$$ 
Since $c'$ is arbitrary with condition $f(x') > c'$, then we obtain $\liminf\limits_{\alpha} f(x_{\alpha}) \ge f(x')$. And since $x'\in X$ is arbitrary, $f$ is lower semicontinuous on $X$.

Now conversely, suppose $f$ is lower semicontinuous on $X$. We show that $F_c = \{x\in X\colon f(x) \le c\}$ is closed for every $c\in \mathbb{R}$. Fix arbitrary $c'\in \mathbb{R}$. Let $\{x_{\alpha}\}\subset F_{c'}$ be an arbitrary net converging to $x'\in X$. We need to show that $x'\in F_{c'}$, i.e., $f(x')\le c'$. 

By assumption $\liminf\limits_{\alpha} f(x_{\alpha}) \ge f(x')$. But since $x_{\alpha}\in F_{c'}$, we have $f(x_{\alpha})\le c'$ for all $\alpha$. Then $\sup\limits_{\alpha} \inf\limits_{\beta \ge \alpha} f(x_{\beta})\le c'$. Consequently, $f(x')\le \liminf\limits_{\alpha} f(x_{\alpha}) \le c'$. Therefore $x'\in F_{c'}$. Thus $F_{c'}$ contains all its limit points, i.e., it is closed.\qed

\end{proof}

\begin{proposition}\label{prop:semicont_for_dual}
Let $X$ be a normed space and let $\tau$ be a linear Hausdorff topology on $X$ with respect to which each closed ball of $X$ is compact. Then the original metric on $X$ is lower semicontinuous on $X\times X$ with respect to the product topology $\tau\times\tau$.
\end{proposition}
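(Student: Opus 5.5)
By Proposition~\ref{prop:cont_level_0}, lower semicontinuity of $d(x,y)=\|x-y\|$ on $(X\times X,\tau\times\tau)$ is equivalent to closedness of every sublevel set. So the plan is to show that for each $c\in\mathbb{R}$ the set
$$F_c=\bigl\{(x,y)\in X\times X\colon \|x-y\|\le c\bigr\}$$
is $\tau\times\tau$-closed. For $c<0$ the set is empty, so we may assume $c\ge 0$.

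The first step is to reduce the question to the closed ball $B_c(0)$ via the map $\varphi\colon X\times X\to X$, $\varphi(x,y)=x-y$. Since $\tau$ is a linear topology, addition and scalar multiplication are continuous, so $\varphi$ is continuous from $\tau\times\tau$ to $\tau$. Moreover $F_c=\varphi^{-1}\bigl(B_c(0)\bigr)$. Hence it suffices to show that $B_c(0)$ is $\tau$-closed.

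This is where the hypotheses enter. By assumption $B_c(0)$ is $\tau$-compact, and $\tau$ is Hausdorff. A compact subset of a Hausdorff space is closed, so $B_c(0)$ is $\tau$-closed. Therefore $F_c$ is closed in $\tau\times\tau$, and Proposition~\ref{prop:cont_level_0} gives the lower semicontinuity.

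The argument is short, and I expect no real obstacle. The only points needing care are that continuity of subtraction comes from the linearity of $\tau$ (applied to the pair of coordinates), and that $B_c(0)$ for $c\ge 0$ is a genuine closed ball covered by the hypothesis. The case $c=0$ is the singleton $\{0\}$, which is compact and hence closed by Hausdorffness. Alternatively, one could argue with nets: if $(x_\alpha,y_\alpha)\to(x,y)$ with $\|x_\alpha-y_\alpha\|\le c$ along a subnet, then $x_\alpha-y_\alpha\to x-y$ in $\tau$ inside the closed set $B_c(0)$. The sublevel-set route avoids this bookkeeping.
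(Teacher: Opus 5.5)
Your proposal is correct and follows the paper's proof essentially verbatim: $\tau$-compact balls are $\tau$-closed by Hausdorffness, subtraction $(x,y)\mapsto x-y$ is $\tau\times\tau$-continuous by linearity, so the sublevel sets $\varphi^{-1}\bigl(B_c(0)\bigr)$ are closed, and Proposition~\ref{prop:cont_level_0} finishes. Your explicit treatment of the cases $c<0$ (empty sublevel set) and $c=0$ (a singleton) is a small point of care that the paper leaves implicit.
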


\begin{proof}

In a Hausdorff topological space, every compact set is closed. Hence, every closed ball $B_r(0)$ of the space $X$ is $\tau$-closed for all $r$. By the linearity of the topology $\tau$, the map $f(x, y) = x - y$ is continuous on $X\times X$ in the topology $\tau\times \tau$. Note that 
$$f^{-1}\bigl(B_r(0)\bigr) = \bigl\{(x, y)\in X\times X\colon ||x - y|| \le r\bigr\}.$$
Since $f$ is continuous and each $B_r(0)$ is $\tau$-closed, it follows that each set $\bigl\{(x, y)\in X\times X\colon ||x - y|| \le r\bigr\}$ is $(\tau\times\tau)$-closed. By Proposition~\ref{prop:cont_level_0}, it means that the metric on $X$ is lower semicontinuous on $X\times X$ with respect to the product topology $\tau\times\tau$.\qed

\end{proof}

\begin{proposition}\label{prop:Haus_top}
Let $X$ be a metric space, and let a topology $\tau$ be introduced on $X$ such that the metric of the space $X$ is lower semicontinuous on $X\times X$ in the topology $\tau\times \tau$. Then $\tau$ is a Hausdorff topology on $X$.
\end{proposition}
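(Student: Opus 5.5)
Let $x\neq y$ be two points of $X$. Since $X$ is a metric space, we have $\varepsilon:=|x\,y|>0$. The plan is to use the lower semicontinuity of the metric on $X\times X$ to find $\tau$-open neighborhoods $U\ni x$ and $V\ni y$, and then to show that $U$ and $V$ are disjoint.

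\emph{Step 1.} By Proposition~\ref{prop:cont_level_0}, applied to the metric as a function on $(X\times X,\tau\times\tau)$ with $c=\varepsilon/2$, the set
$$
W=\bigl\{(p,q)\in X\times X\colon |p\,q|>\varepsilon/2\bigr\}
$$
is open in $\tau\times\tau$. The point $(x,y)$ lies in $W$. By the definition of the product topology, there are $\tau$-open sets $U\ni x$ and $V\ni y$ with $U\times V\subset W$. In other words, $|p\,q|>\varepsilon/2$ for all $p\in U$ and $q\in V$.

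\emph{Step 2.} We show $U\cap V=\emptyset$. Suppose instead that $z\in U\cap V$. Then $(z,z)\in U\times V\subset W$, which gives $0=|z\,z|>\varepsilon/2$. This is a contradiction, so $U$ and $V$ are disjoint.

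Since $x\neq y$ were arbitrary, $\tau$ separates points and is therefore Hausdorff.

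The only place the metric axiom is used (rather than the pseudometric ones) is $\varepsilon>0$ for $x\neq y$. I expect no real obstacle here. The one point to check carefully is the passage from the open set $W$ to a basic open rectangle $U\times V$ inside it, which is exactly the definition of the product topology.
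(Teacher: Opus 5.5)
Your proof is correct and follows essentially the same route as the paper's: closedness of a sublevel set of the metric via Proposition~\ref{prop:cont_level_0}, then a basic open rectangle $U\times V$ around $(x,y)$, then disjointness of $U$ and $V$. The only difference is that the paper takes $c=0$, so that the closed set is exactly the diagonal and $U\times V$ lies in its complement. This makes your choice $c=\varepsilon/2$ unnecessary, though harmless.
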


\begin{proof}

By Proposition~\ref{prop:cont_level_0}, the lower semicontinuity of the metric means that for any $c\in \mathbb{R}$ the set  
$\bigl\{(x, y)\in X\times X\colon |x\, y|\le c\bigr\}$ is closed in the topology $\tau\times \tau$.  
Let $c = 0$, then the following set is closed in this topology:  
$$\bigl\{(x, y)\in X\times X\colon |x\, y|\le 0\bigr\} = \bigl\{(x, y)\in X\times X\colon |x\, y| = 0\bigr\} = \bigl\{(x, x)\colon x\in X\bigr\}.$$  
Hence, $W = X\setminus \bigl\{(x, x)\colon x\in X\bigr\}$ is open in the topology $\tau\times \tau$.  
Therefore, for any two distinct points $x, y\in X$, there exist $\tau$-open sets $U$ and $V$ such that $(x, y)\in U\times V\subset W$.  
Consequently, $U\times V\cap \bigl\{(x, x)\colon x\in X\bigr\} = \emptyset$. This means that there is no point $z\in X$ such that $z\in U\cap V$. Hence, $U\cap V = \emptyset$.  
Thus, $\tau$ is a Hausdorff topology on $X$.\qed

\end{proof}

The following result holds.

\begin{theorem}[{\cite[Theorem~1]{Dixmier}}]\label{thm:Dixmier}
Let $X$ be a normed space such that there exists a Hausdorff locally convex topology $\tau$ on $X$ with respect to which the closed unit ball of $X$ is compact. Then $X$ is a dual space.
\end{theorem}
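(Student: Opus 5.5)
The plan is to realize $X$ isometrically as the dual of a concrete normed space of functionals built from $\tau$. Let $B$ be the closed unit ball of $X$. Define $E$ to be the set of all linear functionals $f$ on $X$ whose restriction to $B$ is $\tau$-continuous. Each such $f$ is bounded on the $\tau$-compact set $B$, so $E\subset X^*$. Moreover $E$ is a linear subspace, and we equip it with the operator norm~\eqref{eq:norm}. Every $\tau$-continuous linear functional lies in $E$. We will show that the canonical map $J\colon X\to E^*$, $J(x)(f)=f(x)$, is a surjective linear isometry, so that $X$ is a dual space, namely the dual of $E$.

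\textbf{Isometry.} Clearly $|J(x)(f)|\le\|f\|\,\|x\|$, hence $\|J(x)\|\le\|x\|$. For the reverse inequality, it suffices to show that $\|J(x)\|\le 1$ implies $x\in B$. Suppose $x\notin B$. Since $\tau$ is Hausdorff, the $\tau$-compact set $B$ is $\tau$-closed, and it is convex. Hence Hahn--Banach separation in the locally convex space $(X,\tau)$ gives a $\tau$-continuous linear $f$ with $f(x)>\sup_{b\in B} f(b)=\|f\|$. This $f$ lies in $E$, and so $J(x)(f)>\|f\|$, i.e.\ $\|J(x)\|>1$. Applying this to $x/\|J(x)\|$ gives $\|x\|=\|J(x)\|$ by homogeneity.

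\textbf{Surjectivity.} This is the main step. Consider $J$ restricted to $B$, viewed as a map into $\bigl(E^*,\sigma(E^*,E)\bigr)$. For each $f\in E$ the map $x\mapsto J(x)(f)=f(x)$ is $\tau$-continuous on $B$ by the definition of $E$. Hence $J|_B$ is continuous from $(B,\tau)$ into the weak$^*$ topology, and therefore $J(B)$ is weak$^*$-compact; it is also convex and balanced. Now take $\varphi\in E^*$ with $\|\varphi\|\le1$ and suppose $\varphi\notin J(B)$. By Theorem~\ref{thm:loc_conv} and Corollary~\ref{cor:Haus_loc}, the weak$^*$-continuous functionals on $E^*$ are exactly the evaluations at elements of $E$. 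Separating $\varphi$ from the weak$^*$-closed convex balanced set $J(B)$ then yields $f\in E$ with
$$\varphi(f)>\sup_{x\in B}|f(x)|=\|f\|,$$
which contradicts $\|\varphi\|\le1$. Thus the unit ball of $E^*$ equals $J(B)$, and by scaling $J$ is onto.

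The delicate point I expect is precisely this surjectivity step. One must check that continuity of each $f\in E$ only on $B$, rather than on all of $X$, is enough to make $J|_B$ weak$^*$-continuous. It is, since weak$^*$ convergence is tested one functional at a time. This is exactly why $E$ is defined through continuity on $B$ instead of as the $\tau$-dual of $X$. The isometry step, by contrast, only needs genuinely $\tau$-continuous functionals, which are supplied by local convexity of $\tau$.
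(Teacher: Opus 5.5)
The paper gives no proof of this statement. It quotes the result from \cite[Theorem~1]{Dixmier} and uses it only as a black box in Theorem~\ref{thm:dual}.

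Your argument is a correct, self-contained proof, and it is the standard one behind Dixmier's theorem. You take as predual $E=\{f\colon f|_B \text{ is } \tau\text{-continuous}\}\subset X^*$. The isometry comes from separating a point of $X\setminus B$ from the $\tau$-closed, convex, balanced set $B$ by a $\tau$-continuous functional. Surjectivity comes from $J(B)$ being a weak$^*$-compact, convex, balanced subset of the unit ball of $E^*$: any point $\varphi$ of that ball outside $J(B)$ could be separated from it by evaluation at some $f\in E$, forcing $\varphi(f)>\|f\|$.

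Compared with the bare citation, your route gives an explicit predual and a useful byproduct. The map $J|_B$ is a continuous injection of the compact space $(B,\tau)$ into the Hausdorff space $\bigl(E^*,\sigma(E^*,E)\bigr)$, hence a homeomorphism onto its image. So $\tau$ and the weak$^*$ topology $\sigma(X,E)$ induce the same topology on $B$, and by translation and scaling on every closed ball. This complements the last sentence of Theorem~\ref{thm:dual}: any topology satisfying its two conditions agrees on balls with a genuine weak$^*$ topology.

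One small correction to your closing remark: defining $E$ through continuity on $B$ is not essential. The $\tau$-dual $(X,\tau)^*$ would work equally well in the surjectivity step, since its elements are a fortiori $\tau$-continuous on $B$. In fact $(X,\tau)^*$ is norm-dense in your $E$. A nonzero element of $E^*$ vanishing on $(X,\tau)^*$ would be $J(x)$ for some $x\neq 0$ annihilated by every $\tau$-continuous functional, which is impossible because $\tau$ is Hausdorff and locally convex.
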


Thus, from Proposition~\ref{prop:semicont_for_dual}, Proposition~\ref{prop:Haus_top}, and Theorem~\ref{thm:Dixmier}, we obtain the following criterion for a dual space.

\begin{theorem}\label{thm:dual}
A normed space $X$ is a dual space if and only if there exists a locally convex topology $\tau$ on $X$ such that
\begin{itemize}
\item each closed ball of $X$ is $\tau$-compact;
\item the metric of $X$ is lower semicontinuous on $X\times X$ with respect to the product topology $\tau\times \tau$.
\end{itemize}
If $X$ is dual, then the weak$^*$ topology can be taken as such a topology $\tau$.
\end{theorem}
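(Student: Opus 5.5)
The proof splits into the two directions of the equivalence. The paper introduces the statement as following from Proposition~\ref{prop:semicont_for_dual}, Proposition~\ref{prop:Haus_top} and Theorem~\ref{thm:Dixmier}, and the plan assembles exactly these three results.

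\emph{Sufficiency.} Suppose a locally convex topology $\tau$ on $X$ makes every closed ball $\tau$-compact and makes the metric lower semicontinuous on $X\times X$ in $\tau\times\tau$. By Proposition~\ref{prop:Haus_top}, the lower semicontinuity of the metric already forces $\tau$ to be Hausdorff. So $\tau$ is a Hausdorff locally convex topology in which, in particular, the closed unit ball $B_1(0)$ is compact. Theorem~\ref{thm:Dixmier} then gives that $X$ is a dual space.

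\emph{Necessity and the final sentence.} Suppose $X=Y^*$ for some normed space $Y$, with the operator norm~\eqref{eq:norm}, and take $\tau=\sigma(Y^*,Y)$. The conditions are checked in four steps.
\begin{itemize}
\item By Corollary~\ref{cor:Haus_loc}, $\tau$ is locally convex and Hausdorff.
\item $\tau$ is a linear topology, being a locally convex topology on a linear space.
\item By Theorem~\ref{thm:Ban_Ala}, every closed ball of $X$ is $\tau$-compact.
\item With the previous three facts in hand, Proposition~\ref{prop:semicont_for_dual} applies and yields the lower semicontinuity of the metric on $X\times X$ with respect to $\tau\times\tau$.
\end{itemize}
This shows that the weak$^*$ topology itself can serve as $\tau$, which is the last assertion of the theorem.

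\emph{Main obstacle.} The only delicate point is bookkeeping about what ``dual space'' means in Theorem~\ref{thm:Dixmier}. It must mean that $X$ is isometrically isomorphic to $Y^*$ with the operator norm, so that the norm in the criterion coincides with the norm~\eqref{eq:norm}. Only then can the Banach--Alaoglu theorem be applied to the original balls of $X$ in the necessity direction. I would state this identification explicitly, transporting $\tau$ through the isometry if needed. The remaining verifications are immediate.
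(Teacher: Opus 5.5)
Your proposal is correct and matches the paper's proof essentially step for step. Sufficiency goes through Proposition~\ref{prop:Haus_top} (Hausdorffness from lower semicontinuity) followed by Theorem~\ref{thm:Dixmier}, and necessity applies Theorem~\ref{thm:Ban_Ala}, Corollary~\ref{cor:Haus_loc} and Proposition~\ref{prop:semicont_for_dual} to the weak$^*$ topology; your point about identifying $X$ isometrically with $Y^*$ is already covered by the paper's standing convention that dual spaces carry the operator norm~\eqref{eq:norm}.
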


\begin{proof}

Let $X$ be a dual space. Introduce on $X$ the weak$^*$ topology $\tau$. By Theorem~\ref{thm:Ban_Ala}, each closed ball in $X$ is weak$^*$-compact. By Corollary~\ref{cor:Haus_loc}, the weak$^*$ topology is a Hausdorff locally convex linear topology. Therefore, by Proposition~\ref{prop:semicont_for_dual}, the metric of $X$ is lower semicontinuous on $X\times X$ with respect to the product topology $\tau\times \tau$.

Now conversely, let there exist a locally convex topology $\tau$ on $X$ such that
\begin{itemize}
\item[(1)] each closed ball of $X$ is $\tau$-compact;
\item[(2)] the metric of $X$ is lower semicontinuous on $X\times X$ with respect to the product topology $\tau\times \tau$.
\end{itemize}
By Proposition~\ref{prop:Haus_top}, the item (2) implies that $\tau$ is a Hausdorff topology. But then by Theorem~\ref{thm:Dixmier}, the space $X$ is a dual space.\qed

\end{proof}

%%%%%%%%%%%%%%%%%%%%%%%%%%%%%%
\subsection{Complemented Space}\label{sec:complemented}
%%%%%%%%%%%%%%%%%%%%%%%%%%%%%%

In this subsection, we provide some definitions and results from~\cite{Rudin} which will be useful for us later. Namely, in this subsection, we show for Banach spaces $X$ and $Y$ that the existence of a projector $P\colon Y\rightarrow X$ of norm $1$ is equivalent to the space $X$ being $1$-complemented in $Y$.

\begin{definition}
A metric on a linear space $X$ is called invariant if for all $x, y, z\in X$ one has $|x+z\,\, y+z| = |x\, y|$.
\end{definition}

\begin{definition}
Let $X$ be a Hausdorff topological linear space. If the topology of $X$ is induced by a complete invariant metric, then $X$ is called an $F$-space.
\end{definition}

\begin{definition}\label{dfn:comp}
Let $M$ be a closed subspace of a Hausdorff topological linear space $X$. If there exists a closed subspace $N$ of $X$ such that
$$X = M + N$$
and
$$M\cap N = \{0\},$$
then $M$ is said to be complemented in $X$. In this case one says that $X$ is the direct sum of $M$ and $N$, and the notation $X = M \oplus N$ is used.
\end{definition}

\begin{remark}
A complemented subspace in a complete metric space is complete due to closedness.
\end{remark}

\begin{definition}
Let $X$ be a linear space. A linear map $P\colon X\rightarrow X$ is called a projection $($or projector$)$ in $X$ if $P^2 = P$, i.e., if $P(Px)= Px$ for all $x\in X$.
\end{definition}

\begin{theorem}[{\cite[Theorem~5.16]{Rudin}}]\label{thm:rud}
\begin{itemize}
\item[ ]
\item[$($a$)$] If $P$ is a continuous projection in a Hausdorff linear topological space $X$, then $X = \Im P \oplus \Ker P.$
\item[$($b$)$] Conversely, if $X$ is an $F$-space and if $X = A \oplus B$, then the projection $P$ with $\Im P = A$ and $\Ker P = B$ is continuous.
\end{itemize}
\end{theorem}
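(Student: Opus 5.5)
The statement is Rudin's Theorem~5.16, with two parts. Part (a) is purely algebraic plus one closedness argument. Part (b) is an application of the closed graph theorem for $F$-spaces.

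For (a), let $P$ be a continuous projection in the Hausdorff space $X$ and write $x = Px + (x - Px)$. Since $P(x-Px) = Px - P^2x = 0$, the second summand lies in $\Ker P$, so $X = \Im P + \Ker P$. If $y \in \Im P \cap \Ker P$, then $y = Pz$ for some $z$, and hence $y = Pz = P^2 z = Py = 0$, so the intersection is $\{0\}$.

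It remains to check that both subspaces are closed, as Definition~\ref{dfn:comp} requires.
\begin{itemize}
\item $\Ker P = P^{-1}(\{0\})$. Points are closed because $X$ is Hausdorff, and $P$ is continuous, so $\Ker P$ is closed.
\item $\Im P = \Ker(I - P)$. Indeed, $Px = x$ holds exactly on $\Im P$: if $x = Pz$ then $Px = P^2 z = x$, and conversely $Px = x$ puts $x$ in $\Im P$. Since $I - P$ is continuous, $\Im P$ is closed by the same argument.
\end{itemize}

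For (b), let $X = A \oplus B$ with $A, B$ closed, and let $P$ be the projection with $\Im P = A$ and $\Ker P = B$. Each $x$ decomposes uniquely as $x = a + b$ with $a \in A$ and $b \in B$, and we set $Px = a$. Then $P$ is linear and satisfies $P^2 = P$. Since $X$ is an $F$-space, I would invoke the closed graph theorem for linear maps between $F$-spaces, so it suffices to show the graph of $P$ is closed. Because $X$ is metrizable, it is enough to work with sequences.

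Suppose $x_n \to x$ and $Px_n \to y$. Then:
\begin{itemize}
\item $Px_n \in A$ and $A$ is closed, so $y \in A$.
\item $x_n - Px_n \in B$, and $x_n - Px_n \to x - y$; since $B$ is closed, $x - y \in B$.
\end{itemize}
So $x = y + (x - y)$ with $y \in A$ and $x - y \in B$. By uniqueness of the decomposition, $Px = y$, the graph is closed, and $P$ is continuous.

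The main obstacle is the closed graph step in (b). It requires completeness and an invariant metric so that the Baire category argument works, which is exactly why the $F$-space hypothesis appears. I would cite the closed graph theorem rather than reprove it. Everything else is routine linear algebra and closedness bookkeeping.
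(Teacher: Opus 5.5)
Your proof is correct and follows essentially the standard argument of Rudin's Theorem~5.16, which the paper cites without reproducing a proof. For (a) you get closedness of $\Ker P$ and $\Im P=\Ker(I-P)$ from continuity and the Hausdorff property. For (b) you use the closed graph theorem for $F$-spaces, with the sequential check that $x_n\to x$ and $Px_n\to y$ force $y\in A$ and $x-y\in B$, hence $Px=y$.
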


\begin{corollary}\label{cor:rud}
A subspace of an $F$-space $X$ is complemented in $X$ if and only if it is the image of a continuous projection $P$ in $X$.
\end{corollary}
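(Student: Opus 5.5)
The statement to prove is Corollary~\ref{cor:rud}: a subspace $M$ of an $F$-space $X$ is complemented in $X$ if and only if $M=\Im P$ for some continuous projection $P$ in $X$. Both directions should follow almost directly from the two parts of Theorem~\ref{thm:rud}. The only extra work is checking the small hypotheses: that an $F$-space is a Hausdorff linear topological space, and that the subspaces produced are closed, as Definition~\ref{dfn:comp} requires.

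\emph{If direction.} Suppose $P$ is a continuous projection in $X$ with $M=\Im P$. By definition an $F$-space is a Hausdorff topological linear space, so part~(a) of Theorem~\ref{thm:rud} applies and gives $X=\Im P\oplus\Ker P$. To see that this is a direct sum in the sense of Definition~\ref{dfn:comp}, I would check closedness of both pieces directly.
\begin{itemize}
\item $\Ker P=P^{-1}(\{0\})$ is closed, because $P$ is continuous and points are closed in a Hausdorff space.
\item $\Im P=\Ker(I-P)$. Indeed, $x=Px$ holds exactly when $x\in\Im P$, using $P^2=P$. Since $I-P$ is continuous, $\Im P$ is closed as well.
\end{itemize}
Hence $M=\Im P$ is a closed subspace with closed complement $\Ker P$, so $M$ is complemented.

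\emph{Only if direction.} Suppose $M$ is complemented, so there is a closed subspace $N$ with $X=M+N$ and $M\cap N=\{0\}$. Each $x\in X$ then has a unique decomposition $x=m+n$ with $m\in M$ and $n\in N$; uniqueness comes from $M\cap N=\{0\}$. Define $P(x)=m$. This map is linear, satisfies $P^2=P$, and has $\Im P=M$ and $\Ker P=N$. Because $X$ is an $F$-space, part~(b) of Theorem~\ref{thm:rud} says this projection is continuous. So $M$ is the image of a continuous projection.

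The argument is mostly bookkeeping. The only point needing care is the closedness of $\Im P$ in the first direction, which the identity $\Im P=\Ker(I-P)$ settles. Part~(a) is usually stated as already giving a topological direct sum, in which case this check is automatic anyway.
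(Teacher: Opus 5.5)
Your proof is correct and follows the paper's route: the paper states this corollary as an immediate consequence of parts (a) and (b) of Theorem~\ref{thm:rud}, exactly as you apply them. Your extra closedness check via $\Im P=\Ker(I-P)$ is valid but redundant, since the notation $\oplus$ in Definition~\ref{dfn:comp} already requires both summands to be closed, as you yourself note.
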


Thus, in the context of $F$-spaces, by Corollary~\ref{cor:rud}, the property of being complemented is equivalent to the existence of a continuous projection. This fact will be used later in the setting of normed spaces. Note that if an $F$-space is normed, then it is automatically a Banach space by definition. Thus, for Banach spaces, we will use the following equivalent definition.

\begin{definition}\label{dfn:lambda}
A Banach subspace of a Banach space $X$ is said to be complemented in $X$ if it is the image of a continuous projection $P$ in $X$. If, moreover, $||P||\le \lambda$, then this Banach subspace is said to be $\lambda$-complemented in $X$.
\end{definition}

Further, let $M$ be a complemented subspace in a Banach space $X$. Then by definition there exists a continuous projection $P$ in $X$ such that $\Im P = M$. Let $x\in \Im P = M$ and $x\neq 0$. There exists $y\in X$ such that $x = Py$. By the idempotent property we have:
$$x = Py = P(Py) = Px.$$
Hence, $Px = x$ for all $x\in M$. Moreover, note that
$$||x|| = ||Px|| \le ||P|| \cdot ||x||,$$
and therefore
$$1 \le ||P||.$$
Thus, in the case of Banach spaces, if $\lambda = 1$ in Definition~\ref{dfn:lambda}, then the norm of the projector $P$ becomes equal to $1$, and the corresponding Banach subspace is called $1$-complemented. So, we obtain the following proposition which will be useful for us later.

\begin{proposition}\label{prop:one_comp}
A Banach space $X$ is $1$-complemented in a Banach space $Y$ if and only if there exists a projector $P\colon Y\rightarrow X$ such that $||P|| = 1$.
\end{proposition}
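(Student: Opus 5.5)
The statement is Proposition~\ref{prop:one_comp}: a Banach space $X$ is $1$-complemented in a Banach space $Y$ if and only if there is a projector $P\colon Y\to X$ with $\|P\|=1$. I would prove it by unwinding Definition~\ref{dfn:lambda} with $\lambda=1$ and combining it with the lower bound $\|P\|\ge 1$ obtained just before the statement. Throughout, $X$ is regarded as a closed subspace of $Y$, and a projector $P\colon Y\to X$ means a continuous linear $P\colon Y\to Y$ with $P^2=P$ and $\Im P=X$.

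For the direction ``$\Rightarrow$'', assume $X$ is $1$-complemented in $Y$. By Definition~\ref{dfn:lambda}, there is a continuous projection $P$ in $Y$ with $\Im P=X$ and $\|P\|\le 1$. The computation preceding the proposition shows that $Px=x$ for every $x\in\Im P$. Taking any $x\neq 0$ in $X$ then gives $\|x\|=\|Px\|\le\|P\|\,\|x\|$, so $\|P\|\ge 1$, and therefore $\|P\|=1$.

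Here one point needs care: the inequality $\|P\|\ge 1$ requires a nonzero vector in $\Im P$. If $X=\{0\}$, then $P=0$ and $\|P\|=0$, and the argument breaks down. I would handle this degenerate case by excluding the trivial subspace explicitly, which matches the paper's implicit choice of $x\neq 0$. Alternatively, one could read ``norm $1$'' as ``norm $\le 1$'' in that case.

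For the direction ``$\Leftarrow$'', suppose $P\colon Y\to X$ is a projector with $\|P\|=1$. Then $P$ is bounded, hence continuous, and it is a projection in $Y$ with image $X$. Since $\|P\|\le 1$, Definition~\ref{dfn:lambda} with $\lambda=1$ says exactly that $X$ is $1$-complemented in $Y$. If one also wants the direct-sum form, Corollary~\ref{cor:rud} and Theorem~\ref{thm:rud}(a) give $Y=X\oplus\Ker P$, so $X$ is complemented in the sense of Definition~\ref{dfn:comp} as well.

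There is no substantive obstacle here. The only subtlety is the trivial-subspace case noted above, together with checking that $\Im P=X$ is closed, which holds automatically because $X$ is a Banach subspace.
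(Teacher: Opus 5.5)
Your proof is correct and matches the paper's argument: you unwind Definition~\ref{dfn:lambda} with $\lambda=1$, use $Px=x$ on $\Im P$ to force $\|P\|\ge 1$, and note that the converse is immediate from the definition. Your remark on the trivial case $X=\{0\}$ is apt, because there the only projector is $P=0$ and the ``only if'' direction genuinely fails; the paper excludes this case without comment by choosing $x\neq 0$.
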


%%%%%%%%%%%%%%%%%%%%%%%%%%%%%%
\subsection{Minimal Networks}
%%%%%%%%%%%%%%%%%%%%%%%%%%%%%%

In this subsection, we recall the necessary concepts from graph theory and fix the corresponding notation. More detailed information on graph theory can be found, for example, in~\cite{Graphs}.

\begin{definition}
A simple graph is a pair $(V, E)$ consisting of a finite set $V$ and a set $E$ of two-element subsets $\{u, v\} \subset V$. The elements of $V$ are called vertices of the graph, and the elements of $E$ are called edges of the graph. If $e = \{u, v\} \in E$, then the vertices $u$ and $v$ are said to be adjacent and are joined by the edge $e$.
\end{definition}

Since we will only consider simple graphs henceforth, the word ``simple'' will be omitted.

Henceforth, we will only consider undirected graphs, so we look at each edge $\{u, v\}\in E$ as an unordered pair of vertices, that is, we assume that $\{u, v\}$ and $\{v, u\}$ are the same edge.

We also emphasize that each edge is precisely a two-element set, that is, if $\{u, v\}$ is an edge of a graph, then $u\neq v.$

\begin{definition}
Let $G = (V, E)$ be a connected graph, and let $\mathcal{A} \subset V$. Then $G$ is said to join $\mathcal{A}$. The vertices in $\mathcal{A}$ are called boundary vertices, the set $\mathcal{A}$ itself is called the boundary of the graph, and the vertices in $V \setminus \mathcal{A}$ are called interior vertices.
\end{definition}

Thus, the boundary of a graph is some distinguished and fixed subset of its vertices. We will henceforth denote the boundary of the graph $G$ by $\partial G$.

\begin{definition} 
Let $G = (V, E)$ be a connected graph. A parametric network of type $G$ in a pseudometric space $X$ is an arbitrary mapping $g\colon V\rightarrow X$. The graph $G$ is called the parameterizing graph of $g$.
\end{definition}

Note that the mapping $g$ induces a corresponding mapping on the edges of the graph $G$ by the rule $g_e\colon \{v_i, v_j\} \mapsto \bigl\{g(v_i), g(v_j)\bigr\}$.

\begin{definition}
The elements of $g(V)$ are called vertices of the network, and the elements of $g_e(E)$ are called edges of the network.
\end{definition}

\textit{The length of an edge} $\bigl\{g(v_i), g(v_j)\bigr\}$ of the network $g$ is defined as the distance $\bigl|g(v_i)\, g(v_j)\bigr|$, and \textit{the length of the network} $g$ is defined as the sum of the lengths of all its edges:
$$|g| := \sum\limits_{\{v_i, v_j\}\in E} \bigl|g(v_i)\, g(v_j)\bigr|.$$

Let $\partial G$ be a subset of a pseudometric space $X$. We shall assume that in this case all networks $g$ satisfy the following condition: \textbf{the restriction of {\boldmath $g$} to {\boldmath $\partial G$} is the identity mapping}. Thus, each such network $g$ is uniquely determined by the images of the interior vertices of the parameterizing graph $G$.

\begin{definition}
Let a connected graph $G_1$ parameterizing a network $g_1$ and a connected graph $G_2$ parameterizing a network $g_2$ be given and $\mathcal{A} = \partial G_1 = \partial G_2$. These networks are said to be of the same type (say, $G_1$) if there exists an isomorphism between the graphs $G_1$ and $G_2$ that is the identity on $\mathcal{A}$.
\end{definition}

For a fixed boundary $\mathcal{A}$ in a pseudometric space $X$, consider a connected graph $G = (V, E)$ joining $\mathcal{A}$, that is $\partial G = \mathcal{A}\subset X$. Denote by $[G, \mathcal{A}]$ the set of all networks of this type $G$ in the pseudometric space $X$. By the convention made above, all networks $g\in [G, \mathcal{A}]$ satisfy the condition $g\big|_{\partial G = \mathcal{A}} = \text{id}$. 

\begin{definition}\label{dfn:mpn}
A network from $[G, \mathcal{A}]$ that has the smallest possible length among all networks in $[G, \mathcal{A}]$ is called a minimal parametric network of type $G$ joining $\mathcal{A}$.
\end{definition}

A minimal parametric network does not always exist. Nevertheless, the infimum of the quantities $|g|$ over all networks $g\in [G, \mathcal{A}]$ always exists. 

\begin{definition}
The quantity $$\mpn[G, \mathcal{A}] = \inf\limits_{g\in [G, \mathcal{A}]} |g|$$ is called the \textit{length of the minimal parametric network.}
\end{definition}

\begin{definition}
\textit{The length of the minimal Steiner tree} joining a subset $\mathcal{A}$ of a pseudometric space $X$ is the value
$$\smt(\mathcal{A}) = \inf\limits_{G\colon \partial G = \mathcal{A}} \mpn[G, \mathcal{A}].$$
\end{definition}

The number of elements in a set $M$ is denoted by $\# M$.

\begin{definition}\label{dfn:mst}
Let $X$ be a pseudometric space, let a graph $G = (V, E)$ join $\mathcal{A}\subset X$, let $g\in [G, \mathcal{A}]$ and $|g| = \smt(\mathcal{A})$. If there does not exist a graph $G' = (V', E')$ joining $\mathcal{A}$ and a network $g'\in [G', \mathcal{A}]$ such that $|g'| = \smt(\mathcal{A})$ and $\# (V'\setminus \mathcal{A}) < \# (V\setminus \mathcal{A})$, then $g$ is called a \textit{minimal Steiner tree joining} $\mathcal{A}$.
\end{definition}

In the case of a metric space, the following two propositions can be found, for example, in~\cite{Malaysia}, Proposition~9 and~10. However, there they are proved for the case when $X$ is a metric space. But the proofs for the case of pseudometric spaces carry over verbatim, so we omit them.

\begin{proposition}
Let $X$ be a pseudometric space, a graph $G$ join $\mathcal{A}\subset X$ and $g\in [G, \mathcal{A}]$ be a minimal Steiner tree joining $\mathcal{A}$. Then any interior vertex of $G$ has at least $3$ adjacent vertices.
\end{proposition}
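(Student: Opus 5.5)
The plan is to argue by contradiction, using the minimality built into Definition~\ref{dfn:mst}. Let $g\in[G,\mathcal{A}]$ be a minimal Steiner tree, so $|g|=\smt(\mathcal{A})$ and no network of length $\smt(\mathcal{A})$ has fewer interior vertices. Suppose some interior vertex $v\in V\setminus\mathcal{A}$ has degree at most $2$. I will build a graph $G'$ joining $\mathcal{A}$ with exactly one fewer interior vertex, together with a network $g'\in[G',\mathcal{A}]$ satisfying $|g'|\le|g|$. Since $|g'|\ge\smt(\mathcal{A})$ by definition of $\smt$, this gives $|g'|=\smt(\mathcal{A})$, which contradicts the minimality of $\#(V\setminus\mathcal{A})$. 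The only tool needed is the triangle inequality; symmetry and the other pseudometric axioms are not used beyond that, so nothing depends on $|x\,y|=0\Rightarrow x=y$.

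There are three cases for the degree of $v$. Degree $0$ can only occur if $V=\{v\}$, because $G$ is connected. Then $\mathcal{A}=\emptyset$, and I treat this degenerate case as excluded or trivial. If $v$ has degree $1$, delete $v$ and its single edge. The graph $G'=G-v$ stays connected, since a leaf is never a cut vertex, and it still contains $\mathcal{A}$. Take $g'=g|_{V\setminus\{v\}}$; then $|g'|=|g|-|g(v)\,g(u)|\le|g|$.

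If $v$ has degree $2$ with neighbours $u,w$, delete $v$ and, if $\{u,w\}\notin E$, add the edge $\{u,w\}$. Connectivity is preserved because any path through $v$ can be rerouted along $u$--$w$. The graph stays simple because $u\ne w$. Again take $g'=g|_{V\setminus\{v\}}$. The triangle inequality gives $|g(u)\,g(w)|\le|g(u)\,g(v)|+|g(v)\,g(w)|$, so $|g'|\le|g|$. If $\{u,w\}$ was already an edge, the bound is immediate, since we only removed edges.

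The main point needing care is that the new pair $(G',g')$ is admissible in Definition~\ref{dfn:mst}. We need $\partial G'=\mathcal{A}$, $g'$ equal to the identity on $\mathcal{A}$, $G'$ connected and simple, and $\#(V'\setminus\mathcal{A})=\#(V\setminus\mathcal{A})-1$. All of these follow directly from the constructions above, since only the interior vertex $v$ was removed. The remaining step is the inequality chain $\smt(\mathcal{A})\le|g'|\le|g|=\smt(\mathcal{A})$, which completes the contradiction.
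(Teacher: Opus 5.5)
Your argument is correct and is the standard one the paper has in mind (the paper omits the proof, noting that the metric-space proof carries over verbatim to pseudometric spaces): you delete an interior leaf, or splice out an interior vertex of degree $2$ by joining its two neighbours. The triangle inequality then yields a network in some $[G',\mathcal{A}]$ of length $\smt(\mathcal{A})$ with one fewer interior vertex, contradicting Definition~\ref{dfn:mst}.

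The only point to tighten is the degree-$0$ case. It forces $\mathcal{A}=\emptyset$, where the statement may actually fail (the edgeless one-vertex graph), so this case should be excluded by assuming $\mathcal{A}\neq\emptyset$, as the paper tacitly does, rather than dismissed as trivial.
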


\begin{proposition}\label{prop:int_vert}
Let $X$ be a pseudometric space, a graph $G = (V, E)$ join $\mathcal{A}\subset X$, $\# \mathcal{A} = n$ and $g\in [G, \mathcal{A}]$ be a minimal Steiner tree joining $\mathcal{A}$. Then the graph $G$ has at most $n-2$ interior vertices.
\end{proposition}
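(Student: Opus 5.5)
The plan is to reduce to a tree and then contract edges, following the standard argument in metric spaces, which uses only the triangle inequality and therefore carries over to pseudometrics. Write $V = \mathcal{A}\cup I$, where $I$ is the set of interior vertices and $k=\# I$.

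\textbf{Step 1: $G$ is a tree.} First I will show that $G$ may be assumed to be a tree. If $G$ contained a cycle, deleting an edge of it would keep the graph connected without changing the vertex set. The length of the network would not increase. Interior vertices that became leaves could then be removed one at a time; each removal deletes a nonnegative edge length and lowers the number of interior vertices, which contradicts the minimality of $\#(V\setminus\mathcal{A})$ if the count drops. So the edge-deletion procedure yields a tree network of the same length $\smt(\mathcal{A})$ with the same interior vertex set, since lengths are nonnegative and the length cannot fall below $\smt(\mathcal{A})$. Strictly, the vertex count is what matters, so I will simply pass to a spanning tree $T$ of $G$ with the same vertex set. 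Its network length is at most $|g|$, hence exactly $\smt(\mathcal{A})$, and it has the same number $k$ of interior vertices.

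\textbf{Step 2: every interior vertex has degree at least $3$ in $T$.} I apply the preceding proposition to the network on $T$. This network is itself a minimal Steiner tree, since it realizes $\smt(\mathcal{A})$ with the same number $k$ of interior vertices, and that $k$ is minimal by hypothesis. Hence every interior vertex has degree at least $3$ in $T$. If that proposition is intended only for $G$, I will use the mechanism behind it directly instead. A degree-$1$ interior vertex can be deleted. A degree-$2$ interior vertex $v$ with neighbours $u,w$ can be replaced by the edge $\{u,w\}$: since $|g(u)\,g(w)|\le|g(u)\,g(v)|+|g(v)\,g(w)|$, the length does not increase. If $u,w$ were already adjacent, the graph stays simple and the length only decreases. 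Either operation reduces $k$ without exceeding $\smt(\mathcal{A})$, which is a contradiction.

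\textbf{Step 3: count degrees.} In the tree $T$ with $n+k$ vertices there are $n+k-1$ edges, so
$$2(n+k-1)=\sum_{v}\deg v\ \ge\ \sum_{a\in\mathcal{A}}\deg a+3k\ \ge\ n+3k.$$
This uses the fact that every boundary vertex has degree at least $1$ (when $n+k\ge2$; the degenerate case $n=1$ forces $k=0$ by the same contraction argument). The inequality gives $k\le n-2$.

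The main obstacle is the bookkeeping in Steps 1 and 2. After deleting or contracting, the result must still be a simple connected graph joining $\mathcal{A}$, so duplicate edges have to be merged. One must also check that the length cannot drop below $\smt(\mathcal{A})$, which is automatic by the definition of $\smt$.
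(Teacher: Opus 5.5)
Your argument is correct and is the standard one the paper relies on; the paper omits the proof, saying the metric-space argument from the cited source carries over verbatim. That argument is: pass to a spanning tree that still realizes $\smt(\mathcal{A})$ with the same interior vertices, observe that it is again a minimal Steiner tree so every interior vertex has degree at least $3$, and conclude by the degree count $2(n+k-1)\ge n+3k$. One caveat: for $n=1$ the literal bound fails, since $k=0>n-2$, so the statement tacitly assumes $n\ge 2$. Your parenthetical on the degenerate case already finds $k=0$ there, but you then claim $k\le n-2$ without noting this exception.
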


\begin{remark}\label{rk:finite_set}
By Proposition~\ref{prop:int_vert}, the number of interior vertices of a minimal Steiner tree is at most $n-2$. For each fixed $m \le n-2$, there are only finitely many connected graphs with $n+m$ vertices, where $n$ vertices are distinguished and fixed as the boundary set $\mathcal{A}$. Therefore, up to isomorphism preserving the boundary, the family of all graphs $G$ with $\partial G = \mathcal{A}$ is finite. Hence, the infimum in the definition of $\smt(\mathcal{A})$ can be replaced by a minimum:
$$\smt(\mathcal{A}) = \min_{G\colon \partial G = \mathcal{A}} \mpn[G, \mathcal{A}].$$
\end{remark}

\section{Main Part: Theorems on Existence of Minimal Parametric Networks}\label{Main}

\textbf{From now on, by a closed ball in a pseudometric space we will mean a closed ball in the sense of Definition~\ref{dfn:balls} centered at a point of this space.}

%%%%%%%%%%%%%%%%%%%%%%%%%%%%%%
\subsection{Existence Theorems in Pseudometric Spaces}\label{sec:pseudo}
%%%%%%%%%%%%%%%%%%%%%%%%%%%%%%

\begin{definition}
\textit{A proper metric space} is a metric space in which every closed bounded subset is compact.
\end{definition}

In the book~\cite{IT}, the existence of a minimal parametric network, as well as of a minimal Steiner tree, on a connected smooth complete Riemannian manifold is proved; see~\cite[Chapter~2, Paragraph~2, Section~2.1, Theorem~2.1]{IT} and~\cite[Chapter~2, Paragraph~2, Section~2.1, Corollary~2.1]{IT}. As observed by Tropin in~\cite{Tropin}, the proof of Theorem~2.1 (and likewise of Corollary~2.1) carries over verbatim to the broader case of proper metric spaces (by the Hopf--Rinow theorem, a connected smooth Riemannian manifold is proper precisely when it is complete).

In the book~\cite{Ambrosio}, Theorem~4.5.9 is proved. In terms of geometric measure theory, it states that if in a metric space $(X, d)$ every closed ball $B$ can be endowed with some compact metrizable topology $\tau$ such that $d$ is $\tau$-lower semicontinuous on $B\times B$, then for every compact subset $S\subset X$ there exists a connected closed set $C\supset S$ of minimal one-dimensional Hausdorff measure. Note that a proper metric space is a special case of a space $X$ satisfying these conditions.

However, Theorem~4.5.9 does not directly extend the result of~\cite[Theorem~1]{Tropin}, even if we take $S$ to be a finite set, i.e., the boundary of the graph parameterizing the network. Indeed, first, in Theorem~4.5.9 the minimum is sought over a wide class of sets, which may not have the structure of a finite metric graph; for instance, they may exhibit fractal-like sets with infinite branching. Second, in~\cite[Theorem~1]{Tropin}, what is sought is not a connected subset, but rather a network, i.e., a mapping from the vertices of a finite graph into a metric space. Third, it is unknown whether the existence of a minimal Steiner tree for every finite collection implies the existence of a minimal parametric network of an arbitrary prescribed type for that collection.

In what follows, we extend Theorem~4.5.9 to the setting of parametric networks in pseudometric spaces, without requiring a compact topology on each closed ball to be metrizable. For convenience, we introduce the following definition.

\begin{definition}\label{dfn:equipped}
A pseudometric space $X$ is called \textit{compactly equipped} if every closed ball $B$ in $X$ can be equipped with its own topology $\tau$ satisfying$:$
\begin{itemize}
\item $(B, \tau)$ is a compact topological space$;$
\item the pseudometric of $X$ is lower semicontinuous on $B\times B$ with respect to the product topology $\tau\times \tau.$
\end{itemize}
\end{definition}

\begin{theorem}\label{thm:Ambrosio}
Let $X$ be a compactly equipped space. Then, for any finite set $\mathcal{A}\subset X$ and for any connected graph $G = (V, E)$ with boundary $\mathcal{A}$, there exists a minimal parametric network of type $G$ joining $\mathcal{A}$.
\end{theorem}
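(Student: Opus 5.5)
The plan is a direct method: restrict to one closed ball, equip it with the compact topology $\tau$ from Definition~\ref{dfn:equipped}, and minimize a lower semicontinuous length functional over a compact product of copies of that ball.

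\textbf{Reduction to a ball.} Let $\mathcal{A}=\{a_1,\dots,a_n\}$ and let $W=V\setminus\mathcal{A}$ be the interior vertices, with $k=\#W$. If $k=0$ there is a single network and nothing to prove, so assume $k\ge1$. Fix any network $g_0\in[G,\mathcal{A}]$, for example one sending every interior vertex to $a_1$, and put $L=|g_0|<\infty$. Let $D=\max_i|a_1\,a_i|$.

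I claim that any network $g$ with $|g|\le L$ has all its vertices in the closed ball $B=B_{L+D}(a_1)$. Indeed, $G$ is connected, so every interior vertex $w$ is joined by a path in $G$ to some boundary vertex $a_i$. By the triangle inequality, $|g(w)\,a_i|$ is at most the sum of the lengths of the edges along that path, hence at most $|g|\le L$. Then $|g(w)\,a_1|\le L+D$.

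Consequently
$$\mpn[G,\mathcal{A}]=\inf\bigl\{|g|\colon g\in[G,\mathcal{A}],\ g(W)\subset B\bigr\}.$$
The admissible networks are identified with points of $B^{W}$, since boundary vertices are fixed and $\mathcal{A}\subset B$.

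\textbf{Compactness and semicontinuity.} Equip $B$ with the topology $\tau$ from Definition~\ref{dfn:equipped}; then $(B,\tau)$ is compact and $d$ is lower semicontinuous on $(B\times B,\tau\times\tau)$. By Tychonoff's theorem (a finite product suffices), $B^{W}$ with the product topology is compact. Write the length as
$$F(x)=\sum_{\{u,v\}\in E}\bigl|x_u\,x_v\bigr|,$$
where $x_a=a$ for $a\in\mathcal{A}$ is constant. Each summand is $d$ composed with a continuous map $B^W\to B\times B$: the map $x\mapsto(x_u,x_v)$, a projection or a constant in each coordinate. Such a composition is lower semicontinuous; this follows directly from the net definition, or from Proposition~\ref{prop:cont_level_0}, because preimages of closed sublevel sets are closed.

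A finite sum of lower semicontinuous functions with values in $[0,\infty)$ is lower semicontinuous, since $\liminf(f+g)\ge\liminf f+\liminf g$ for nonnegative finite values. I will check this step with nets rather than sublevel sets, since sublevel sets of a sum are not directly unions of sublevel sets.

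\textbf{Minimum attained.} A lower semicontinuous real function on a nonempty compact space attains its infimum. Take a net $x_\alpha$ with $F(x_\alpha)\to\inf F$, pass to a convergent subnet $x_\beta\to x^*$, and note $F(x^*)\le\liminf F(x_\beta)=\inf F$. The point $x^*$ gives a network $g^*$ with $|g^*|=\mpn[G,\mathcal{A}]$, so $g^*$ is minimal.

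I expect no serious obstacle. The only delicate points are the finite sum of lower semicontinuous functions and the fact that the topologies are not assumed Hausdorff, so the compactness argument must use subnets or cluster points rather than sequences. Neither needs Hausdorffness: every net in a compact space has a convergent subnet regardless of separation axioms.
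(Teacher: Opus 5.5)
Your proposal is correct and is essentially the paper's argument. Both confine all relevant networks to a single closed ball using paths in the connected graph, apply Tychonoff to the product of copies of $(B,\tau)$, extract a convergent subnet of a minimizing sequence, and pass to the limit edge by edge using lower semicontinuity together with $\sum\liminf\le\liminf\sum$. The differences are only cosmetic: you bound the ball by a fixed competitor $g_0$ rather than by $\mpn[G,\mathcal{A}]+1$, and you phrase the last step as a lower semicontinuous function attaining its minimum on a compact space.
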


\begin{proof}

Let $m = \mpn[G, \mathcal{A}]$. For each $n\in \mathbb{N}$ choose $g_n\in [G, \mathcal{A}]$ such that $|g_n| < m + \frac{1}{n}$. Then the sequence of lengths $|g_n|$ is bounded by $m+1$. 

Fix a point $a\in \mathcal{A}$. Since the graph $G = (V, E)$ is connected, for any vertex $v\in V$ there exists a path $\{v_1 = v, v_2\}, \{v_2, v_3\},\ldots , \{v_{k-1}, v_k = a\}$ connecting $v$ to $a$. By the triangle inequality we get
$$
\bigl|g_n(v)\, a\bigr| \le \sum_{i=1}^{k-1} \bigl|g_n(v_i)\, g_n(v_{i+1})\bigr| \le |g_n| \le m+1
$$
for all $n$. Hence, all points $g_n(v)$ lie in the closed ball $B = B_{m+1}(a)$. By the definition of a compactly equipped space, on $B$ there exists a compact topology $\tau$ such that the pseudometric of the space $X$ is lower semicontinuous on $B\times B$ with respect to the topology $\tau\times \tau$.

Let $V_{\text{int}} = V\setminus \mathcal{A}$ be the set of interior vertices. Any network $g\in [G, \mathcal{A}]$ with images in $B$ is uniquely determined by the restriction $g|_{V_{\text{int}}}$, since $g|_{\mathcal{A}} = \mathrm{id}$. Consider the space
$$
Y = \prod_{v\in V_{\text{int}}} B,
$$
endowed with the product topology $\tau_{Y}$ generated by $\tau$. By Tychonoff's theorem, $Y$ is compact.

Consider the sequence of image tuples $\bigl\{g_n(V_{\text{int}})\bigr\}\subset Y$. It is well known that in a compact space every net (in particular, every sequence) has a convergent subnet. Let $\bigl\{g_{\alpha}(V_{\text{int}})\bigr\}_{\alpha\in \Lambda}$ (provided by $g_{\alpha}(V_{\text{int}}) = g_{n(\alpha)}(V_{\text{int}})$) be a subnet converging in $\tau_Y$ to some point $\widetilde{g}(V_{\text{int}})\in Y$. This means that for each vertex $v\in V_{\text{int}}$, the net $\bigl\{g_{\alpha}(v)\bigr\}\subset B$ converges to $\widetilde{g}(v)\in B$ in the topology $\tau$. For boundary vertices $a\in \mathcal{A}$, set $\widetilde{g}(a) = a$. Thus, $\widetilde{g}\in [G, \mathcal{A}]$.

Estimate the length of $\widetilde{g}$. For any edge $\{u, v\}\in E$, we have $g_{\alpha}(u)\rightarrow \widetilde{g}(u)$ and $g_{\alpha}(v)\rightarrow \widetilde{g}(v)$ in the topology $\tau$. By the lower semicontinuity of the pseudometric of $X$ in the topology $\tau\times \tau$, we have
$$
\bigl|\widetilde{g}(u)\, \widetilde{g}(v)\bigr| \le \liminf_{\alpha} \bigl|g_{\alpha}(u)\, g_{\alpha}(v)\bigr|.
$$
Summing over all edges, we obtain
\begin{multline*}
|\widetilde{g}| = \sum_{\{u, v\}\in E} \bigl|\widetilde{g}(u)\, \widetilde{g}(v)\bigr| \le \sum_{\{u, v\}\in E} \liminf_{\alpha} \bigl|g_{\alpha}(u)\, g_{\alpha}(v)\bigr| \le \\ \le \liminf_{\alpha} \sum_{\{u, v\}\in E} \bigl|g_{\alpha}(u)\, g_{\alpha}(v)\bigr| = \liminf_{\alpha} |g_{\alpha}|.
\end{multline*}
Since $\bigl\{g_{\alpha}\bigr\}_{\alpha\in \Lambda}$ is a subnet of the sequence $\{g_n\}$, for which $\lim\limits_{n\rightarrow \infty} |g_n| = m$, we have $\lim\limits_{\alpha} |g_{\alpha}| = m$. Consequently, $\liminf\limits_{\alpha} |g_{\alpha}| = m$, whence
$$
\mpn[G, \mathcal{A}]\le |\widetilde{g}| \le m = \mpn[G, \mathcal{A}].
$$

Thus, $|\widetilde{g}| = \mpn[G, \mathcal{A}]$, which by definition means that $\widetilde{g}$ is a minimal parametric network of type $G$ joining $\mathcal{A}$.\qed

\end{proof}

From Theorem~\ref{thm:Ambrosio} and Remark~\ref{rk:finite_set} we obtain

\begin{corollary}\label{cor:Ambrosio}
Let $X$ be a compactly equipped space. Then, for any finite set $\mathcal{A}\subset X$, there exists a minimal Steiner tree joining $\mathcal{A}$.
\end{corollary}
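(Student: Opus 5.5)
The corollary follows by combining Theorem~\ref{thm:Ambrosio} with Remark~\ref{rk:finite_set} and then choosing a minimizer with the fewest interior vertices. Fix a finite $\mathcal{A}\subset X$ with $\#\mathcal{A}=n$. The degenerate cases $n\le 1$ are handled directly: the graph with vertex set $\mathcal{A}$ and no edges (or the single-vertex graph) is connected, has length $0$ and has no interior vertices, so it is trivially a minimal Steiner tree. Assume from now on $n\ge 2$.

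The first step is to reduce the minimization over all graphs to a finite family. Call two connected graphs with boundary $\mathcal{A}$ equivalent if there is an isomorphism between them that is the identity on $\mathcal{A}$. Equivalent graphs give the same set of network lengths, so $\mpn$ is constant on each equivalence class. By Remark~\ref{rk:finite_set}, the infimum defining $\smt(\mathcal{A})$ is attained as a minimum over a finite set of classes, namely those with at most $n-2$ interior vertices. Pick a graph $G_0$ in a class realizing this minimum, so $\mpn[G_0,\mathcal{A}]=\smt(\mathcal{A})$.

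The second step is to produce an actual network of length $\smt(\mathcal{A})$ with the least possible number of interior vertices. Consider the set of integers $k\ge 0$ for which some connected graph $G$ with $\partial G=\mathcal{A}$ and exactly $k$ interior vertices satisfies $\mpn[G,\mathcal{A}]=\smt(\mathcal{A})$. This set is nonempty because $G_0$ belongs to it, so it has a least element $k_0$. Take a corresponding graph $G$. Theorem~\ref{thm:Ambrosio} applied to $G$ gives a network $g\in[G,\mathcal{A}]$ with $|g|=\mpn[G,\mathcal{A}]=\smt(\mathcal{A})$.

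Finally, I check Definition~\ref{dfn:mst}. Suppose some $G'$ with $\partial G'=\mathcal{A}$ and some $g'\in[G',\mathcal{A}]$ satisfy $|g'|=\smt(\mathcal{A})$ and have fewer interior vertices than $G$. Then
$$\smt(\mathcal{A})\le\mpn[G',\mathcal{A}]\le |g'|=\smt(\mathcal{A}),$$
so $\mpn[G',\mathcal{A}]=\smt(\mathcal{A})$, and $G'$ has fewer than $k_0$ interior vertices. This contradicts the minimality of $k_0$, so $g$ is a minimal Steiner tree. The main point requiring care is that in the definition of a minimal Steiner tree the length infimum must actually be achieved by a network. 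That is exactly what Theorem~\ref{thm:Ambrosio} supplies, and the well-ordering of the vertex counts then handles the secondary minimization.
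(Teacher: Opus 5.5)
Your proof is correct and is essentially the paper's argument, which just combines Theorem~\ref{thm:Ambrosio} with Remark~\ref{rk:finite_set}: the remark supplies a graph type $G$ with $\mpn[G,\mathcal{A}]=\smt(\mathcal{A})$, and the theorem realizes that value by an actual network. Your well-ordering step on the interior-vertex count $k_0$ spells out the secondary minimization in Definition~\ref{dfn:mst}, which the paper leaves implicit.
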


Indeed, compactly equipped spaces form a very broad class of spaces. Proper metric spaces belong to this class: every closed ball is compact already in the original topology, and the metric of the space is jointly continuous in both variables in any metric space. Thus, Theorem~\ref{thm:Ambrosio} and Corollary~\ref{cor:Ambrosio} generalize the result of~\cite[Theorem~1]{Tropin} and also the result of~\cite[Chapter~2, Paragraph~2, Section~2.1, Theorem~2.1]{IT}.

Note that, from Theorem~\ref{thm:dual}, we have the following corollary.

\begin{corollary}\label{cor:dual_equip}
A dual space is compactly equipped.
\end{corollary}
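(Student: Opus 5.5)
To prove Corollary~\ref{cor:dual_equip}, we check Definition~\ref{dfn:equipped} directly for a dual space $X = Y^*$, where $Y$ is normed and $X$ carries the operator norm~\eqref{eq:norm}. On every closed ball we will use the restriction of the weak$^*$ topology $\tau = \sigma(Y^*, Y)$. The statement is then essentially the ``only if'' direction of Theorem~\ref{thm:dual}, restricted from all of $X$ to a single ball.

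Fix an arbitrary closed ball $B = B_r(x_0)\subset X$. We verify the two conditions of Definition~\ref{dfn:equipped} in turn.

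\emph{Compactness.} By the Banach--Alaoglu Theorem~\ref{thm:Ban_Ala}, $B$ is weak$^*$-compact in $X$. Compactness of a subset is the same as compactness of that subset in its relative topology, so $(B, \tau|_B)$ is a compact topological space.

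\emph{Lower semicontinuity.} By Corollary~\ref{cor:Haus_loc}, $\tau$ is a Hausdorff locally convex linear topology. Every closed ball of $X$ is $\tau$-compact, so Proposition~\ref{prop:semicont_for_dual} applies: the norm metric is lower semicontinuous on $X\times X$ with respect to $\tau\times\tau$. Equivalently, by Proposition~\ref{prop:cont_level_0}, each sublevel set $\{(x,y)\colon \|x-y\|\le c\}$ is $(\tau\times\tau)$-closed. Intersecting with $B\times B$, each sublevel set of the restricted metric is closed in the subspace topology $\tau|_B\times\tau|_B$. This subspace topology coincides with the product of the relative topologies. Applying Proposition~\ref{prop:cont_level_0} once more gives lower semicontinuity on $B\times B$.

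The only point needing care is this passage from the whole space to the ball. Restricting a lower semicontinuous function to a subspace keeps it lower semicontinuous, and the relative topology of $B\times B$ inside $(X\times X, \tau\times\tau)$ is exactly $\tau|_B\times\tau|_B$. With both conditions verified, every closed ball of $X$ is equipped with a suitable topology, so $X$ is compactly equipped.
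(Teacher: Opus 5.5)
Your proof is correct and follows the paper's route: the paper deduces the corollary from the forward direction of Theorem~\ref{thm:dual}, and that direction is proved by the same chain you use, namely Theorem~\ref{thm:Ban_Ala}, Corollary~\ref{cor:Haus_loc} and Proposition~\ref{prop:semicont_for_dual}. Your restriction of the lower semicontinuity from $X\times X$ to $B\times B$ spells out a step the paper leaves implicit.
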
 

Indeed, the following proposition also holds.

\begin{proposition}\label{prop:complete}
A compactly equipped space is complete.
\end{proposition}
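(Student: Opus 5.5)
Let $\{x_n\}$ be a Cauchy sequence in the compactly equipped space $X$. The plan is to produce a candidate limit from the compact topology on one ball, and then use lower semicontinuity of the pseudometric to show that the original sequence actually converges to it in $|\cdot\,\cdot|$.

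First we place the whole sequence in a single closed ball. Since the sequence is Cauchy it is bounded: there is $N_0$ with $|x_n\,x_{N_0}|\le 1$ for $n\ge N_0$. Set $R=1+\max_{n<N_0}|x_n\,x_{N_0}|$. Then every $x_n$ lies in the closed ball $B=B_R(x_{N_0})$. By Definition~\ref{dfn:equipped}, $B$ carries a topology $\tau$ such that $(B,\tau)$ is compact and the pseudometric is lower semicontinuous on $B\times B$ with respect to $\tau\times\tau$. As in the proof of Theorem~\ref{thm:Ambrosio}, compactness gives a subnet $\{x_{n(\alpha)}\}_{\alpha\in\Lambda}$ converging in $\tau$ to some point $x\in B$. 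No metrizability or Hausdorff property is needed for this.

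Next we show that $x_n\to x$ in the pseudometric. Fix $\varepsilon>0$ and choose $N$ with $|x_n\,x_k|\le\varepsilon$ for all $n,k\ge N$. Fix $n\ge N$.
\begin{itemize}
\item Consider the net $(x_n,x_{n(\alpha)})$ in $B\times B$. Its first coordinate is constant and its second coordinate converges to $x$, so the net converges to $(x_n,x)$ in $\tau\times\tau$.
\item By lower semicontinuity, $|x_n\,x|\le\liminf_\alpha|x_n\,x_{n(\alpha)}|$.
\item Since $\{n(\alpha)\}$ is a subnet of the identity sequence, $n(\alpha)\ge N$ for all sufficiently large $\alpha$. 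Hence $|x_n\,x_{n(\alpha)}|\le\varepsilon$ eventually, and the $\liminf$ is at most $\varepsilon$.
\end{itemize}
Therefore $|x_n\,x|\le\varepsilon$ for every $n\ge N$, so $x_n\to x$ in $X$, and $X$ is complete.

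The step that needs care is the subnet bookkeeping. Specifically, we must justify that for a subnet of a sequence the indices $n(\alpha)$ eventually exceed any fixed $N$; this is the cofinality condition in the definition of a subnet. We also use that a net with constant first coordinate and $\tau$-convergent second coordinate converges in the product topology; this is immediate from the definition of $\tau\times\tau$. Everything else reduces to Definition~\ref{dfn:equipped} and the net characterization of lower semicontinuity used in the proof of Proposition~\ref{prop:cont_level_0}.
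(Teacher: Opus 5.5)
Your proof is correct and follows essentially the same route as the paper's proof: you place the Cauchy sequence in one closed ball, extract a $\tau$-convergent subnet by compactness, and use lower semicontinuity together with the cofinality of the subnet indices to get $|x_n\,x|\le\varepsilon$ for all large $n$. Your explicit justification that the constant-first-coordinate net $(x_n,x_{n(\alpha)})$ converges in $\tau\times\tau$ is the step the paper leaves implicit when it asserts that $y\mapsto|x_i\,y|$ is lower semicontinuous on $(B,\tau)$.
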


\begin{proof}

Let $X$ be a compactly equipped pseudometric space. Consider an arbitrary Cauchy sequence $\{x_n\}_{n=1}^{\infty}\subset X$. By the definition of a Cauchy sequence, there exists a natural number $N$ such that for all $n, m \ge N$ we have $|x_n\, x_m| < 1$. In particular, for all $n\ge N$ we have $|x_n\, x_N| < 1$. Put
$$r = \max\bigl\{1, |x_1\, x_N|, |x_2\, x_N|, \ldots, |x_{N-1}\, x_N| \bigr\}.$$
Then $\{x_n\}_{n=1}^{\infty}\subset B_r(x_N)$. Denote $B_r(x_N)$ by $B$.

By the definition of a compactly equipped space, on the ball $B$ there exists a compact topology $\tau$ such that the pseudometric of the space $X$ is lower semicontinuous on $B\times B$ with respect to the topology $\tau\times \tau$. In a compact topological space, every sequence has a convergent subnet. Hence, there exists a subnet $\{x_{\alpha}\}_{\alpha\in \Lambda}$ (where $x_{\alpha} = x_{n(\alpha)}$) converging in the topology $\tau$ to some point $x\in B$.

Fix $\varepsilon > 0$. From the Cauchy property, there exists $L$ such that $|x_i\, x_j| < \varepsilon$ for all $i, j\ge L$. Moreover, for any fixed $i\ge L$, the function $f_i(y) = |x_i\, y|$ is lower semicontinuous on $(B, \tau)$. Therefore,
\begin{equation}\label{eq:i_alpha}
|x_i\, x| = f_i(x) \le \liminf\limits_{\alpha} f_i(x_{\alpha}) = \liminf\limits_{\alpha} |x_i\, x_{\alpha}|.
\end{equation}

By the definition of subnet, for $L$ there exists $\alpha_0\in \Lambda$ such that for all $\alpha\ge \alpha_0$ we obtain $n(\alpha)\ge L$. This means that for all such $\alpha\ge \alpha_0$ we have $|x_i\, x_{\alpha}| < \varepsilon$. Consequently, since $\Lambda$ is a direct set, we obtain $\liminf\limits_{\alpha} |x_i\, x_{\alpha}|\le \varepsilon$. Thus, by~\eqref{eq:i_alpha} for all $i\ge L$,
$$|x_i\, x|\le \varepsilon.$$
Since $\varepsilon > 0$ was arbitrary, it follows that $|x_i\, x|\rightarrow 0$ as $i\rightarrow \infty$. Hence, the sequence $\{x_n\}_{n=1}^{\infty}\subset X$ converges to $x\in X$ in the pseudometric of $X$. Therefore, $X$ is complete.\qed

\end{proof}

Note that there also exist incomplete metric spaces such that every finite subset can be joined by a minimal parametric network of arbitrary type. We shall formulate an example of such a space as a proposition.

\begin{proposition}\label{prop:noncomplete}
Let $X = \mathbb{Q}$ be the set of rational numbers with metric $|x\, y| = |x - y|$ for all $x, y\in \mathbb{Q}$. Then for any finite set $\mathcal{A}\subset \mathbb{Q}$ and any connected graph $G=(V, E)$ with boundary $\mathcal{A}$, there exists a minimal parametric network of type $G$ joining $\mathcal{A}$.
\end{proposition}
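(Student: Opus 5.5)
The plan is to reduce the problem from $\mathbb{Q}$ to $\mathbb{R}$ and then show that among the real minimizers there is always a rational one.

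First, $\mathbb{R}$ is a proper metric space, hence compactly equipped. By Theorem~\ref{thm:Ambrosio} there is a minimal parametric network $g\colon V\to\mathbb{R}$ of type $G$ joining $\mathcal{A}$ in $\mathbb{R}$. Since $\mathbb{Q}\subset\mathbb{R}$ isometrically, the real infimum is at most the rational infimum. So it suffices to find a minimizer in $\mathbb{R}$ whose interior vertices all take rational values.

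The key step is to exploit the structure of the length functional on the real problem. Write $V_{\text{int}}=\{v_1,\dots,v_k\}$ and view the length as a function
\[
L(x_1,\dots,x_k)=\sum_{\{u,v\}\in E}|x_u-x_v|
\]
on $\mathbb{R}^k$, where $x_a=a$ for $a\in\mathcal{A}$. This function is convex and piecewise linear. Its pieces are the closed polyhedral cells cut out by the hyperplanes $x_u=x_v$ and $x_u=a$ for $a\in\mathcal{A}$. All of these hyperplanes have rational coefficients, because the points of $\mathcal{A}$ are rational.

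The set of minimizers $M$ is nonempty, by the first step, and convex. Inside any single cell, $L$ is linear with integer coefficients. Hence $M$ is a union of faces of this rational polyhedral complex, and $M$ is itself a rational polyhedron: on each cell it is cut out by rational linear equalities and inequalities.

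To produce a rational point of $M$, choose a cell $C$ meeting $M$. Then $M\cap C$ is the set of minimizers of a linear function with rational coefficients over a rational polyhedron, i.e.\ a nonempty face of $C$. Every nonempty rational polyhedron contains a rational point. If the face has a vertex, that vertex solves a rational linear system and is rational. In general, a nonempty polyhedron $\{x: Ax\le b\}$ with rational $A,b$ has a minimal face, which is an affine subspace defined by rational equations, and any such nonempty subspace has a rational solution by Gaussian elimination over $\mathbb{Q}$.

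This rational minimizer $\widetilde{g}$ has $|\widetilde{g}|=\mpn$ over $\mathbb{R}$, which is at most $\mpn$ over $\mathbb{Q}$. So $\widetilde{g}$ is minimal in $\mathbb{Q}$.

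The main obstacle is making the rational-polyhedron claim airtight without heavy polyhedral theory. I would give a more hands-on alternative. Take a real minimizer $g$ and let $P=\mathcal{A}\cup g(V_{\text{int}})$. Replace every value not in $\mathcal{A}$ by an affine perturbation within its cluster: all interior vertices with the same value $t\notin\mathcal{A}$ are moved together to $t+s$.

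On a small interval of $s$, the order relations with the other values are preserved, so the length is affine in $s$. Since $g$ is a minimizer and $s$ ranges over an open interval around $0$, the slope must be zero. We can therefore slide the whole cluster, at constant length, until it hits an adjacent value. Each such slide merges two distinct values, so the number of distinct values strictly decreases. Clusters already equal to points of $\mathcal{A}$ stay fixed.

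Iterating, every interior value eventually lands in $\mathcal{A}$. Alternatively, clusters can only be unbounded if they have no neighbours, in which case we simply move them onto a point of $\mathcal{A}$; this has zero cost, since such vertices are joined only among themselves. The end result is a minimizer with all interior vertices in $\mathcal{A}\subset\mathbb{Q}$, which completes the argument.
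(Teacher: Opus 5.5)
Your proposal is correct, but it is organized differently from the paper's proof. The paper never passes through $\mathbb{R}$ or Theorem~\ref{thm:Ambrosio}. It linearizes the absolute values with auxiliary variables $t_e\ge \pm(x_u-x_v)$. The fundamental theorem of linear programming (feasible and bounded below) then gives an optimal basic feasible solution, which is rational by Cramer's rule. So existence and rationality come out of one standard theorem.

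Your first argument uses the same mechanism (rational data force a rational optimum) but splits it in two. Existence over $\mathbb{R}$ comes from Theorem~\ref{thm:Ambrosio}, which is legitimate since $\mathbb{R}$ is proper. Rationality comes from the sign-cell decomposition: $M\cap C$ is a face of a rational polyhedron, hence contains a rational point. This relies on two facts: faces of rational polyhedra are rational, and nonempty rational polyhedra contain rational points. If you want to avoid minimal-face theory, the latter can be proved elementarily by Fourier--Motzkin elimination.

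Your second, sliding argument is genuinely different and stronger. It uses no linear programming at all, and it produces a real minimizer with every interior vertex mapped into $\mathcal{A}$ itself. It therefore proves the statement for an arbitrary subset $X\subset\mathbb{R}$ containing $\mathcal{A}$. The Cramer's-rule argument, by contrast, relies on $\mathbb{Q}$ being a field.

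One small tidy-up: your remark about clusters with ``no neighbours'' is unnecessary. Since the slope is zero, you may always slide a cluster (with value $t\notin\mathcal{A}$) towards some point of $\mathcal{A}$, so the slide stops at a finite $s$ whenever $\mathcal{A}\neq\emptyset$. By connectedness of $G$, every such cluster also has an edge to a vertex with a different value. The case $\mathcal{A}=\emptyset$ is trivial: map all vertices to one point.
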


\begin{proof}

For each interior vertex $v\in V\setminus \mathcal{A}$, we introduce a variable $x_v\in \mathbb{R}$. For each edge $e = \{u, v\}\in E$, we introduce a variable $t_e\in \mathbb{R}$. Consider the following linear programming problem:
\begin{equation}\label{eq:prog}
\sum\limits_{e\in E} t_e \rightarrow \min
\end{equation}
subject to the constraints for all $\{u, v\}\in E$
$$t_{\{u, v\}}\ge x_u - x_v;$$
$$t_{\{u, v\}}\ge x_v - x_u$$
and for all $a\in \mathcal{A}$
$$x_a = a.$$
All constraints are linear, and therefore this problem can indeed be reduced to the standard linear programming form $Ax = b$, $x\ge 0$. The fundamental theorem of linear programming guarantees that if the set $P = \{x\colon Ax = b, x\ge 0\}$ is nonempty and the objective function is bounded below on $P$, then there is an optimal basic feasible solution (i.e., an optimal basic solution of $Ax = b$ that satisfies $x\ge 0$). 

The constraints imply $t_e\ge 0$ for all $e\in E$, therefore $\sum_{e\in E} t_e\ge 0$. Also one can take $x_v = 0$ for all $v\in V\setminus \mathcal{A}$ and put $t_e = |x_u - x_v|$ for all $\{u, v\}\in E$. Such values of variables satisfy all constraints, so $P\neq \emptyset$. Hence, there is an optimal basic feasible solution for our task~\eqref{eq:prog}.

Since all coefficients in our constraint system are rational, by Cramer's rule any basic solution of this system is rational (the determinant of a rational matrix is a rational number). Hence, there exists an optimal solution $(x^*, t^*)$ with $x_v^* \in \mathbb{Q}$ for all $v\in V\setminus \mathcal{A}$ and $t_{\{u, v\}}^* \in \mathbb{Q}$.

For every edge $e = \{u, v\}$, in an optimal solution we must have $t_{\{u, v\}}^* = |x_u^* - x_v^*|$; otherwise, we could decrease the corresponding $t_{\{u, v\}}^*$ without violating the constraints and improve the objective function. Now define a network $g\colon V\to \mathbb{Q}$ by setting $g(v) = x_v^*$ for $v\in V\setminus \mathcal{A}$ and $g(a)=a$ for $a\in \mathcal{A}$. Then
$$
|g| = \sum_{\{u, v\}\in E} |g(u) - g(v)| = \sum_{e\in E} t_e^*,
$$
and this value equals $\mpn[G, \mathcal{A}]$. So, $g$ is a minimal parametric network of type $G$ joining $\mathcal{A}$.\qed

\end{proof}

%%%%%%%%%%%%%%%%%%%%%%%%%%%%%%
\subsection{Existence Theorems in Banach Spaces}\label{sec:Banach}
%%%%%%%%%%%%%%%%%%%%%%%%%%%%%%

In the article~\cite{Bednov}, Strelkova gave an example of a Banach space (Theorem~3) in which for any $n\ge 3$ there exists a collection $\mathcal{A} = \{x_1,\ldots , x_n\}$ for which no minimal Steiner tree joining this collection exists. In the same article, Bednov also gave a sufficient condition on a Banach space $X$ (Theorem~1) for any finite collection in $X$ to be joinable by a shortest network, i.e., a minimal Steiner tree. This condition is precisely the existence of a norm-one projector $P\colon X^{**}\rightarrow X$. Note that in fact, Bednov's proof establishes the existence of a minimal network for each fixed type; at the end of the proof, he then invokes the finiteness of the number of possible types to conclude that a shortest network exists.

So, a bidual is a dual space, which means that $X^{**}$ is a compactly equipped space according to Corollary~\ref{cor:dual_equip}, and so, by Theorem~\ref{thm:Ambrosio}, any finite set in it can be joined by a minimal parametric network and hence by a minimal Steiner tree. Finally, the projector of norm $1$ on $X^{**}$ allows us to obtain a minimal network in $X$ itself. Thus, we can replace $X^{**}$ by an arbitrary compactly equipped normed space $Y$. The main idea of the proof remains unchanged, but the argument becomes simpler.

Note that, by Proposition~\ref{prop:complete}, a compactly equipped normed space is a Banach space. Also according to item ($a$) of Theorem~\ref{thm:rud} and Definition~\ref{dfn:comp}, the image of $P$ (which is equal to $X$) is closed in $Y$. Hence, $X$ is a Banach space too. According to Proposition~\ref{prop:one_comp}, for Banach spaces $X$ and $Y$, the existence of a projector $P\colon Y\rightarrow X$ of norm $1$ is equivalent to $X$ being $1$-complemented in $Y$. Thus we obtain the following theorem.

\begin{theorem}\label{thm:gen}
Let $X$ be a Banach space that is $1$-complemented in some compactly equipped Banach space $Y$ $($in particular, in its bidual$)$. Then, for any finite set $\mathcal{A}\subset X$ and for any connected graph $G = (V, E)$ with boundary $\mathcal{A}$, there exists a minimal parametric network of type $G$ joining $\mathcal{A}$. 
\end{theorem}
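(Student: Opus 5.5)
Fix a finite set $\mathcal{A}\subset X$ and a connected graph $G=(V,E)$ with $\partial G=\mathcal{A}$. Since $X\subset Y$ with the restricted norm, $\mathcal{A}$ is also a finite subset of $Y$. The plan is to solve the problem in $Y$ and then push the solution down to $X$ with the norm-one projector.

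First, by Proposition~\ref{prop:one_comp} there is a linear projector $P\colon Y\to X$ with $\|P\|=1$. Its image is $X$, and $Px=x$ for every $x\in X$, as shown just before Proposition~\ref{prop:one_comp}. Because $Y$ is compactly equipped, Theorem~\ref{thm:Ambrosio} gives a minimal parametric network $h\in[G,\mathcal{A}]$ in $Y$, so that $|h|_Y=\mpn_Y[G,\mathcal{A}]$. In the ``in particular'' case $Y=X^{**}$, this step uses Corollary~\ref{cor:dual_equip}.

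Next define $g=P\circ h\colon V\to X$. For $a\in\mathcal{A}\subset X$ we have $g(a)=P(a)=a$, so $g\in[G,\mathcal{A}]$ in $X$. For every edge $\{u,v\}\in E$, linearity and $\|P\|=1$ give
$$\|g(u)-g(v)\|=\bigl\|P\bigl(h(u)-h(v)\bigr)\bigr\|\le\|h(u)-h(v)\|.$$
Summing over the edges yields $|g|\le|h|=\mpn_Y[G,\mathcal{A}]$.

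Finally, every network of type $G$ in $X$ is also a network of the same type in $Y$ with the same length, because the norm of $X$ is the restriction of the norm of $Y$. Hence $\mpn_Y[G,\mathcal{A}]\le\mpn_X[G,\mathcal{A}]\le|g|$, and combining with the previous bound gives $|g|=\mpn_X[G,\mathcal{A}]$. So $g$ is the desired minimal parametric network in $X$.

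The main obstacle is already handled by Theorem~\ref{thm:Ambrosio}. What remains needs only two checks. The first is that $P$ fixes $X$ pointwise, so that the boundary condition $g|_{\mathcal{A}}=\mathrm{id}$ holds. The second is that $X$ sits isometrically in $Y$, so that the comparison of the two infima is valid. For $Y=X^{**}$ the isometry is the canonical embedding discussed in Subsection~\ref{sec:ref_space}; in general it is part of the meaning of ``$1$-complemented subspace''.
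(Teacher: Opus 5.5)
Your proposal is correct and matches the paper's proof. Both find a minimal network in $Y$ via Theorem~\ref{thm:Ambrosio}, compose it with the norm-one projector, and squeeze $|g|$ between $\mpn$ in $X$ and $\mpn$ in $Y$. The only cosmetic difference is that you apply $P$ to every vertex, using $Px=x$ on $X$, while the paper applies it only to interior vertices; this produces the same network.
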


\begin{proof}

Since $X$ is a complemented space in $Y$, we have $\mathcal{A}\subset Y$. By Theorem~\ref{thm:Ambrosio}, in $Y$ there exists a minimal parametric network $\widetilde{g}$ of type $G$ joining $\mathcal{A}$. 

Note that $|\widetilde{g}| = \mpn\bigl[G, \mathcal{A}\subset Y\bigr]\le \mpn\bigl[G, \mathcal{A}\subset X\bigr]$, since $X\subset Y$, and, for a vertex $v\in V\setminus \mathcal{A}$, an image $\widetilde{g}(v)$ can lie in $Y\setminus X$. But by definition of $1$-complemented space, there exists a projector $P$ of norm $1$ such that $P(Y) = X$.

Apply the projector $P$ to the image $\widetilde{g}(v)\in Y$ of each interior vertex $v\in V\setminus \mathcal{A}$ by setting $g(v) = P\bigl(\widetilde{g}(v)\bigr)\in X$. This defines the network $g$ of type $G$ (identical on $\mathcal{A}$) in space $X$ joining $\mathcal{A}\subset X$. Then for any edge $\{u, v\}\in E$ we obtain:
$$
\bigl|g(u)\, g(v)\bigr| = \Bigl|P\bigl(\widetilde{g}(u)\bigr)\, P\bigl(\widetilde{g}(v)\bigr)\Bigr| \le ||P||\cdot \bigl|\widetilde{g}(u)\, \widetilde{g}(v)\bigr| = \bigl|\widetilde{g}(u)\, \widetilde{g}(v)\bigr|.
$$
Consequently,
$$
\mpn\bigl[G, \mathcal{A}\subset X\bigr] \le |g| \le |\widetilde{g}| \le \mpn\bigl[G, \mathcal{A}\subset X\bigr].
$$
Thus, $|g| = \mpn\bigl[G, \mathcal{A}\subset X\bigr]$. Hence, $g$ is a minimal parametric network of type $G$ joining $\mathcal{A}$ in $X$.\qed

\end{proof}

\begin{corollary}\label{cor:gen}
Let $X$ be a Banach space that is $1$-complemented in some compactly equipped Banach space $Y$ $($in particular, in its bidual$)$. Then, for any finite set $\mathcal{A}\subset X$, there exists a minimal Steiner tree joining $\mathcal{A}$. 
\end{corollary}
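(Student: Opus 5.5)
The plan is to combine Theorem~\ref{thm:gen} with the finiteness argument of Remark~\ref{rk:finite_set}, in the same way that Corollary~\ref{cor:Ambrosio} was obtained from Theorem~\ref{thm:Ambrosio}. Fix a finite set $\mathcal{A}\subset X$ with $\#\mathcal{A}=n$.

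\emph{Reduction to finitely many types.} By Proposition~\ref{prop:int_vert}, an optimal structure needs at most $n-2$ interior vertices. Consequently, up to isomorphism fixing $\mathcal{A}$, only finitely many connected graphs $G_1,\dots,G_k$ with $\partial G_i=\mathcal{A}$ are relevant. Remark~\ref{rk:finite_set} therefore gives
$$\smt(\mathcal{A})=\min_{1\le i\le k}\mpn[G_i,\mathcal{A}],$$
and we fix an index $i_0$ at which this minimum is attained.

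\emph{Realizing the minimum.} Apply Theorem~\ref{thm:gen} to $G_{i_0}$. It yields a network $g\in[G_{i_0},\mathcal{A}]$ in $X$ with $|g|=\mpn[G_{i_0},\mathcal{A}]=\smt(\mathcal{A})$. This is the step where $1$-complementedness in the compactly equipped space $Y$ is used.

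\emph{Minimality of the number of interior vertices.} Among all pairs $(G,g)$ with $\partial G=\mathcal{A}$, $g\in[G,\mathcal{A}]$ and $|g|=\smt(\mathcal{A})$, choose one for which $\#(V\setminus\mathcal{A})$ is smallest. This is possible because the set of such pairs is nonempty by the previous step, and the interior-vertex counts are nonnegative integers, so a minimum exists. By construction no pair with strictly fewer interior vertices attains $\smt(\mathcal{A})$, so the chosen $g$ is a minimal Steiner tree in the sense of Definition~\ref{dfn:mst}.

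\emph{Main subtlety.} The one point needing care is the use of Proposition~\ref{prop:int_vert} inside Remark~\ref{rk:finite_set}, since that proposition presupposes that a minimal Steiner tree already exists. To avoid circularity, the finiteness should be justified directly by a degree-reduction argument. An interior vertex of degree $1$ can simply be deleted, and by the triangle inequality this does not increase the length. An interior vertex of degree $2$ can be suppressed by replacing its two incident edges with a single edge between its neighbours, which again does not increase the length. After these reductions every interior vertex has degree at least $3$; passing to a spanning tree (which only removes edges) then bounds the number of interior vertices by $n-2$.

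This shows that every $\mpn[G,\mathcal{A}]$ is at least the minimum of $\mpn$ over the finitely many reduced types, so the infimum defining $\smt(\mathcal{A})$ is indeed a minimum over a finite family. With that in place, the argument above goes through without relying on the existence of a minimal Steiner tree in advance.
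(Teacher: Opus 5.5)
Your argument follows the paper's route. The corollary comes from Theorem~\ref{thm:gen} together with Remark~\ref{rk:finite_set}, just as Corollary~\ref{cor:Ambrosio} comes from Theorem~\ref{thm:Ambrosio}. Your last step, choosing among all networks of length $\smt(\mathcal{A})$ one with the fewest interior vertices, is the right way to meet Definition~\ref{dfn:mst}.

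Your worry about Remark~\ref{rk:finite_set} is fair, since as written it leans on Proposition~\ref{prop:int_vert}, which concerns minimal Steiner trees. A degree-reduction argument is the right repair, but you apply its steps in the wrong order. If you reduce degrees first and only then pass to a spanning tree, deleting edges can lower degrees again. Take $K_4$ on $\{a,b,v_1,v_2\}$ with $\mathcal{A}=\{a,b\}$: both interior vertices have degree $3$. Yet the spanning star centred at $a$ turns them into leaves, leaving $2>n-2=0$ interior vertices, so the claimed bound fails as stated. There is a second problem outside a tree. Suppressing a degree-$2$ vertex $v$ with neighbours $u,w$ can create a second edge $\{u,w\}$, which the simple graphs used here do not allow.

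The fix is to pass to a spanning tree first. Then repeatedly delete interior leaves and suppress interior vertices of degree $2$. Both operations keep the graph a tree, because in a tree $u$ and $w$ are never adjacent. Neither increases length, by nonnegativity and the triangle inequality. Each removes a vertex, so the process terminates. The reduction is purely combinatorial, so the reduced graph $G'$ satisfies $\mpn[G',\mathcal{A}]\le\mpn[G,\mathcal{A}]$. In the final tree there are $k$ interior vertices of degree at least $3$ and, for $n\ge 2$, boundary vertices of degree at least $1$. The handshake count $2(n+k-1)\ge n+3k$ then gives $k\le n-2$. With this reordering your proof is complete.
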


%%%%%%%%%%%%%%%%%%%%%%%%%%%%%%
\subsection{Existence Theorems in Hyperspaces Over Pseudometric Spaces}\label{sec:hyp_pseudo}
%%%%%%%%%%%%%%%%%%%%%%%%%%%%%%

\begin{theorem}\label{thm:dH_semicont}
Let $X$ be a pseudometric space and let a closed ball $B$ in $X$ be equipped with its own topology $\tau$ such that$:$
\begin{itemize}
\item $(B, \tau)$ is a compact Hausdorff topological space$;$
\item the pseudometric of $X$ is lower semicontinuous on $B\times B$ with respect to the product topology $\tau\times \tau$.
\end{itemize}
Then the Hausdorff distance $d_H$ induced by the pseudometric of $X$ is lower semicontinuous on $\mathcal{P}_{\Cl_{\tau}}(B)\times \mathcal{P}_{\Cl_{\tau}}(B)$ in the topology $V_{\tau}\times V_{\tau}$.
\end{theorem}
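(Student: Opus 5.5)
We will use the level-set characterization of Proposition~\ref{prop:cont_level_0}. It suffices to show that for every $c\in\mathbb{R}$ the set
$$\mathcal{F}_c=\bigl\{(M,N)\in\mathcal{P}_{\Cl_{\tau}}(B)\times\mathcal{P}_{\Cl_{\tau}}(B)\colon d_H(M,N)\le c\bigr\}$$
is closed in $V_\tau\times V_\tau$. We will do this with nets. Take a net $(M_\lambda,N_\lambda)\in\mathcal{F}_c$ converging to $(M,N)$, and fix $x\in M$. By Definition~\ref{distance_H2} and symmetry, it is enough to show that $|x\,N|\le c$.

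\textbf{Step 1: transfer convergence to points.} Theorem~\ref{thm:Kuratow} applies because $(B,\tau)$ is compact Hausdorff. It gives $M=\Lim M_\lambda$ and $N=\Lim N_\lambda$ in the sense of Kuratowski. Since $x\in\Li M_\lambda$, we can build a net $x_\mu\in M_{\lambda(\mu)}$ converging to $x$ in $\tau$, indexed by pairs $\mu=(\lambda,U)$ with $U$ a $\tau$-neighbourhood of $x$, using the standard construction. The indices $\lambda(\mu)$ must be cofinal. Concretely, for $(\lambda_0,U)$ we pick $\lambda\ge\lambda_0$ with $M_\lambda\cap U\neq\emptyset$ and choose $x_\mu$ in that intersection.

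\textbf{Step 2: choose near points in $N_\lambda$ and pass to a limit.} Since $d_H(M_{\lambda},N_{\lambda})\le c$, for each $\mu$ and each $\varepsilon>0$ there is $y\in N_{\lambda(\mu)}$ with $|x_\mu\,y|\le c+\varepsilon$. We fold $\varepsilon$ into the index as well, letting it shrink along the directed set, so that we get $y_\mu\in N_{\lambda(\mu)}$ with $|x_\mu\,y_\mu|\le c+\varepsilon_\mu$ and $\varepsilon_\mu\to0$. By compactness of $(B,\tau)$, the net $(y_\mu)$ has a subnet converging to some $y\in B$. Then $y$ lies in $\Ls N_\lambda$, because the indices $\lambda(\mu)$ are cofinal along the subnet. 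Kuratowski convergence gives $\Ls N_\lambda=N$, so $y\in N$.

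\textbf{Step 3: lower semicontinuity.} Along the subnet we have $(x_\mu,y_\mu)\to(x,y)$ in $\tau\times\tau$. Lower semicontinuity of the pseudometric on $B\times B$ then gives
$$|x\,y|\le\liminf|x_\mu\,y_\mu|\le c.$$
Hence $|x\,N|\le c$. Taking the supremum over $x\in M$, and arguing symmetrically for points of $N$, we get $d_H(M,N)\le c$. So $\mathcal{F}_c$ is closed and $d_H$ is lower semicontinuous.

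The main obstacle is the net bookkeeping in Steps 1–2. The net $(x_\mu)$ must be genuinely attached to cofinal indices of the original net, so that the limit point $y$ is guaranteed to lie in $\Ls N_\lambda$ and not merely in some unrelated set. Using the product directed set $\Lambda\times\{\text{neighbourhoods of }x\}\times\mathbb{N}$ and passing to a subnet should handle this. An alternative route avoids Kuratowski limits altogether. Suppose $d_H(M,N)>c$. Then there is $x\in M$ with $B_{c'}(x)\cap N=\emptyset$ for some $c'>c$. The set $K=\{z\in B\colon |x\,z|\le c'\}$ is $\tau$-closed, being a section of a closed set. Using compactness of $K$ and of $N$, lower semicontinuity yields neighbourhoods $U\ni x$ and $W\supset N$ with $|u\,w|>c'$ for $u\in U$, $w\in W$. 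Then $U^-\times W^+$ is a neighbourhood of $(M,N)$ missing $\mathcal{F}_c$. I would fall back on this route if the net argument becomes messy.
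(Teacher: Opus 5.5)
Your net argument is correct and is essentially the paper's proof. In both, you approximate a point of $M$ by points of the $M_\lambda$, take near points in the $N_\lambda$, extract a $\tau$-convergent subnet whose limit lies in $N$ by Kuratowski convergence, and conclude with lower semicontinuity; you phrase it directly via closed sublevel sets with $\varepsilon$-near points, whereas the paper argues by contradiction with exact nearest points obtained from compactness of $N_\beta$. Your re-indexing over $\Lambda\times\{\text{neighbourhoods of }x\}\times\mathbb{N}$ is actually more careful than the paper, which simply asserts $m_\beta\in M_\beta$ with $m_\beta\to m$ along the original index set. That selection can fail when $\tau$ is not first countable (e.g.\ $M_n=\{0,\dots,n-1\}\to\beta\mathbb{N}$ in $\beta\mathbb{N}$), so your bookkeeping is what makes the argument rigorous.
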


\begin{proof}

Let $\bigl\{(M_{\alpha}, N_{\alpha})\bigr\}_{\alpha\in \Lambda}$ be a net in $\mathcal{P}_{\Cl_{\tau}}(B)\times \mathcal{P}_{\Cl_{\tau}}(B)$ converging to $(M, N)\in \mathcal{P}_{\Cl_{\tau}}(B)\times \mathcal{P}_{\Cl_{\tau}}(B)$ in the topology $V_{\tau}\times V_{\tau}$. We will show that $\liminf\limits_{\alpha} d_H(M_{\alpha}, N_{\alpha})\ge d_H(M, N).$

Assume that $\liminf\limits_{\alpha} d_H(M_{\alpha}, N_{\alpha}) < d_H(M, N)$. Since the pseudometric of $X$ is bounded on $B$, then $d_H(M, N) < \infty$. So there exist $\varepsilon > 0$ and a subnet $\bigl\{(M_{\beta}, N_{\beta})\bigr\}$ such that for every $\beta$ we have $d_H(M_{\beta}, N_{\beta}) < d_H(M, N) - 2\varepsilon.$ Hence,
\begin{equation}\label{eq:sup}
\sup_{x\in M_{\beta}} |x\, N_{\beta}| < d_H(M, N) - 2\varepsilon
\end{equation}
for every $\beta$.

Without loss of generality, assume that $d_H(M, N) = \sup_{x\in M} |x\, N|$ (otherwise, interchange $M$ and $N$). By the definition of supremum, for the chosen $\varepsilon$, there exists $m\in M$ such that $d_H(M, N) - |m\, N| < \varepsilon,$ that is,
\begin{equation}\label{eq:dH}
|m\, N| > d_H(M, N) - \varepsilon.
\end{equation}

Since $\tau$ is a compact Hausdorff topology and the net $\{M_{\beta}\}$ converges to $M$ with respect to the topology $V_{\tau}$, it follows from Theorem~\ref{thm:Kuratow} that $\Lim\limits_{\beta} M_{\beta} = M$. So, for the point $m$, there exist points $m_{\beta}\in M_{\beta}$ such that the net $\{m_{\beta}\}$ converges to $m$ with respect to the topology $\tau$. By~(\ref{eq:sup}), we have
$$|m_{\beta}\, N_{\beta}| < d_H(M, N) - 2\varepsilon.$$

Since $(B, \tau)$ is compact and the $N_{\beta}$ are $\tau$-closed, the $N_{\beta}$ are $\tau$-compact. Together with the lower semicontinuity of the pseudometric of $X$ on $B$ in the topology $\tau\times \tau$, this yields the existence of points $n_{\beta}\in N_{\beta}$ such that
\begin{equation}\label{eq:d_H_eps}
|m_{\beta}\, n_{\beta}| = |m_{\beta}\, N_{\beta}| < d_H(M, N) - 2\varepsilon.
\end{equation}

By the compactness of $(B, \tau)$, the net $\{n_{\beta}\}$ has a subnet $\{n_{\gamma}\}$ which converges in $\tau$ to some $n\in B$. The net $\{N_{\gamma}\}$ converges to $N$ in the topology $V_{\tau}$, therefore $\Lim\limits_{\gamma} N_{\gamma} = N$ by Theorem~\ref{thm:Kuratow}. Since $n_{\gamma}\in N_{\gamma}$, by the definition of the Kuratowski limit, we have $n\in N$. Thus, we have $(m_{\gamma}, n_{\gamma})\to (m, n)$. From~\eqref{eq:d_H_eps} and from the lower semicontinuity of the pseudometric of $X$ on $B$ in the topology $\tau\times \tau$, it follows that
$$|m\, n|\le \liminf\limits_{\gamma} |m_{\gamma}\, n_{\gamma}| \le d_H(M, N) - 2\varepsilon.$$
Since $n\in N$, we have $|m\, N|\le |m\, n|\le d_H(M, N) - 2\varepsilon.$
But this contradicts~(\ref{eq:dH}).

Hence, $\liminf\limits_{\alpha} d_H(M_{\alpha}, N_{\alpha})\ge d_H(M, N)$, and therefore $d_H$ is lower semicontinuous on $\mathcal{P}_{\Cl_{\tau}}(B)\times \mathcal{P}_{\Cl_{\tau}}(B)$ in the topology $V_{\tau}\times V_{\tau}$.\qed

\end{proof}

\begin{definition}
Let $X$ be a pseudometric space and $\tau$ be a topology on $X$. Then $X$ is called \textit{compactly equipped with a topology} $\tau$ (or $\tau$-\textit{compactly equipped}) if the following conditions hold$:$
\begin{itemize}
\item each closed ball $B\subset X$ is $\tau$-compact$;$
\item for each closed ball $B\subset X$, the pseudometric of $X$ is lower semicontinuous on $B\times B$ with respect to the product topology $\tau\times \tau.$
\end{itemize}
\end{definition}

\begin{theorem}\label{thm:hyperspace}
Let $X$ be a pseudometric space and let a closed ball $B$ in $X$ be equipped with its own topology $\tau$ such that$:$
\begin{itemize}
\item $(B, \tau)$ is a compact Hausdorff topological space$;$
\item the pseudometric of $X$ is lower semicontinuous on $B\times B$ with respect to the product topology $\tau\times \tau$.
\end{itemize}
Then $\mathcal{P}_{\Cl_{\tau}}(B)$ is a $V_{\tau}$-compactly equipped $($and hence, compactly equipped$)$ space.
\end{theorem}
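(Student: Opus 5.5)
We must show that every closed ball $\mathbb{B}$ in the hyperspace $\mathcal{P}_{\Cl_{\tau}}(B)$, equipped with $d_H$, is $V_{\tau}$-compact, and that $d_H$ is lower semicontinuous on $\mathbb{B}\times\mathbb{B}$ with respect to $V_{\tau}\times V_{\tau}$. The lower semicontinuity is already available: Theorem~\ref{thm:dH_semicont} gives it on all of $\mathcal{P}_{\Cl_{\tau}}(B)\times\mathcal{P}_{\Cl_{\tau}}(B)$. Restricting to $\mathbb{B}\times\mathbb{B}$ preserves it, since nets in $\mathbb{B}\times\mathbb{B}$ are nets in the larger product, and the subspace topology on $\mathbb{B}\times\mathbb{B}$ is the restriction of $V_\tau\times V_\tau$. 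Once both conditions hold, the space is $V_\tau$-compactly equipped, and the parenthetical claim follows because each ball carries the induced topology.

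The plan is therefore to reduce compactness of $\mathbb{B}$ to closedness. Every element of $\mathcal{P}_{\Cl_{\tau}}(B)$ is a subset of the ball $B$ and hence bounded in $X$, so $d_H$ is finite there. By Theorem~\ref{thm:comp_hyp}, $\bigl(\mathcal{P}_{\Cl_{\tau}}(B),V_\tau\bigr)$ is compact Hausdorff, and a closed subset of a compact space is compact. So it suffices to show that
$$\mathbb{B}=\bigl\{N\in\mathcal{P}_{\Cl_{\tau}}(B)\colon d_H(M,N)\le r\bigr\}$$
is $V_\tau$-closed for each $M\in\mathcal{P}_{\Cl_{\tau}}(B)$ and each $r\ge 0$.

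For closedness, I would fix $M$ and use that the function $N\mapsto d_H(M,N)$ is $V_\tau$-lower semicontinuous. This follows from Theorem~\ref{thm:dH_semicont} applied along nets of the form $(M,N_\alpha)\to(M,N)$: a constant net converges in $V_\tau$, so such a net converges in $V_\tau\times V_\tau$. Proposition~\ref{prop:cont_level_0} then shows that its sublevel set $\{N\colon d_H(M,N)\le r\}$ is closed. That proposition is stated for real-valued $f$, which is fine here because $d_H$ is finite on subsets of the bounded set $B$.

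The main obstacle is essentially already handled by Theorem~\ref{thm:dH_semicont}, which was the genuinely nontrivial part. What remains is bookkeeping. One subtlety is that the hyperspace is only pseudometric: $d_H$ may vanish between distinct $\tau$-closed sets, because $\tau$-closed need not mean closed in the pseudometric. Definition~\ref{dfn:equipped} allows this, so no issue arises. The other is that "closed ball" in $\mathcal{P}_{\Cl_\tau}(B)$ means a ball centered at an element of this space, which is exactly what was handled above.
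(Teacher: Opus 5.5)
Your proposal is correct and follows the paper's proof: lower semicontinuity of $d_H$ comes from Theorem~\ref{thm:dH_semicont}, each ball is $V_\tau$-closed by Proposition~\ref{prop:cont_level_0}, and each ball is then compact as a closed subset of the compact space $\bigl(\mathcal{P}_{\Cl_{\tau}}(B),V_\tau\bigr)$ from Theorem~\ref{thm:comp_hyp}. You are also slightly more careful than the paper, since you spell out the passage from joint to one-variable lower semicontinuity (via nets $(M,N_\alpha)\to(M,N)$) and note that $d_H$ is finite, which Proposition~\ref{prop:cont_level_0} requires.
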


\begin{proof}

According to Theorem~\ref{thm:dH_semicont}, the Hausdorff distance $d_H$ is lower semicontinuous on $\mathcal{P}_{\Cl_{\tau}}(B)\times \mathcal{P}_{\Cl_{\tau}}(B)$ with respect to $V_{\tau}\times V_{\tau}$.

Let $\mathbb{B}$ be a closed ball in $\mathcal{P}_{\Cl_{\tau}}(B)$. Since $d_H$ is lower semicontinuous on $\bigl(\mathcal{P}_{\Cl_{\tau}}(B), V_{\tau}\bigr)$ and $\mathbb{B}$ is a closed ball of $\mathcal{P}_{\Cl_{\tau}}(B)$, then by Proposition~\ref{prop:cont_level_0} the ball $\mathbb{B}$ is $V_{\tau}$-closed. Theorem~\ref{thm:comp_hyp} implies that $\bigl(\mathcal{P}_{\Cl_{\tau}}(B), V_{\tau}\bigr)$ is a compact space. Hence, $\mathbb{B}$ is $V_{\tau}$-compact in $\mathcal{P}_{\Cl_{\tau}}(B)$.\qed

\end{proof}

Theorem~\ref{thm:hyperspace} and Theorem~\ref{thm:Ambrosio} imply the following corollary.

\begin{corollary}\label{cor:hyperspace_param_net}
Let $X$ be a pseudometric space and let a closed ball $B$ in $X$ be equipped with its own topology $\tau$ such that$:$
\begin{itemize}
\item $(B, \tau)$ is a compact Hausdorff topological space$;$
\item the pseudometric of $X$ is lower semicontinuous on $B\times B$ with respect to the product topology $\tau\times \tau$.
\end{itemize}
Then, for any finite set $\mathcal{A}\subset \mathcal{P}_{\Cl_{\tau}}(B)$ and any connected graph $G = (V, E)$ with boundary $\mathcal{A}$, there exists a minimal parametric network of type $G$ joining $\mathcal{A}$.
\end{corollary}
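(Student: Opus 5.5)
This follows by chaining Theorem~\ref{thm:hyperspace} with Theorem~\ref{thm:Ambrosio}. The only work is to check that the hyperspace $\mathcal{P}_{\Cl_{\tau}}(B)$, with the Hausdorff distance $d_H$, is a legitimate pseudometric space to which Theorem~\ref{thm:Ambrosio} applies.

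First, $d_H$ is a pseudometric on $\mathcal{P}_{\Cl_{\tau}}(B)$. Every element is a subset of the ball $B$, so $d_H$ is finite: for $M,N\subset B=B_R(b)$ we have $d_H(M,N)\le 2R$. Symmetry, $d_H(M,M)=0$ and the triangle inequality hold as noted after Definition~\ref{distance_H2}. Possible degeneracy ($d_H=0$ for distinct sets) is harmless, since the paper works with pseudometrics throughout.

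Second, Theorem~\ref{thm:hyperspace} applies verbatim under the stated hypotheses. It says $\mathcal{P}_{\Cl_{\tau}}(B)$ is $V_{\tau}$-compactly equipped: each closed $d_H$-ball $\mathbb{B}$ is $V_{\tau}$-compact, and by Theorem~\ref{thm:dH_semicont} $d_H$ is lower semicontinuous on the whole product, hence on $\mathbb{B}\times\mathbb{B}$ with the induced topology. So each closed ball carries its own topology (the restriction of $V_{\tau}$) satisfying Definition~\ref{dfn:equipped}.

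The one point needing care is that lower semicontinuity passes to $\mathbb{B}\times\mathbb{B}$ with the subspace topology. Nets in the subspace converge in the ambient space, so $\liminf$ inequalities are preserved. Equivalently, sublevel sets relatively intersect, using Proposition~\ref{prop:cont_level_0}. Compactness of $\mathbb{B}$ itself comes from it being $V_\tau$-closed in the compact space of Theorem~\ref{thm:comp_hyp}.

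Finally, apply Theorem~\ref{thm:Ambrosio} to the compactly equipped pseudometric space $\bigl(\mathcal{P}_{\Cl_{\tau}}(B),d_H\bigr)$, a finite set $\mathcal{A}$ in it, and any connected graph $G$ with $\partial G=\mathcal{A}$. This yields a minimal parametric network of type $G$ joining $\mathcal{A}$. No step is a real obstacle; the argument is bookkeeping over results already proved.
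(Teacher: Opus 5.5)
Your proposal is correct and follows the paper's own route: the paper obtains this corollary directly by combining Theorem~\ref{thm:hyperspace} (that $\mathcal{P}_{\Cl_{\tau}}(B)$ is $V_{\tau}$-compactly equipped) with Theorem~\ref{thm:Ambrosio}. Your extra checks are the routine bookkeeping the paper leaves implicit, namely that $d_H$ is a finite pseudometric on subsets of $B$ and that lower semicontinuity restricts to $\mathbb{B}\times\mathbb{B}$ with the subspace topology.
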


\begin{corollary}\label{cor:hyperspace_St}
Let $X$ be a pseudometric space and let a closed ball $B$ in $X$ be equipped with its own topology $\tau$ such that$:$
\begin{itemize}
\item $(B, \tau)$ is a compact Hausdorff topological space$;$
\item the pseudometric of $X$ is lower semicontinuous on $B\times B$ with respect to the product topology $\tau\times \tau$.
\end{itemize}
Then, for any finite set $\mathcal{A}\subset \mathcal{P}_{\Cl_{\tau}}(B)$, there exists a minimal Steiner tree joining $\mathcal{A}$.
\end{corollary}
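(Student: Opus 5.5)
The statement is Corollary~\ref{cor:hyperspace_St}. I would prove it as a direct combination of Theorem~\ref{thm:hyperspace} and Corollary~\ref{cor:Ambrosio}, with Remark~\ref{rk:finite_set} underneath.

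\emph{Step 1.} By Theorem~\ref{thm:hyperspace}, the hyperspace $\mathcal{P}_{\Cl_{\tau}}(B)$, with the Hausdorff distance $d_H$, is $V_{\tau}$-compactly equipped. Concretely, every closed ball $\mathbb{B}$ in it is $V_{\tau}$-compact, and $d_H$ is lower semicontinuous with respect to $V_{\tau}\times V_{\tau}$ on the whole product, hence on $\mathbb{B}\times\mathbb{B}$. Equipping each closed ball $\mathbb{B}$ with the induced topology $V_{\tau}|_{\mathbb{B}}$ shows that $\mathcal{P}_{\Cl_{\tau}}(B)$ is compactly equipped in the sense of Definition~\ref{dfn:equipped}.

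\emph{Step 2.} I check that $\bigl(\mathcal{P}_{\Cl_{\tau}}(B), d_H\bigr)$ is a genuine pseudometric space, so that Corollary~\ref{cor:Ambrosio} applies. Every element is a subset of the ball $B$, which is bounded, so $d_H$ is finite. Symmetry and the triangle inequality were noted after Definition~\ref{distance_H2}.

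\emph{Step 3.} Applying Corollary~\ref{cor:Ambrosio} to this space gives a minimal Steiner tree for any finite $\mathcal{A}\subset\mathcal{P}_{\Cl_{\tau}}(B)$. Equivalently, Corollary~\ref{cor:hyperspace_param_net} gives a minimal parametric network for every connected type $G$ with $\partial G=\mathcal{A}$. By Remark~\ref{rk:finite_set}, $\smt(\mathcal{A})$ is a minimum over finitely many types, so it is attained by some $g\in[G,\mathcal{A}]$.

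\emph{Step 4.} Among all pairs $(G,g)$ with $|g|=\smt(\mathcal{A})$, I choose one minimizing the number of interior vertices. Such a pair exists because these numbers are nonnegative integers. By Definition~\ref{dfn:mst}, this $g$ is a minimal Steiner tree.

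\emph{Expected obstacle.} The only step needing care is confirming that the $V_{\tau}$-compactly equipped property delivers exactly the per-ball data required by Definition~\ref{dfn:equipped}: a compact topology on each ball together with lower semicontinuity on that ball squared. This holds because the restriction of a lower semicontinuous function to a subspace is lower semicontinuous in the subspace topology. Everything else is bookkeeping.
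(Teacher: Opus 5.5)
Your proposal is correct and is essentially the paper's argument: Theorem~\ref{thm:hyperspace} makes $\mathcal{P}_{\Cl_{\tau}}(B)$ compactly equipped, and Theorem~\ref{thm:Ambrosio} combined with Remark~\ref{rk:finite_set} (that is, Corollary~\ref{cor:Ambrosio}) then gives the minimal Steiner tree. Your extra checks (finiteness of $d_H$ on subsets of the bounded ball $B$, restricting lower semicontinuity to $\mathbb{B}\times\mathbb{B}$ with the induced topology, and choosing a pair $(G,g)$ with the fewest interior vertices) simply spell out what the paper leaves implicit.
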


%%%%%%%%%%%%%%%%%%%%%%%%%%%%%%
\subsection{Existence Theorems in Hyperspaces Over Dual Spaces}\label{sec:hyp_dual}
%%%%%%%%%%%%%%%%%%%%%%%%%%%%%%

The following theorem is a direct consequence of Theorem~\ref{thm:hyperspace}.

\begin{theorem}\label{thm:hyperspace_param_net_tau}
Let $X$ be a pseudometric space and $\tau$ be a Hausdorff topology on $X$ such that $X$ is $\tau$-compactly equipped. Then $\mathcal{P}_{\Cl_{\tau}, \B}(X)$ is a $V_{\tau}$-compactly equipped $($and hence, compactly equipped$)$ space.
\end{theorem}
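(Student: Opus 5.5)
The plan is to reduce everything to Theorem~\ref{thm:hyperspace} via Lemma~\ref{lem:ball}. Fix a closed ball $\mathbb{B}$ in $\mathcal{P}_{\Cl_{\tau}, \B}(X)$, centered at some $M$ and of radius $r$. All elements of this hyperspace are bounded, so the proof of Lemma~\ref{lem:ball} applies: it gives a closed ball $B = B_{r+R}(m)\subset X$ with $M\subset B_R(m)$ such that every $N\in\mathbb{B}$ lies in $B$. Moreover,
$$\mathbb{B} = \bigl\{N\in \mathcal{P}_{\Cl_{\tau},\B}(X)\colon N\subset B,\ d_H(M,N)\le r\bigr\}.$$

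The first step is to identify $\mathcal{P}_{\Cl_{\tau},\B}(X)$ restricted to subsets of $B$ with $\mathcal{P}_{\Cl_{\tau}}(B)$, where $B$ carries the subspace topology $\tau|_B$.

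\begin{itemize}
\item Since $\tau$ is Hausdorff and $B$ is $\tau$-compact, $B$ is $\tau$-closed in $X$.
\item Hence a subset $N\subset B$ is $\tau$-closed in $X$ if and only if it is $\tau|_B$-closed.
\item Every such $N$ is automatically bounded, being contained in $B$.
\item The space $(B,\tau|_B)$ is compact Hausdorff, and the pseudometric is lower semicontinuous on $B\times B$ by hypothesis.
\end{itemize}

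So the hypotheses of Theorem~\ref{thm:hyperspace} hold for $B$. I also need the Vietoris topology $V_{\tau}$ induced on these sets from $\mathcal{P}_{\Cl_\tau}(X)$ to agree with the Vietoris topology of $(B,\tau|_B)$. On subsets of $B$, the conditions $N\cap U\ne\emptyset$ and $N\subset U$ depend only on $U\cap B$, and every $\tau|_B$-open set has the form $U\cap B$. Hence the two subbases coincide.

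Next, by Theorem~\ref{thm:hyperspace}, $\mathcal{P}_{\Cl_{\tau}}(B)$ is $V_\tau$-compactly equipped. Since $M\in\mathcal{P}_{\Cl_{\tau}}(B)$, the set $\mathbb{B}$ is exactly the closed ball of radius $r$ centered at $M$ in $\mathcal{P}_{\Cl_\tau}(B)$. It is therefore $V_\tau$-compact there, and hence compact as a subset of $\mathcal{P}_{\Cl_{\tau},\B}(X)$, because compactness is intrinsic to the subspace topology.

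For lower semicontinuity on $\mathbb{B}\times\mathbb{B}$, apply Theorem~\ref{thm:dH_semicont} to $B$. It shows that $d_H$ is lower semicontinuous on $\mathcal{P}_{\Cl_\tau}(B)\times\mathcal{P}_{\Cl_\tau}(B)$. Restricting to the subset $\mathbb{B}\times\mathbb{B}$ keeps it lower semicontinuous in the product of the induced topologies.

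The main obstacle is the topological bookkeeping in the first step. The point needing care is the use of the Hausdorff assumption on $\tau$ over all of $X$: it makes $B$ $\tau$-closed, so that $\tau$-closedness in $X$ and in $B$ agree. The rest is a direct citation.
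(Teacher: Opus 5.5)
Your proof is correct and takes the same approach as the paper. You use Lemma~\ref{lem:ball} to place the ball $\mathbb{B}$ inside $\mathcal{P}_{\Cl_{\tau}}(B)$ for a closed ball $B\subset X$, then invoke Theorem~\ref{thm:hyperspace} for compactness and Theorem~\ref{thm:dH_semicont} for lower semicontinuity. Beyond the paper, you spell out bookkeeping it leaves implicit: $B$ is $\tau$-closed because $\tau$ is Hausdorff, so $\tau$-closed subsets of $X$ lying in $B$ are exactly the $\tau|_B$-closed ones, and the induced Vietoris topologies coincide.
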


\begin{proof}

Let $\mathbb{B}$ be a closed ball in $\mathcal{P}_{\Cl_{\tau}, \B}(X)$. According to Lemma~\ref{lem:ball}, there is a closed ball $B\subset X$ such that $\mathbb{B}$ is a closed ball in $\mathcal{P}_{\Cl_{\tau}}(B)$. By Theorem~\ref{thm:hyperspace}, the hyperspace $\mathcal{P}_{\Cl_{\tau}}(B)$ is a $V_{\tau}$-compactly equipped space. Hence, $\mathbb{B}$ is $V_{\tau}$-compact and the Hausdorff distance is lower semicontinuous on $\mathbb{B}\times \mathbb{B}$ with respect to the product topology $V_{\tau}\times V_{\tau}$.\qed

\end{proof}

Note that by definition, the weak$^*$ topology is the weakest topology on $X^*$ in which all evaluation functionals from $X$ are continuous. As we have shown in Section~\ref{sec:ref_space}, the evaluation functionals are continuous with respect to the operator norm of the dual space. Consequently, the weak$^*$ topology, as a collection of subsets, is contained in the metric topology of the dual space. Since the Hausdorff distance $d_H$ is a metric on the set of nonempty closed (in a metric sense) and bounded subsets of any metric space (in particular, dual), on the set of nonempty weak$^*$-closed and bounded subsets of the dual space, that distance $d_H$ is also a metric.

By Corollary~\ref{cor:Haus_loc}, the weak$^*$ topology is a Hausdorff topology on $X^*$ and $X^*$ is weak$^*$-compactly equipped according to Theorem~\ref{thm:dual}. Thus, Theorem~\ref{thm:hyperspace_param_net_tau} gives us the following theorem.

\begin{theorem}\label{thm:hyp_tau_comp}
Let $X$ be a dual space and $\tau$ be the weak$^*$ topology on $X$. Then $\mathcal{P}_{\Cl_{\tau}, \B}(X)$ is a $V_{\tau}$-compactly equipped $($and hence, compactly equipped$)$ metric space.
\end{theorem}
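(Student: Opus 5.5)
The plan is to reduce the statement to Theorem~\ref{thm:hyperspace_param_net_tau} and then check the one extra claim that $d_H$ is a genuine metric, not merely a pseudometric, on $\mathcal{P}_{\Cl_{\tau}, \B}(X)$.

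\emph{Step 1 (hypotheses of Theorem~\ref{thm:hyperspace_param_net_tau}).} Let $\tau$ be the weak$^*$ topology on the dual space $X$. By Corollary~\ref{cor:Haus_loc}, $\tau$ is Hausdorff. We also need $X$ to be $\tau$-compactly equipped. Each closed ball of $X$ is $\tau$-compact by Theorem~\ref{thm:Ban_Ala}. The norm metric is lower semicontinuous on $X\times X$ with respect to $\tau\times\tau$ by Theorem~\ref{thm:dual} (equivalently, by Proposition~\ref{prop:semicont_for_dual}). Restricting to $B\times B$ preserves lower semicontinuity, since the subspace product topology on $B\times B$ coincides with the restriction of $\tau\times\tau$. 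So both conditions of the definition hold for every closed ball $B$.

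\emph{Step 2 (compactly equipped).} Theorem~\ref{thm:hyperspace_param_net_tau} now gives directly that $\mathcal{P}_{\Cl_{\tau}, \B}(X)$ with $d_H$ is $V_{\tau}$-compactly equipped, and hence compactly equipped.

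\emph{Step 3 (metric property).} This is the only point needing care. Since the norm metric is continuous on $X$ and every evaluation functional $e_y$ is norm-continuous (Section~\ref{sec:ref_space}), we have $\tau\subset$ the norm topology. Hence every $\tau$-closed set is norm-closed. So $\mathcal{P}_{\Cl_{\tau}, \B}(X)$ is a subfamily of the nonempty, norm-closed, bounded subsets of $X$.

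On that family, $d_H$ is finite because the sets are bounded. It is symmetric and satisfies the triangle inequality, as noted after Definition~\ref{distance_H2}. By the second item of the proposition on Hausdorff distance, $d_H(A,C)=0$ for norm-closed $A,C$ forces $A=C$. Restricting $d_H$ to the subfamily keeps all these properties, so $d_H$ is a metric on $\mathcal{P}_{\Cl_{\tau}, \B}(X)$.

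I expect no real obstacle. The subtle point is Step~3: one has to notice that weak$^*$-closedness implies norm-closedness, since otherwise $d_H$ would only be a pseudometric.
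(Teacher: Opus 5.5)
Your proposal is correct and follows the paper's argument essentially verbatim. The paper likewise gets the Hausdorff property from Corollary~\ref{cor:Haus_loc} and weak$^*$-compact equippedness from Theorem~\ref{thm:dual}, then applies Theorem~\ref{thm:hyperspace_param_net_tau}. It also shows that $d_H$ is a genuine metric by the same observation you make: the weak$^*$ topology is coarser than the norm topology, so weak$^*$-closed sets are norm-closed.
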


Theorem~\ref{thm:hyp_tau_comp} and Theorem~\ref{thm:Ambrosio} give us the following corollary.

\begin{corollary}\label{cor:dual_hyperspace}
Let $X$ be a dual space and $\tau$ be the weak$^*$ topology on $X$. Then for any finite set $\mathcal{A}\subset \mathcal{P}_{\Cl_{\tau}, \B}(X)$ and any connected graph $G = (V, E)$ with boundary $\mathcal{A}$, there exists a minimal parametric network of type $G$ joining $\mathcal{A}$.
\end{corollary}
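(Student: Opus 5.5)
The plan is to obtain Corollary~\ref{cor:dual_hyperspace} as a direct application of Theorem~\ref{thm:Ambrosio} to the pseudometric space $\bigl(\mathcal{P}_{\Cl_{\tau}, \B}(X), d_H\bigr)$. The whole task therefore reduces to checking that this hyperspace is compactly equipped in the sense of Definition~\ref{dfn:equipped}. Theorem~\ref{thm:hyp_tau_comp} asserts exactly this, so we only need to confirm that its hypotheses hold, verifying the ingredients in order.

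First, we check that $X$ is $\tau$-compactly equipped, where $\tau$ is the weak$^*$ topology. By Theorem~\ref{thm:Ban_Ala}, every closed ball of $X$ is weak$^*$-compact. By Corollary~\ref{cor:Haus_loc}, $\tau$ is a Hausdorff locally convex linear topology. Proposition~\ref{prop:semicont_for_dual} (equivalently, Theorem~\ref{thm:dual}) then gives lower semicontinuity of the norm metric on $X\times X$, and hence on $B\times B$ for each closed ball $B$, with respect to $\tau\times\tau$.

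Second, we apply Theorem~\ref{thm:hyperspace_param_net_tau}. Every element of $\mathcal{P}_{\Cl_{\tau}, \B}(X)$ is bounded, so Lemma~\ref{lem:ball} applies. It places each closed $d_H$-ball $\mathbb{B}$ of the hyperspace inside $\mathcal{P}_{\Cl_{\tau}}(B)$ for a suitable closed ball $B\subset X$. Restricting $\tau$ to $B$ gives a compact Hausdorff space on which the metric is lower semicontinuous. Theorem~\ref{thm:hyperspace} then shows that $\mathbb{B}$ is $V_{\tau}$-compact. Theorem~\ref{thm:dH_semicont} shows that $d_H$ is lower semicontinuous on $\mathbb{B}\times\mathbb{B}$ in $V_{\tau}\times V_{\tau}$. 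Equipping each closed ball $\mathbb{B}$ with the restriction of $V_{\tau}$ therefore witnesses Definition~\ref{dfn:equipped}. Note that Definition~\ref{dfn:equipped} allows pseudometrics, so we do not even need the metric property of $d_H$ noted before Theorem~\ref{thm:hyp_tau_comp}.

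Finally, we fix a finite $\mathcal{A}\subset\mathcal{P}_{\Cl_{\tau}, \B}(X)$ and a connected graph $G$ with $\partial G=\mathcal{A}$. Theorem~\ref{thm:Ambrosio} applied to this compactly equipped space yields a minimal parametric network of type $G$. The only point needing care is a subtlety in Theorem~\ref{thm:hyperspace}: the ball $\mathbb{B}$ in $\mathcal{P}_{\Cl_{\tau}}(B)$ must be $V_{\tau}$-closed. This is where lower semicontinuity of $d_H$ in one variable (Proposition~\ref{prop:cont_level_0}) combines with compactness of $\bigl(\mathcal{P}_{\Cl_{\tau}}(B),V_{\tau}\bigr)$ from Theorem~\ref{thm:comp_hyp}. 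We also need that the sets in $\mathbb{B}$ are $\tau$-closed in $B$, which holds because they are $\tau$-closed in $X$. Everything else is a direct citation.
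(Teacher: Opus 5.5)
Your proposal is correct and follows the paper's route: the paper obtains this corollary by combining Theorem~\ref{thm:hyp_tau_comp} (the hyperspace $\mathcal{P}_{\Cl_{\tau}, \B}(X)$ is compactly equipped) with Theorem~\ref{thm:Ambrosio}, and your second paragraph unpacks the proof of Theorem~\ref{thm:hyp_tau_comp} via Lemma~\ref{lem:ball}, Theorem~\ref{thm:hyperspace} and Theorem~\ref{thm:dH_semicont}. Your remark that the metric property of $d_H$ is not needed, because Definition~\ref{dfn:equipped} allows pseudometrics, is also correct.
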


\begin{corollary}\label{cor:dual_hyperspace_St}
Let $X$ be a dual space and $\tau$ be the weak$^*$ topology on $X$. Then for any finite set $\mathcal{A}\subset \mathcal{P}_{\Cl_{\tau}, \B}(X)$, there exists a minimal Steiner tree joining $\mathcal{A}$.
\end{corollary}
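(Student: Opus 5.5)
The last statement is Corollary~\ref{cor:dual_hyperspace_St}: for a dual space $X$ with the weak$^*$ topology $\tau$, every finite set $\mathcal{A}\subset \mathcal{P}_{\Cl_{\tau}, \B}(X)$ can be joined by a minimal Steiner tree. The plan is to imitate the derivation of Corollary~\ref{cor:Ambrosio} from Theorem~\ref{thm:Ambrosio} and Remark~\ref{rk:finite_set}, now working in the hyperspace $Y=\mathcal{P}_{\Cl_{\tau}, \B}(X)$ with the Hausdorff distance $d_H$.

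\textbf{Step 1 (existence for each type).} By Theorem~\ref{thm:hyp_tau_comp}, $Y$ is a compactly equipped metric space. Corollary~\ref{cor:dual_hyperspace}, which is Theorem~\ref{thm:Ambrosio} applied to $Y$, then shows that for every connected graph $G$ with $\partial G=\mathcal{A}$ there is a network $g_G\in[G,\mathcal{A}]$ with $|g_G|=\mpn[G,\mathcal{A}]$.

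\textbf{Step 2 (a type attaining $\smt$).} By Remark~\ref{rk:finite_set}, applied with $Y$ as the ambient pseudometric space, $\smt(\mathcal{A})$ is a minimum over a finite family of types. So there is a type $G_0$ with $\mpn[G_0,\mathcal{A}]=\smt(\mathcal{A})$, and the network $g_{G_0}$ satisfies $|g_{G_0}|=\smt(\mathcal{A})$.

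\textbf{Step 3 (minimizing interior vertices).} Definition~\ref{dfn:mst} also requires that no competing network of length $\smt(\mathcal{A})$ has fewer interior vertices. Among all pairs $(G',g')$ with $\partial G'=\mathcal{A}$ and $|g'|=\smt(\mathcal{A})$, the number $\#(V'\setminus\mathcal{A})$ is a nonnegative integer. The set of such pairs is nonempty by Step~2, so we can choose a pair minimizing this number. The chosen network is a minimal Steiner tree by definition.

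This step is the only subtle point. Remark~\ref{rk:finite_set} bounds the number of interior vertices by $n-2$ only for minimal Steiner trees, so a priori some type with many interior vertices could attain $\smt$. The well-ordering argument in Step~3 avoids this issue, because it needs no bound at all. The finiteness in Step~2 is still needed to turn the infimum into an attained minimum. That finiteness holds because each $\mpn$ over a large type is at least the $\mpn$ of some type with at most $n-2$ interior vertices: reduce interior vertices of degree at most two without increasing length, which uses only the triangle inequality, as in Proposition~\ref{prop:int_vert}.
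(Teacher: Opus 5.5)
Your proposal is correct and follows the paper's route: Theorem~\ref{thm:hyp_tau_comp} and Theorem~\ref{thm:Ambrosio} give a minimal network for each type, and Remark~\ref{rk:finite_set} reduces to finitely many types. Your Step~3 and your non-circular finiteness argument spell out what the paper's remark glosses over, but that reduction sketch needs one extra step: first pass to a spanning tree and delete interior leaves (removing edges never increases length), because the bound $\#(V\setminus\mathcal{A})\le n-2$ comes from the degree count in a tree whose interior vertices have degree at least $3$, and it fails for graphs with cycles.
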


We now give a brief example-remark. Before formulating it, let us say that a topological space is called \textit{locally compact} if every point has a neighbourhood whose closure is compact. In this example-remark we shall also use such notions as \textit{finite signed measure} and \textit{complex measure}, which are defined, for instance, in~\cite{Cohn}, Subsection~4.1. A measure whose domain is the Borel $\sigma$-algebra of some topological space is called a \textit{Borel measure}. The notion of finite signed (or complex) Borel \textit{regular measure}, which will also be needed below, is introduced in~\cite{Cohn}, Subsection~7.3. Also, $C_0(X)$ will denote the Banach space of all continuous functions on $X$ such that for every $f\in C_0(X)$ and every $\varepsilon > 0$ there exists a compact set outside which $|f| < \varepsilon$. Such functions $f$ are said to \textit{vanish at infinity}. Accordingly, by $C_0^{\mathbb{C}}(X)$ we shall mean the space of all continuous complex-valued functions on $X$ that vanish at infinity. Also Cohn in the book~\cite{Cohn} denotes by $M_r(X,\mathbb{R})$ the set of all finite signed regular Borel measures on $X$ and by $M_r(X,\mathbb{C})$ the set of all complex regular Borel measures on $X$. We shall need the following theorem.

\begin{theorem}[{\cite[Theorem~7.3.6]{Cohn}}]\label{thm:cohn}
Let $X$ be a locally compact Hausdorff space. Then the map that takes the finite signed $($or complex$)$ regular Borel measure $\mu$ to the functional 
$$F\colon C_0(X)\rightarrow \mathbb{R}\,\,\, \bigl(F\colon C_0^{\mathbb{C}}(X)\rightarrow \mathbb{C}\bigr),$$
$$F\colon f\mapsto \int_X f \,d\mu$$
is an isometric isomorphism of the Banach space $M_r(X,\mathbb{R})$ $\bigl($or $M_r(X,\mathbb{C})\bigr)$ onto the dual of the Banach space $C_0(X)$ $\bigl($or $C_0^{\mathbb{C}}(X)\bigr)$.
\end{theorem}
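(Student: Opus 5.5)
This is the Riesz representation theorem for $C_0(X)$, cited from Cohn, so in context it is taken as given. Still, here is how I would prove it from standard measure theory.

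\emph{Step 1: the map is linear and norm-decreasing.} For $\mu\in M_r(X,\mathbb{R})$ set $F_\mu(f)=\int_X f\,d\mu$. Then $F_\mu$ is linear and
$$\bigl|F_\mu(f)\bigr|\le \|f\|_\infty\,|\mu|(X).$$
Hence $\|F_\mu\|\le\|\mu\|$, where $\|\mu\|=|\mu|(X)$ is the total variation norm.

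\emph{Step 2: the map is isometric.} For the reverse inequality, take the Hahn decomposition $X=P\cup N$ of $\mu$. By regularity of $|\mu|$, choose compact sets $K_1\subset P$ and $K_2\subset N$ carrying almost all of $\mu^+$ and $\mu^-$ respectively. Urysohn's lemma for locally compact Hausdorff spaces then gives $f\in C_c(X)$ with $|f|\le1$, $f=1$ on $K_1$ and $f=-1$ on $K_2$. For this $f$, $F_\mu(f)$ is close to $|\mu|(X)$. Since isometry implies injectivity, it remains only to show surjectivity.

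\emph{Step 3: surjectivity, the main obstacle.} Given a bounded functional $F$ on $C_0(X)$, first write it as $F=F^+-F^-$ with both parts positive. Define
$$F^+(f)=\sup\{F(g)\colon 0\le g\le f\}\quad\text{for } f\ge0,$$
check that it is additive on nonnegative functions, and extend it linearly. Next, represent each positive functional by the Riesz--Markov construction:
\begin{itemize}
\item on open sets $U$, put $\mu(U)=\sup\{\Lambda f\colon f\prec U\}$;
\item extend by outer measure, $\mu(A)=\inf\{\mu(U)\colon U\supset A \text{ open}\}$;
\item verify Carathéodory measurability of Borel sets using partitions of unity, and derive inner regularity on open sets;
\item show $\Lambda f=\int f\,d\mu$ by slicing $f$ into level layers.
\end{itemize}
Finiteness of $\mu$ follows from the boundedness of $\Lambda$. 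This construction and its regularity verification are the technical heart of the proof.

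\emph{Step 4: the complex case.} Split $F$ into its real and imaginary parts, each of which is a real functional on $C_0(X)$. Apply the real case to each, and combine the resulting measures. The isometry argument carries over by using the polar decomposition $d\mu=h\,d|\mu|$ and approximating $\bar h$ by continuous functions via Lusin's theorem.
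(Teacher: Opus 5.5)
The paper gives no proof of this statement; it simply cites Cohn (Theorem~7.3.6). Your outline is the standard proof of that theorem and it is correct: split a bounded functional into positive parts, apply the Riesz--Markov construction to each, and get the isometry from a Hahn decomposition plus Urysohn, or Lusin in the complex case. The routine points you leave implicit are these: the Riesz--Markov step gives $\Lambda f=\int f\,d\mu$ only for $f\in C_c(X)$, so you extend to $C_0(X)$ by density using finiteness of $\mu$; finite regular measures are inner regular on all Borel sets, which you need to find $K_1\subset P$; and $\mu_1-\mu_2$ is again regular because its variation is dominated by the regular measure $\mu_1+\mu_2$.
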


\begin{remark}
Thus, by virtue of Theorem~\ref{thm:cohn}, Corollary~\ref{cor:dual_equip}, and Theorem~\ref{thm:Ambrosio}, the Steiner problem of finding minimal (parametric) networks in the space $M_r(X,\mathbb{R})$ (or $M_r(X,\mathbb{C})$) is solvable, where $X$ is a locally compact Hausdorff space. Moreover, by Corollaries~\ref{cor:dual_hyperspace} and~\ref{cor:dual_hyperspace_St}, the Steiner problem is also solvable in the hyperspace of all nonempty weak$^*$-closed and bounded subsets of the Banach space $M_r(X,\mathbb{R})$ (or $M_r(X,\mathbb{C})$).
\end{remark}

%%%%%%%%%%%%%%%%%%%%%%%%%%%%%%
\subsection{Existence Theorems in Hyperspaces Over Reflexive Spaces}\label{sec:hyp_ref}
%%%%%%%%%%%%%%%%%%%%%%%%%%%%%%

\begin{lemma}\label{lem:hyp_closed}
Let $X$ be a dual space, $B$ be a closed ball in $X$ and $\tau$ be the weak$^*$ topology on $X$. Then $\mathcal{P}_{\Cl_{\tau}, \Conv}(B)$ is $V_{\tau}$-closed in $\mathcal{P}_{\Cl_{\tau}}(B)$.
\end{lemma}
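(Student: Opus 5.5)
We will show that the complement is open, or equivalently that a $V_{\tau}$-limit of convex sets is convex, working with nets. Let $\{A_{\lambda}\}_{\lambda\in\Lambda}\subset \mathcal{P}_{\Cl_{\tau}, \Conv}(B)$ converge in $V_{\tau}$ to some $A\in \mathcal{P}_{\Cl_{\tau}}(B)$. We must show that $A$ is convex; it is already nonempty and $\tau$-closed.

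First we fix the setting. By Theorem~\ref{thm:Ban_Ala}, $(B,\tau)$ is compact, and by Corollary~\ref{cor:Haus_loc} it is Hausdorff. Hence Theorem~\ref{thm:Kuratow} applies and gives $\Lim\limits_{\lambda} A_{\lambda} = A$. In particular $A=\Li A_{\lambda}$, which is what the convexity argument needs.

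Next, fix $x,y\in A$ and $t\in[0,1]$, and put $z=tx+(1-t)y$. Note that $z\in B$, since the ball $B$ is convex. We show $z\in \Li A_{\lambda}$. Let $W$ be a $\tau$-neighborhood of $z$. The map $(u,v)\mapsto tu+(1-t)v$ is $\tau\times\tau$-continuous, because the weak$^*$ topology is a linear topology. So there are $\tau$-neighborhoods $U$ of $x$ and $V$ of $y$ with $tU+(1-t)V\subset W$. Since $x,y\in\Li A_{\lambda}$, there are indices $\lambda_1,\lambda_2$ with $A_{\lambda}\cap U\neq\emptyset$ for $\lambda\ge\lambda_1$ and $A_{\lambda}\cap V\neq\emptyset$ for $\lambda\ge\lambda_2$. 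Take $\lambda_0$ dominating both, which is possible because $\Lambda$ is directed. For $\lambda\ge\lambda_0$, pick $u_{\lambda}\in A_{\lambda}\cap U$ and $v_{\lambda}\in A_{\lambda}\cap V$. By convexity, $tu_{\lambda}+(1-t)v_{\lambda}\in A_{\lambda}\cap W$. Thus $z\in\Li A_{\lambda}=A$, so $A$ is convex.

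Finally, this shows that $\mathcal{P}_{\Cl_{\tau}, \Conv}(B)$ contains the limits of all its $V_{\tau}$-convergent nets. Hence it is $V_{\tau}$-closed in $\mathcal{P}_{\Cl_{\tau}}(B)$.

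The only delicate point is using the neighborhoods relative to $B$ correctly. Points of $A_{\lambda}$ lie in $B$, and the neighborhoods in the Kuratowski limit are $\tau$-neighborhoods in $(B,\tau)$. We handle this by taking $U,V,W$ open in $X^*$ and intersecting with $B$; continuity of the affine combination is unaffected because $B$ is convex. A secondary check is that Theorem~\ref{thm:Kuratow} applies to $(B,\tau|_B)$ rather than to the whole space, which holds since $(B,\tau|_B)$ is compact Hausdorff.
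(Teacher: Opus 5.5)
Your proof is correct and follows the paper's outline. You pass from $V_\tau$-convergence to Kuratowski convergence via Theorem~\ref{thm:Kuratow}, using Theorem~\ref{thm:Ban_Ala} and Corollary~\ref{cor:Haus_loc}. You then use linearity of the weak$^*$ topology and convexity of each $A_\lambda$ to place $z=tx+(1-t)y$ in the limit.

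The one real difference is at the key step. The paper picks points $x_\alpha,y_\alpha\in S_\alpha$ with $x_\alpha\to x$ and $y_\alpha\to y$, indexed by the same directed set, and then passes the net $tx_\alpha+(1-t)y_\alpha$ to its limit $z$. You instead check the defining neighborhood condition for $z\in\Li A_\lambda$ directly, pulling a neighborhood $W$ of $z$ back to $U\times V$ with $tU+(1-t)V\subset W$.

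Your version is the more careful one. Membership in the lower limit does not in general give a selection converging along the same index set when the topology is not first countable, and the weak$^*$ topology on a ball need not be. For example, in the dual of $\ell_\infty$, let $S_n$ be the convex hull of the first $n$ point evaluations. An evaluation at a free ultrafilter then lies in $\Lim_n S_n$, but it is not the weak$^*$ limit of any sequence $x_n\in S_n$. The paper's step can be repaired by passing to a suitable subnet, reindexed over indices and neighborhoods, but your argument needs no repair.

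You also handle relative neighborhoods in $B$ explicitly, using convexity of $B$, which the paper leaves implicit.
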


\begin{proof}

Consider a net $\{S_{\alpha}\}_{\alpha\in \Lambda}\subset \mathcal{P}_{\Cl_{\tau}, \Conv}(B)$ which is convergent to $S\in \mathcal{P}_{\Cl_{\tau}}(B)$ with respect to $V_{\tau}$. Show that for any $x, y\in S$ and $t\in (0, 1)$, it holds $tx + (1-t)y \in S$. 

By Corollary~\ref{cor:Haus_loc} and Theorem~\ref{thm:Ban_Ala}, the weak$^*$ topology $\tau$ is a compact Hausdorff topology on $B$, therefore $\Lim\limits_{\alpha} S_{\alpha} = S$ by Theorem~\ref{thm:Kuratow}. So, for points $x, y\in S$, there exist points $x_{\alpha}, y_{\alpha}\in S_{\alpha}$ such that $x_{\alpha}\to x$ and $y_{\alpha}\to y$ with respect to $\tau$. The weak$^*$ topology is linear, therefore 
$$z_{\alpha} = tx_{\alpha} + (1-t)y_{\alpha} \to z = tx + (1-t)y.$$
Since each $S_{\alpha}$ is convex, $z_{\alpha}\in S_{\alpha}$. By the definition of the Kuratowski limit, it means that $z\in \Lim\limits_{\alpha} S_{\alpha} = S$. Thus, $S\in \mathcal{P}_{\Cl_{\tau}, \Conv}(B)$ and so $\mathcal{P}_{\Cl_{\tau}, \Conv}(B)$ is $V_{\tau}$-closed in $\mathcal{P}_{\Cl_{\tau}}(B)$.\qed

\end{proof}

\begin{theorem}\label{thm:conv_hyperspace}
Let $X$ be a reflexive space and $\tau$ be the weak$^*$ topology on $X$. Then $\mathcal{P}_{\Cl, \B, \Conv}(X)$ is a $V_{\tau}$-compactly equipped $($and hence, compactly equipped$)$ metric space.
\end{theorem}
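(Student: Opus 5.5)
The plan is to reduce everything to Theorem~\ref{thm:hyperspace} together with Lemma~\ref{lem:hyp_closed}, using reflexivity to identify the relevant families of sets.

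\textbf{Step 1: identify the families of sets.} Since $X$ is reflexive, it is a dual space (of $X^*$). On it the weak$^*$ topology $\tau$ coincides with $\sigma(X,X^*)$. By Corollary~\ref{cor:equal}, the nonempty closed convex sets are exactly the nonempty $\tau$-closed convex sets. Hence
$$\mathcal{P}_{\Cl, \B, \Conv}(X)=\mathcal{P}_{\Cl_{\tau}, \B, \Conv}(X)\subset \mathcal{P}_{\Cl_{\tau}}(X),$$
so the Vietoris topology $V_\tau$ is well defined on our hyperspace. Because norm-closed bounded sets are separated by $d_H$, the space is a genuine metric space.

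\textbf{Step 2: localize a ball.} Fix a closed ball $\mathbb{B}\subset \mathcal{P}_{\Cl, \B, \Conv}(X)$. By Lemma~\ref{lem:ball} there is a closed ball $B\subset X$ such that $\mathbb{B}$ is a closed ball in $\mathcal{P}_{\Cl, \B, \Conv}(B)$. This family equals $\mathcal{P}_{\Cl_{\tau}, \Conv}(B)$: a $\tau$-closed subset of $B$ is automatically bounded. Moreover, $\tau$-closedness in $B$ agrees with $\tau$-closedness in $X$, because $B$ is $\tau$-compact, hence $\tau$-closed in the Hausdorff space $(X,\tau)$.

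\textbf{Step 3: the ambient hyperspace.} By Theorem~\ref{thm:Ban_Ala} and Corollary~\ref{cor:Haus_loc}, $(B,\tau)$ is compact Hausdorff. By Theorem~\ref{thm:dual}, the metric is $\tau\times\tau$-lower semicontinuous on $B\times B$. Theorem~\ref{thm:hyperspace} therefore applies: $\mathcal{P}_{\Cl_{\tau}}(B)$ is $V_\tau$-compactly equipped. Concretely, by Theorem~\ref{thm:dH_semicont}, $d_H$ is lower semicontinuous in $V_\tau\times V_\tau$, and $\mathcal{P}_{\Cl_{\tau}}(B)$ is $V_\tau$-compact by Theorem~\ref{thm:comp_hyp}.

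\textbf{Step 4: compactness of $\mathbb{B}$.} By Lemma~\ref{lem:hyp_closed}, $\mathcal{P}_{\Cl_{\tau}, \Conv}(B)$ is $V_\tau$-closed in $\mathcal{P}_{\Cl_{\tau}}(B)$. Write the center of $\mathbb{B}$ as $M$ and its radius as $r$. The set $\{N\in\mathcal{P}_{\Cl_{\tau}}(B)\colon d_H(M,N)\le r\}$ is $V_\tau$-closed by Proposition~\ref{prop:cont_level_0}, since lower semicontinuity of $d_H$ in the product topology gives lower semicontinuity of $d_H(M,\cdot)$. Therefore $\mathbb{B}$, being the intersection of this set with $\mathcal{P}_{\Cl_{\tau}, \Conv}(B)$, is a closed subset of a compact space and hence $V_\tau$-compact. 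Lower semicontinuity of $d_H$ on $\mathbb{B}\times\mathbb{B}$ is inherited by restriction of the topology.

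\textbf{Main obstacle.} The delicate point is the set-theoretic identification in Steps~1--2. Mazur's theorem gives equality of the weak and norm closures of convex sets. We must make sure that closedness relative to the ball $B$ and in the subspace topology matches closedness in $X$, so that the Vietoris topology from Lemma~\ref{lem:hyp_closed} is the one used. The $\tau$-closedness of $B$ handles this. Everything else is a direct assembly of earlier results.
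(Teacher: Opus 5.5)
Your proposal is correct and follows essentially the same route as the paper. Both arguments identify $\mathcal{P}_{\Cl,\B,\Conv}(X)$ with $\mathcal{P}_{\Cl_\tau,\B,\Conv}(X)$ via Corollary~\ref{cor:equal}, localize the ball with Lemma~\ref{lem:ball}, get compactness from Theorem~\ref{thm:comp_hyp}/Theorem~\ref{thm:hyperspace} together with the $V_\tau$-closedness from Lemma~\ref{lem:hyp_closed}, and get lower semicontinuity from Theorem~\ref{thm:dH_semicont}; your remark that $B$ is $\tau$-closed, so relative and absolute $\tau$-closedness agree, fills in a detail the paper leaves implicit.
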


\begin{proof}

By Corollary~\ref{cor:equal}, we have $\mathcal{P}_{\Cl, \B, \Conv}(X) = \mathcal{P}_{\Cl_{\tau}, \B, \Conv}(X)$. Let $\mathbb{B}$ be a closed ball in $\mathcal{P}_{\Cl, \B, \Conv}(X) = \mathcal{P}_{\Cl_{\tau}, \B, \Conv}(X)$. According to Lemma~\ref{lem:ball}, there is a closed ball $B\subset X$ such that $\mathbb{B}$ is a closed ball in $\mathcal{P}_{\Cl_{\tau}, \Conv}(B)$. Note that for $\mathbb{B}\subset \mathcal{P}_{\Cl_{\tau}, \Conv}(B)$ there exists a closed ball $\mathbb{B}'\subset \mathcal{P}_{\Cl_{\tau}}(B)$ such that $\mathbb{B} = \mathbb{B}'\cap \mathcal{P}_{\Cl_{\tau}, \Conv}(B)$. By Theorem~\ref{thm:hyperspace}, the hyperspace $\mathcal{P}_{\Cl_{\tau}}(B)$ is $V_{\tau}$-compactly equipped and so $\mathbb{B}'$ is $V_{\tau}$-compact.

From Corollary~\ref{cor:Haus_loc}, Theorem~\ref{thm:Ban_Ala} and Theorem~\ref{thm:comp_hyp}, it follows that $\mathcal{P}_{\Cl_{\tau}}(B)$ is a $V_{\tau}$-compact Hausdorff space. According to Lemma~\ref{lem:hyp_closed}, the space $\mathcal{P}_{\Cl_{\tau}, \Conv}(B)$ is $V_{\tau}$-closed in $\mathcal{P}_{\Cl_{\tau}}(B)$. Hence, $\mathbb{B}$ is $V_{\tau}$-compact in $\mathcal{P}_{\Cl_{\tau}}(B)$ and so in $\mathcal{P}_{\Cl_{\tau},\Conv}(B)$ and so in $\mathcal{P}_{\Cl_{\tau}, \B, \Conv}(X) = \mathcal{P}_{\Cl, \B, \Conv}(X)$.

According to Theorem~\ref{thm:dH_semicont}, the Hausdorff distance $d_H$ is lower semicontinuous on $\mathbb{B}\times \mathbb{B}\subset \mathcal{P}_{\Cl_{\tau}}(B)\times \mathcal{P}_{\Cl_{\tau}}(B)$ with respect to $V_{\tau}\times V_{\tau}$. Thus, $\mathcal{P}_{\Cl, \B, \Conv}(X)$ is a $V_{\tau}$-compactly equipped metric space.\qed

\end{proof}

Theorem~\ref{thm:conv_hyperspace} and Theorem~\ref{thm:Ambrosio} give us the following corollary.

\begin{corollary}\label{cor:Ambrosio_ref}
Let $X$ be a reflexive space. Then, for any finite set $\mathcal{A}\subset \mathcal{P}_{\Cl, \B, \Conv}(X)$ and for any connected graph $G = (V, E)$ with boundary $\mathcal{A}$, there exists a minimal parametric network of type $G$ joining $\mathcal{A}$.
\end{corollary}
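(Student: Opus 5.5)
This is a direct combination of Theorem~\ref{thm:conv_hyperspace} with Theorem~\ref{thm:Ambrosio}. First I would check that the hypotheses of Theorem~\ref{thm:Ambrosio} are met for the space $\mathcal{P}_{\Cl, \B, \Conv}(X)$ with the Hausdorff distance $d_H$. This space is nonempty. It is a metric, hence pseudometric, space: the elements are closed and bounded, so $d_H$ is finite, and it separates closed sets.

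By Theorem~\ref{thm:conv_hyperspace}, with $\tau$ the weak$^*$ topology on the reflexive space $X$, the hyperspace $\mathcal{P}_{\Cl, \B, \Conv}(X)$ is $V_{\tau}$-compactly equipped. I would make the passage to Definition~\ref{dfn:equipped} explicit. For each closed ball $\mathbb{B}$ of the hyperspace, take on $\mathbb{B}$ the topology induced by $V_{\tau}$. Then $\mathbb{B}$ is compact, and $d_H$ is lower semicontinuous on $\mathbb{B}\times\mathbb{B}$ in $V_{\tau}\times V_{\tau}$. Both properties were established in the proof of Theorem~\ref{thm:conv_hyperspace}, the second via Theorem~\ref{thm:dH_semicont} applied on the ambient ball $B\subset X$ provided by Lemma~\ref{lem:ball}. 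So the hyperspace is compactly equipped.

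Now fix a finite set $\mathcal{A}\subset \mathcal{P}_{\Cl, \B, \Conv}(X)$ and a connected graph $G=(V,E)$ with $\partial G=\mathcal{A}$. Theorem~\ref{thm:Ambrosio} applies verbatim to the compactly equipped space $\mathcal{P}_{\Cl, \B, \Conv}(X)$ and yields a network $\widetilde g\in[G,\mathcal{A}]$ with $|\widetilde g|=\mpn[G,\mathcal{A}]$. That is the required minimal parametric network.

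The only point needing care is that the compact topology used in Theorem~\ref{thm:Ambrosio} must live on the closed ball $B_{m+1}(a)$ of the hyperspace itself, centered at a boundary element $a$. Theorem~\ref{thm:conv_hyperspace} supplies such a topology for every closed ball centered at an element of the hyperspace, so no obstacle arises. The net limit $\widetilde g(v)$ remains in $\mathbb{B}$ because $\mathbb{B}$ is $V_{\tau}$-compact as a subspace. In particular, the limit is automatically convex, closed and bounded, because Lemma~\ref{lem:hyp_closed} was already used to obtain compactness inside $\mathcal{P}_{\Cl_{\tau},\Conv}(B)$, and Corollary~\ref{cor:equal} identifies this family with norm-closed convex sets.
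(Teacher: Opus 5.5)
Your proposal is correct and follows the paper's own argument: the corollary is obtained by combining Theorem~\ref{thm:conv_hyperspace} (the hyperspace $\mathcal{P}_{\Cl, \B, \Conv}(X)$ is $V_{\tau}$-compactly equipped, hence compactly equipped) with Theorem~\ref{thm:Ambrosio}. Your explicit passage from $V_{\tau}$-compactly equipped to Definition~\ref{dfn:equipped}, and your check that the limit network stays in the hyperspace, are accurate but only spell out what those two results already give.
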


\begin{corollary}\label{cor:Ambrosio_St_ref}
Let $X$ be a reflexive space. Then, for any finite set $\mathcal{A}\subset \mathcal{P}_{\Cl, \B, \Conv}(X)$, there exists a minimal Steiner tree joining $\mathcal{A}$.
\end{corollary}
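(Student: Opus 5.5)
The statement is Corollary~\ref{cor:Ambrosio_St_ref}. The plan is to combine the parametric existence result, Corollary~\ref{cor:Ambrosio_ref}, with the finiteness of relevant graph types from Remark~\ref{rk:finite_set}. Equivalently, one can apply Corollary~\ref{cor:Ambrosio} directly to the compactly equipped space $\mathcal{P}_{\Cl, \B, \Conv}(X)$ given by Theorem~\ref{thm:conv_hyperspace}. I would follow the second route but unpack it, because the definition of a minimal Steiner tree (Definition~\ref{dfn:mst}) adds a minimality condition on the number of interior vertices.

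First, fix a finite boundary $\mathcal{A}\subset \mathcal{P}_{\Cl, \B, \Conv}(X)$ with $\#\mathcal{A}=n$. By Theorem~\ref{thm:conv_hyperspace}, this hyperspace with $d_H$ is a compactly equipped metric space. Hence Theorem~\ref{thm:Ambrosio} applies to every connected graph $G$ with $\partial G=\mathcal{A}$, and $\mpn[G,\mathcal{A}]$ is attained for each such $G$.

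Next, show that $\smt(\mathcal{A})$ is attained. Remark~\ref{rk:finite_set} asserts the infimum over $G$ is a minimum. To avoid relying on Proposition~\ref{prop:int_vert}, whose hypothesis presupposes a minimal Steiner tree, I would argue directly. Take any network $g$ of type $G$ in which some interior vertex has degree at most $2$. Deleting that vertex, or suppressing it by joining its two neighbours, does not increase the length, by the triangle inequality. If a cycle is present, removing an edge keeps the graph connected and does not increase length. Repeating these reductions shows that every value $\mpn[G,\mathcal{A}]$ is bounded below by $\mpn[G',\mathcal{A}]$ for some tree $G'$ whose interior vertices have degree at least $3$. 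Such trees have at most $n-2$ interior vertices, so up to boundary-preserving isomorphism there are finitely many of them. Therefore $\smt(\mathcal{A})$ is the minimum of finitely many attained values, and some network $g$ of some type $G$ satisfies $|g|=\smt(\mathcal{A})$.

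Finally, among all pairs $(G,g)$ with $|g|=\smt(\mathcal{A})$, choose one minimizing $\#(V\setminus\mathcal{A})$. The set of achievable counts is a nonempty set of nonnegative integers, so a minimum exists. By Definition~\ref{dfn:mst}, this $g$ is a minimal Steiner tree joining $\mathcal{A}$.

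The only point needing care is the reduction step that bounds the set of types to finitely many. This is exactly the content of Remark~\ref{rk:finite_set}, which the paper already states, so I expect no genuine obstacle: the corollary follows immediately from Corollary~\ref{cor:Ambrosio} and Theorem~\ref{thm:conv_hyperspace}.
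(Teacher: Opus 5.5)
Your proposal is correct and follows the paper's route: Theorem~\ref{thm:conv_hyperspace} makes $\mathcal{P}_{\Cl, \B, \Conv}(X)$ compactly equipped, and Corollary~\ref{cor:Ambrosio} (Theorem~\ref{thm:Ambrosio} combined with the finiteness of types from Remark~\ref{rk:finite_set}) then yields the minimal Steiner tree. Your direct reduction to trees whose interior vertices have degree at least $3$ is a worthwhile improvement in rigor, because it establishes the finiteness step without invoking Proposition~\ref{prop:int_vert}, whose hypothesis already presupposes a minimal Steiner tree.
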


%
% ---- Bibliography ----
%

\end{document}